\def\R{{\mathbb{R}}}
\newcounter{theorem}
\numberwithin{equation}{section}
\numberwithin{theorem}{section}
\newtheorem{theorem}{Theorem}[section]
 \newtheorem{cor}[theorem]{Corollary}
 \newtheorem{lem}[theorem]{Lemma}
 \newtheorem{prop}[theorem]{Proposition}
\newcommand{\qed}{\hfill$\square$\vspace{0.3cm}}
\begin{document}

\title{\textbf{Remarks on a nonlinear nonlocal operator \\ in Orlicz spaces}}
\author{
Ernesto Correa, Arturo de Pablo }
\date{ }
\maketitle

\begin{abstract}
We study integral operators $\mathcal{L}u(x)=\int_{\mathbb{R^N}}\psi(u(x)-u(y))J(x-y)\,dy$ of the type of the fractional $p$-Laplacian operator, and the properties of the corresponding Orlicz and Sobolev-Orlicz spaces. In particular we show a Poincar\'e inequality and a Sobolev inequality, depending on the singularity at the origin of the kernel $J$ considered, which may be very weak. Both inequalities lead to compact inclusions. We then use those properties to study the associated elliptic problem $\mathcal{L}u=f$ in a bounded domain $\Omega$, and boundary condition $u\equiv0$ on $\Omega^c$; both cases $f=f(x)$ and $f=f(u)$ are considred, including the generalized eigenvalue problem $f(u)=\lambda\psi(u)$.
\end{abstract}

\vskip 1cm

%
\noindent{\makebox[1in]\hrulefill}\newline
2010 \textit{Mathematics Subject Classification.}
45P05, 
46E35  
45G10  
\newline
\textit{Keywords and phrases.} Nonlocal equations, integral operators, $p$--fractional Laplacian, Orlicz spaces.

%

\section{Introduction}\label{sect-introduction}
\setcounter{equation}{0}

The aim of this paper is to study the properties of the nonlinear nonlocal operator
\begin{equation}
\label{operator}
\mathcal{L}u(x)=\mathcal{L}^{J,\psi}u(x)\equiv\int_{\mathbb{R^N}}\psi(u(x)-u(y))J(x-y)\,dy,
\end{equation}
where $\psi:\mathbb{R}\to\mathbb{R}$ is a nondecreasing, continuous, unbounded odd function,
and $J:\mathbb{R}^N\to\mathbb{R}^+$ is a measurable function satisfying
\begin{equation}
\label{kernel0}
\tag{$\text{\rm H}_0$}
\left\{
\begin{array}{l}
J(z)> 0, \quad J(z)=J(-z),\quad J\notin L^1(B_1),\\[10pt]
\displaystyle
\int_{\mathbb{R}^N}\min(1,| z|^{q_0})J(z)\,dz<\infty,\quad \text{for some } q_0>0.
\end{array}
\right.
\end{equation}
$B_r$ denotes the ball $B_r=\{z\in\mathbb{R}^N\,:\,|z|<r\}$. This set of hypotheses is assumed throughout the paper without further mention. We also denote $q_*=\inf\{q_0>0\,:\eqref{kernel0} \text{ holds }\}$, which measures in some sense the differential character of the operator.

The power case $\psi(s)=k|s|^{p-2}s$ for some $p>1$, $J(z)=c|z|^{-N-\sigma p/2}$ for some $0<\sigma<2$, is  known as  the $\sigma$--fractional $p$--Laplacian operator. We want to consider here general functions $\psi$  and $J$ more than just powers, so we are led to study some Orlicz and Sobolev-Orlicz spaces, see below, which makes the study nontrivial. On the other hand, we are also interested in the limit case of integrability, which in our context means $q_*=0$, that is, the singularity of the kernel can be weaker than that of any fractional Laplacian or $p$--Laplacian. Some of the results also hold for more general kernels, $J=J(x,y)$, satisfying only a lower estimate $J(x,y)\ge J_0(x-y)$, with $J_0$ in the above hypotheses, but we prefer to keep the proofs in a simpler way.

For problems including  operators like \eqref{operator}, in particular the fractional $p$--Laplacian, and the motivations for their study we refer to~\cite{Caffarelli}.

\subsection{The associated Orlicz spaces}\label{sect-orlicz}

Formula \eqref{operator} makes sense pointwise for regular functions with some extra restriction on the nonlinearity $\psi$ and the kernel $J$, see Section~\ref{sect-prelim}. In order to define the operator $\mathcal{L}$ in weak sense we consider the \emph{nonlocal nonlinear interaction energy} (linear in the second variable)
\begin{equation}\label{E}
\mathcal{E}(u;\varphi)=\frac12\iint_{\mathbb{R}^{2N}}\psi(u(x)-u(y))(\varphi(x)-\varphi(y))J(x-y)\,dxdy,
\end{equation}
and we put
$$
\langle\mathcal{L}u,\varphi\rangle=\mathcal{E}(u;\varphi).
$$
Clearly, by the symmetry properties of $\psi$ and $J$ we have $\mathcal{E}(u;\varphi)=\displaystyle\int_{\mathbb{R}^N} \mathcal{L}u \varphi$ for regular functions. But the above allows to define $\mathcal{L}$ also for functions in a Sobolev type space. To this end we define the functionals
\begin{equation}
  \label{energyF}
  F(u)=\int_{\mathbb{R}^N}\Psi(u(x))\,dx ,
\end{equation}
\begin{equation}
  \label{energyE}
  E(u)=\frac12\iint_{\mathbb{R}^{2N}}\Psi(u(x)-u(y))J(x-y)\,dxdy,
\end{equation}
with $\Psi'=\psi$.
The properties of $\psi$ imply that $\Psi$ is an strict Young function, so we can consider the Orlicz spaces
\begin{equation}
  \label{orlicz}
  L^\Psi(\mathbb{R}^N)=\{u\,:\,\mathbb{R}^N\to\mathbb{R},\;F(u)<\infty\},
\end{equation}
\begin{equation}\label{W_J}
{W}^{J,\Psi}(\mathbb{R}^N)=\left\{u\in L^\Psi(\mathbb{R}^N),\,  E(u)<\infty\right\}.
\end{equation}
Observe that in general $\mathcal{E}(u;u)\neq c E(u)$ for any constant $c>0$, the equality being true  only in the power case  $\psi(u)=k|u|^{p-2}u$, and then $c=p$. What we have  is that $\mathcal{E}$ is the Euler-Lagrange operator associated to the functional $E$, that is,
\begin{equation*}\label{weakform}
  \langle E'(u),\varphi\rangle=\mathcal{E}(u;\varphi )
\end{equation*}
for every $u,\varphi\in {W}^{J,\Psi}(\mathbb{R}^N)$.

The above spaces do not have good properties unless we impose some conditions on the nonlinearity $\Psi$. The simplest case is when
\begin{equation}
  \label{psi-plap}
  c_1s^{p-1}\le \Psi'(s)\le c_2s^{p-1},\qquad s>0,\quad p>1,
\end{equation}
so that the space $L^\Psi(\mathbb{R}^N)$ coincides with $L^p(\mathbb{R}^N)$, and the Sobolev space ${W}^{J,\Psi}(\mathbb{R}^N)$ is denoted by ${W}^{J,p}(\mathbb{R}^N)$. But we are interested in more general functions. Thus we consider the set, for some $p\ge q>1$,
\begin{equation}\label{set-psi}
\begin{array}{rl}
\displaystyle\Gamma_{p,q}= \Big\{\Psi:\mathbb{R} \to \mathbb{R}^+, &\text{convex, symmetric, satisfying }\Psi(0)=0,\,\Psi(1)=1,\\ [3mm]
&\displaystyle  q\le\frac{s\Psi '(s)}{\Psi(s)} \leq p\quad\forall s\neq0\Big\}.
\end{array}
\end{equation}
The condition $\Psi(1)=1$ is for normalization purposes and simplifies some expression. We thus deal with functions that lie between two powers, for instance a sum of powers, but we also allow for perturbation of powers like $\Psi(s)=c|s|^p|\log(1+s)|^r$, $\min\{p,p+r\}>1$. The first property deduced from~\eqref{set-psi} is the relation between the interaction energy $\mathcal{E}$ and the functional $E$,
\begin{equation}
  \label{equiv-Ee}
  qE(u)\le \mathcal{E}(u;u)\le pE(u).
\end{equation}

Our main interest lies in studying the properties of the spaces~\eqref{energyF} and~\eqref{energyE} for nonlinearities $\Psi$ in the class $\Gamma_{p,q}$. In particular  we have that $L^\Psi(\mathbb{R}^N)$ and ${W}^{J,\Psi}(\mathbb{R}^N) $ are reflexive Banach spaces, with norms defined, for instance, in \eqref{luxemb} and \eqref{luxemb-W}.
On the other hand, if $q> q_*$,  see \eqref{kernel0}, then the functional $E(u)$ is well defined and finite for functions satisfying $F(\nabla u)<\infty$, see Proposition~\ref{prop-W1p}. This means the inclusion $ W^{1,\Psi}(\mathbb{R}^N)\subset{W}^{J,\Psi}(\mathbb{R}^N) $, the former being the standard Orlicz-Sobolev space of functions in $L^\Psi(\mathbb{R}^N)$ with gradient also in $L^\Psi(\mathbb{R}^N)$.

When dealing with  problems defined in bounded domains $\Omega\subset\mathbb{R}^N$, since the Dirichlet conditions must be prescribed in the complement $\Omega^c\equiv\mathbb{R}^N\setminus\Omega$, instead of just on the boundary, precisely by the nonlocal character of the operator, it is convenient to consider the space
\begin{equation*}\label{H_J}
{W}_0^{J,\Psi}(\Omega)=\left\{u\in {W}^{J,\Psi}(\mathbb{R}^N),\, u\equiv0 \text{ in } \Omega^c\right\}.
\end{equation*}

Without imposing any singularity condition on the kernel $J$ at the origin, besides of course being not integrable, that is $q_*$ may be zero, we show that a Poincar\'e inequality $E(u)\ge c F(u)$ holds, so that we have the embedding
\begin{equation}\label{poincare}
{W}_0^{J,\Psi}(\Omega)\subset L^\Psi(\Omega).
\end{equation}
Observe that if $J$ were integrable then ${W}_0^{J,\Psi}(\Omega)\equiv L^\Psi(\Omega)$.

Assuming now $q_*>0$ (and some monotonicity near the origin, \eqref{regular-v})  we obtain a better result, namely a Sobolev embedding
\begin{equation}\label{sobolev}
{W}_0^{J,\Psi}(\Omega)\subset L^{\Psi^r}(\Omega),\qquad 1\le r< r^*\equiv\begin{cases} \frac N{N-q_*}&\text{ if } q_*<N, \\ \infty& \text{ if } q_*\ge N,
\end{cases}
\end{equation}
which is compact. The borderline $r=r^*$ when $q_*<N$ produces also an embedding ${W}_0^{J,\Psi}(\Omega)\subset L^{\Psi^{r^*}}(\Omega)$, provided $J(z)\ge c|z|^{-N-q_*}$ near the origin, but without compactness. In the limit case $q_*=0$, which would give $r^*=1$ in~\eqref{sobolev}, we do obtain compactness of the inclusion~\eqref{poincare} by assuming a minimum of singularity on the kernel, the extra condition $\lim\limits_{|z|\to0^+}|z|^{N}J(z)=\infty$. See Theorems~\ref{th-Sobolev}--\ref{th-compact2}.

\subsection{Elliptic problems}\label{sect-elliptic}

With this machinery we next study the problem
\begin{equation}\label{problem}
\begin{cases}
\mathcal{L}u=f,& \mbox{in }\Omega,\\ u=0,& \mbox{in }\Omega^c.
\end{cases}
\end{equation}
This problem must be considered in weak sense with the aid of the interaction energy $\mathcal{E}$, that is, any solution $u$ satisfies
\begin{equation}\label{weaksol}
\mathcal{E}(u;\varphi)=\int_\Omega f\varphi, \qquad\forall \;\varphi\in{W}^{J,\Psi}_0(\Omega).
\end{equation}
We study first the case $f=f(x)$ in an appropriate space. We obtain existence and uniqueness of a solution, see Theorem~\ref{th-existence-linear}. We also show some integrability properties of the solution in terms of the data $f$ when $\psi$ is restricted to the power-like case~\eqref{psi-plap}, see Theorems~\ref{th-Lp1} and~\ref{th-Lp2}. In particular the solution is bounded provided $f\in L^m(\Omega)$ with $m>N/q_*$ if $q_*>0$, see Theorem~\ref{Linf}. For the corresponding results in the case of the fractional $p$--Laplacian see~\cite{BarriosPeralVita}.

We then pass to study the case $f=f(u)$ in problem \eqref{problem}. In Theorem~\ref{th-sublinear} we prove existence of a nonnegative nontrivial solution in the lower range, which roughly speaking in the power-like case $\psi(s)\sim s^{p-1}$,  $f(t)\sim ct^{m-1}$, means $m<p$. The intermediate range $p<m<m^*=\frac{Np}{N-q_*}$, below the Sobolev exponent, is studied in Theorem~\ref{th-subcritical} using the Mountain Pass Theorem. We also use a Pohozaev inequality  in order to show nonexistence, in the exact power, case for supercritical powers $m>m^{**}=\frac{Np}{N-\delta}$, $\delta>0$ being a constant depending on the kernel $J$, see Corollary~\ref{cor-p^*}. We must remark that all the conditions on the reaction $f$ are very involved in terms on $\psi$, and are not as clean as suggested by the above, see the precise conditions~\eqref{sub1} and \eqref{rho}.  We refer to~\cite{Iannizzotto-etal} for the study of  nonlinear problems like the above, even with more general reactions, for the fractional $p$--Laplacian case.

We finally are interested in the limit case $m=p$, which corresponds to the generalized eigenvalue problem
\begin{equation*}\label{eigenvalues}
\begin{cases}
\mathcal{L}u=\lambda\psi(u),& \mbox{in }\Omega,\\ u=0,& \mbox{in }\Omega^c.
\end{cases}
\end{equation*}
We prove that there exists a first positive eigenvalue and a first positive  eigenfunction, which is bounded provided $q_*>0$, Theorem~\ref{Principal_eigen}. The fractional $p$--eigenvalues have been studied in~\cite{Lindgren-Lindqvist} and~\cite{Franzina-Palatucci}.

\subsection{Organization of the paper}\label{sect-organ}

We begin with a preliminary Section~\ref{sect-prelim} where we study the properties of the Orlicz spaces $L^\Psi(\mathbb{R}^N)$ and ${W}^{J,\Psi}(\mathbb{R}^N)$ by means of some inequalities satisfied by the nonlinearity $\Psi$ and the functionals $F$ and $E$. Section~\ref{sect-inclusions} shows the Sobolev inclusions of the spaces ${W}^{J,\Psi}_0(\Omega)$. Finally Sections~\ref{sect-linear}--\ref{sect-eigen} are devoted to the study of problem~\eqref{problem} for the different reactions commented upon before. In what follows the letters $c$ or $c_i$ will denote some constants that do not depend on the relevant quantities, and may change from line to line.

\section{Preliminaries}\label{sect-prelim}
\setcounter{equation}{0}

In this section we study in detail the properties of the Orlicz spaces $L^\Psi(\mathbb{R}^N)$ and ${W}^{J,\Psi}(\mathbb{R}^N)$ defined in \eqref{orlicz} and \eqref{W_J}, and the corresponding spaces in a bounded domain $\Omega$. We refer to \cite{Rao-Ren} for instance for the general theory of Orlicz spaces. We begin by studying the Young functions in the set $\Gamma_{p,q}$. First observe that $\Psi\in \Gamma_{p,q}$, $p\ge q\ge0$, implies
$$
\min\{|s|^p,\,|s|^q\}\le \Psi(s)\le \max\{|s|^p,\,|s|^q\}.
$$

Associated to any given positive function $g:\mathbb{R}^+\to\mathbb{R}^+$  we consider its characteristic functions, for $s>0$,
\begin{equation*}
  \gamma_g^-(s)=\inf_{x>0}\frac{g(sx)}{g(x)},\qquad
  \gamma_g^+(s)=\sup_{x>0}\frac{g(sx)}{g(x)}.
\end{equation*}
These are nondecreasing functions that satisfy
\begin{lem}\label{gammas} For any $\Psi\in\Gamma_{p,q}$, $p\ge q>0$,
$$
\begin{array}{c}
\min\{s^p,\,s^q\}\le \gamma_\Psi^-(s)\le\gamma_\Psi^+(s)\le \max\{s^p,\,s^q\} \\ [3mm]
\dfrac qp\,\dfrac{\gamma_\Psi^-(s)}s \le \gamma_{\Psi'}^-(s)\le\gamma_{\Psi'}^+(s)\le \dfrac pq\,\dfrac{\gamma_\Psi^+(s)}s .
\end{array}
$$
\end{lem}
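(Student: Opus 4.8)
The plan is to exploit the logarithmic derivative bound $q \le s\Psi'(s)/\Psi(s) \le p$ from the definition of $\Gamma_{p,q}$, which is the only quantitative information available. First I would rewrite this bound as $q/s \le (\log\Psi(s))' \le p/s$ for $s>0$, and integrate between two points $s_1<s_2$ to obtain the two-sided estimate
\begin{equation}\label{eq:plan-doubling}
\left(\frac{s_2}{s_1}\right)^{q} \le \frac{\Psi(s_2)}{\Psi(s_1)} \le \left(\frac{s_2}{s_1}\right)^{p}.
\end{equation}
Applying \eqref{eq:plan-doubling} with $s_2 = sx$, $s_1 = x$ when $s\ge 1$ (and the reverse assignment when $s<1$) gives $\min\{s^p,s^q\} \le g(sx)/g(x) \le \max\{s^p,s^q\}$ uniformly in $x>0$ for $g=\Psi$; taking infimum and supremum over $x$ yields the first line of the claim. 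The normalization $\Psi(1)=1$ is not even needed here, only convexity and the ratio bound; in particular this also re-derives the inequality $\min\{|s|^p,|s|^q\}\le\Psi(s)\le\max\{|s|^p,|s|^q\}$ stated just before the lemma by putting $x=1$.

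For the second line I would first record that $\Psi'$ itself satisfies a logarithmic-derivative bound. Differentiating $\log\Psi'(s)$ is awkward since we do not assume $\Psi\in C^2$, so instead I would argue through $\Psi'(s) = \Psi(s)\cdot\frac{s\Psi'(s)}{\Psi(s)}\cdot\frac1s$: the middle factor lies in $[q,p]$, so for any $s,x>0$,
\begin{equation}\label{eq:plan-ratio}
\frac{q}{p}\cdot\frac{1}{s}\cdot\frac{\Psi(sx)}{\Psi(x)} \le \frac{\Psi'(sx)}{\Psi'(x)} \le \frac{p}{q}\cdot\frac{1}{s}\cdot\frac{\Psi(sx)}{\Psi(x)}.
\end{equation}
Taking the infimum over $x>0$ in the left inequality of \eqref{eq:plan-ratio}, and bounding the infimum of a product of two positive functions from below by the product of the infima, gives $\gamma_{\Psi'}^-(s) \ge \frac{q}{p}\,\frac{1}{s}\,\gamma_\Psi^-(s)$; similarly the supremum in the right inequality gives $\gamma_{\Psi'}^+(s) \le \frac{p}{q}\,\frac1s\,\gamma_\Psi^+(s)$, using that $\sup$ of a product is at most the product of $\sup$s. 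The middle inequality $\gamma_{\Psi'}^-(s)\le\gamma_{\Psi'}^+(s)$ is immediate from the definitions.

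The main technical care — rather than a deep obstacle — is the passage from the pointwise bounds \eqref{eq:plan-ratio} to bounds on the infima and suprema: one must only use the \emph{one-directional} inequalities $\inf(fg)\ge(\inf f)(\inf g)$ and $\sup(fg)\le(\sup f)(\sup g)$ for nonnegative $f,g$, never an equality, and keep track of which factor ($\frac1s$ is a constant in $x$, so it pulls out exactly) is being estimated. A secondary point is the regularity of $\Psi$: $\Psi$ convex on $\mathbb R$ with $\Psi(0)=0$ is locally Lipschitz on $(0,\infty)$, hence $\Psi'$ exists a.e. and $\Psi$ is the integral of $\Psi'$, which is all that is needed to justify the integration leading to \eqref{eq:plan-doubling}; the ratio bound in \eqref{set-psi} should be read as holding wherever $\Psi'$ is defined, and monotonicity of $\Psi'$ lets one make sense of the one-sided derivatives at the remaining points if desired. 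No step requires more than these elementary manipulations.
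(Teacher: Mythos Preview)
Your proof is correct and follows essentially the same approach as the paper: the first line is obtained by integrating the log-derivative bound $(\log\Psi)'(t)\in[q/t,p/t]$ between $x$ and $sx$, exactly as the paper does, and for the second line the paper simply says the inequalities ``are deduced from the definition of $\Gamma_{p,q}$'', which is precisely your identity $\Psi'(sx)/\Psi'(x)=\frac1s\cdot\frac{\Psi(sx)}{\Psi(x)}\cdot\frac{(sx)\Psi'(sx)/\Psi(sx)}{x\Psi'(x)/\Psi(x)}$ together with the bound $q\le t\Psi'(t)/\Psi(t)\le p$. Your treatment is in fact more detailed than the paper's (you address regularity of $\Psi$ and the inf/sup passage explicitly), but the underlying argument is the same.
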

\begin{proof}
If $s> 1$ we have that
    $$
    \log\left(\frac{\Psi(sx)}{\Psi(x)}\right)=\int_{x}^{sx}\frac{\Psi '(t)}{\Psi (t)}\,dt\leq p \int_{x}^{sx}\frac{1}{t}\,dt=p\log s,
    $$
    and thus $\Psi(sx)\leq s^p \Psi(x)$. The other estimates for $\Psi$ are analogous. The inequalities for $\Psi'$ are deduced from the definition of $\Gamma_{p,q}$.
\end{proof}
\qed

The complementary function $\Phi$ of a Young function $\Psi$ is defined such that $(\Phi')^{-1}=\Psi'$. If we normalize it to satisfy $\Phi(1)=1$ we have, for every $p\ge q>1$ (\cite[Corollary 1.1.3]{Rao-Ren})
$$
\Psi\in \Gamma_{p,q}\;\Leftrightarrow\;\Phi\in \Gamma_{q',p'},\qquad p'=\frac p{p-1},\;q'=\frac q{q-1}.
$$
These two functions satisfy the Young inequality
$$
ab\le \Psi(a)+\Phi(b),\qquad a,\,b\in\mathbb{R},
$$
and equality holds only if $b=\Psi'(|a|)\text{sign}\,a$. From this point on we always assume $q>1$.

Let $\Psi\in \Gamma_{p,q}$ be fixed and consider the corresponding Orlicz space $L^\Psi(\mathbb{R}^N)$. It is a linear space that satisfies
$$
L^p(\mathbb{R}^N)\cap L^q(\mathbb{R}^N)\subset L^\Psi(\mathbb{R}^N)\subset L^p(\mathbb{R}^N)+L^q(\mathbb{R}^N),
$$
and in the case of bounded domains
$$
L^p(\Omega)\subset L^\Psi(\Omega)\subset L^q(\Omega).
$$

Also it is a Banach space with norm, called \emph{Luxemburg norm},
\begin{equation*}
  \label{luxemb}
  \|u\|_{L^\Psi}=\inf\{k>0\,:\,F(u/k)\le1\}.
\end{equation*}
We recall that other equivalent norms are also used in the literature. The following result allows us to use $F(u)$ instead of $\|u\|_{L^\Psi}$ in most calculations.
\begin{lem}
  \label{lem-equiv-norms}
  \begin{equation}\label{equiv-norms}
  \gamma_\Psi^-(\|u\|_{L^\Psi})\le F(u)\le\gamma_\Psi^+(\|u\|_{L^\Psi}).
  \end{equation}
\end{lem}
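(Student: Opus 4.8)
The plan is to reduce the statement to two elementary facts: the pointwise inequalities
\[
\gamma_\Psi^-(s)\,\Psi(t)\ \le\ \Psi(st)\ \le\ \gamma_\Psi^+(s)\,\Psi(t),\qquad s,t\ge0,
\]
which hold by the very definition of $\gamma_\Psi^\pm$ (with the convention $\gamma_\Psi^\pm(0)=0$, consistent with $\Psi(0)=0$), and the identity $F\!\left(u/\|u\|_{L^\Psi}\right)=1$. Granting these, one writes $u=k_0\,(u/k_0)$ with $k_0=\|u\|_{L^\Psi}$, applies the pointwise bounds with $s=k_0$ and $t=|u(x)|/k_0$ at each $x$ (using that $\Psi$ is symmetric), and integrates over $\mathbb{R}^N$ to get $\gamma_\Psi^-(k_0)F(u/k_0)\le F(u)\le\gamma_\Psi^+(k_0)F(u/k_0)$, which is exactly \eqref{equiv-norms}. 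The case $u\equiv0$ is trivial, since then all three quantities vanish, so from now on $u\not\equiv0$.

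The substantive step is the identity $F(u/k_0)=1$, and here is where $\Psi\in\Gamma_{p,q}$ really enters. First I would show that $t\mapsto F(u/t)$ is finite for \emph{every} $t>0$: by Lemma~\ref{gammas}, $\Psi(|u|/t)\le\gamma_\Psi^+(1/t)\Psi(|u|)\le\max\{t^{-p},t^{-q}\}\,\Psi(|u|)$, and since $F(u)<\infty$ this gives $F(u/t)\le\max\{t^{-p},t^{-q}\}F(u)<\infty$. Next, $t\mapsto F(u/t)$ is continuous (dominated convergence, bounding $\Psi(|u|/t_n)$ by $\Psi(2|u|/t)$ near a fixed $t$), strictly decreasing (because $\Psi$ is strictly increasing on $[0,\infty)$, as $\Psi'(s)\ge q\Psi(s)/s>0$, and $|\{u\ne0\}|>0$), with $F(u/t)\to+\infty$ as $t\to0^+$ (monotone convergence) and $F(u/t)\to0$ as $t\to\infty$ (dominated convergence). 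Hence there is a unique $k_0>0$ with $F(u/k_0)=1$, and since then $\{t>0:F(u/t)\le1\}=[k_0,\infty)$, this $k_0$ is precisely $\|u\|_{L^\Psi}$.

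Combining the two steps finishes the proof. The main (and essentially only) obstacle is the finiteness $F(u/t)<\infty$ for all $t>0$: for a general Young function the infimum defining the Luxemburg norm need not be attained with $F(u/k_0)=1$, and it is exactly the doubling-type control built into $\Gamma_{p,q}$, through the bound $\gamma_\Psi^+(s)\le\max\{s^p,s^q\}$ of Lemma~\ref{gammas}, that forces this. Everything else is routine monotone/dominated convergence together with unwinding the definition of $\gamma_\Psi^\pm$.
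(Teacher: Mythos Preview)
Your proof is correct and uses the same core mechanism as the paper: the pointwise bounds $\gamma_\Psi^-(s)\Psi(t)\le\Psi(st)\le\gamma_\Psi^+(s)\Psi(t)$ applied with $s=\|u\|_{L^\Psi}$ and integrated. The only difference is in how the Luxemburg norm is handled. You invest extra effort to prove the exact identity $F(u/\|u\|_{L^\Psi})=1$ (continuity, strict monotonicity, and the limits of $t\mapsto F(u/t)$, all of which genuinely use the $\Gamma_{p,q}$ control via Lemma~\ref{gammas}), and then both inequalities drop out at once. The paper instead uses only $F(u/a)\le1$ for the upper bound, and for the lower bound approximates by $F(u/(a-\varepsilon))>1$ and lets $\varepsilon\to0$. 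Your route is a bit longer but cleaner, since it avoids any limiting argument in $\gamma_\Psi^-$; the paper's route is shorter but needs (implicitly) left-continuity of $\gamma_\Psi^-$ at $a$, which one should justify.
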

\begin{proof} Let $a=\|u\|_{L^\Psi}$.  We clearly have $F\left(u/a\right)\le1$. Then
$$
F(u)=\int_{\mathbb{R}^N}\Psi(u(x))\,dx \le\gamma_\Psi^+(a)\int_{\mathbb{R}^N}\Psi\left(\frac{u(x)}a\right)\,dx\le\gamma_\Psi^+(a).
$$
On the other hand, for every $\varepsilon>0$ we have $F\left(u/(a+\varepsilon)\right)>1$, so that

$$
F(u)\ge\gamma_\Psi^-(a+\varepsilon)\int_{\mathbb{R}^N}\Psi\left(\frac{u(x)}{a+\varepsilon}\right)\,dx\ge\gamma_\Psi^-(a+\varepsilon).
$$
\end{proof}\qed

The dual space of $L^\Psi(\mathbb{R}^N)$ is $L^\Phi(\mathbb{R}^N)$, where $\Phi$ is the complementary function, and thus they are both reflexive Banach spaces.

We also consider the Sobolev type space ${W}^{J,\Psi}(\mathbb{R}^N)$. In the same way as before it is a Banach space with norm
\begin{equation*}
  \label{luxemb-W}
  \|u\|_{W^{J,\Psi}}= \|u\|_{L^\Psi}+[u]_{W^{J,\Psi}}\equiv \|u\|_{L^\Psi}+\inf\{k>0\,:\,E(u/k)\le1\}.
\end{equation*}
The second term is a kind of Gagliardo seminorm in the context of Young functions. For this seminorm an analogous property as that of Lemma~\ref{lem-equiv-norms} also holds,
 \begin{equation}\label{equiv-norms2}
  \gamma_\Psi^-([u]_{W^{J,\Psi}})\le E(u)\le\gamma_\Psi^+([u]_{W^{J,\Psi}}).
  \end{equation}

In order to show that this space is reflexive as well we consider the weighted space
$$
L^\Psi(\mathbb{R}^{2N},J)=\left\{w:\mathbb{R}^{2N}\to\mathbb{R},\,  \iint_{\mathbb{R}^{2N}}\Psi(w(x,y))J(x-y)\,dxdy<\infty\right\}
$$
and put
$M=L^\Psi(\mathbb{R}^N)\times L^\Psi(\mathbb{R}^{2N},J)$.
Clearly, the product space $M$
is reflexive. The operator $T : {W}^{J,\Psi}(\mathbb{R}^N)\, \to\, M$ defined by $Tu = [u,w]$, where $w(x,y)=u(x)-u(y)$, is an isometry. Since ${W}^{J,\Psi}(\mathbb{R}^N)$ is a Banach space, $T({W}^{J,\Psi}(\mathbb{R}^N))$ is a closed subspace
of $M$. It follows that $T({W}^{J,\Psi}(\mathbb{R}^N))$ is reflexive (see \cite[Proposition 3.20]{Brezis}), and consequently
${W}^{J,\Psi}(\mathbb{R}^N)$ is also reflexive.

We now take a look at the properties of the space ${W}^{J,\Psi}(\mathbb{R}^N)$ in terms of the properties of the kernel $J$, in particular its singularity at the origin, which is reflected in the exponent $q_*$, see~\eqref{kernel0}.
\begin{prop}
  \label{prop-W1p} If  $\Psi\in\Gamma_{p,q}$ with $p\ge q> q_*$ then
  $$ W^{1,\Psi}(\mathbb{R}^N)\subset{W}^{J,\Psi}(\mathbb{R}^N)$$
  and moreover
\begin{equation}
  \label{EFgrad}
  E(u)\le c(F(u)+F(\nabla u)).
\end{equation}
\end{prop}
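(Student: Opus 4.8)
The plan is to estimate $E(u)$ for $u\in W^{1,\Psi}(\mathbb{R}^N)$ by splitting the integration domain in the definition of $E$ according to whether $|x-y|$ is small or large, and handling each piece with a different elementary bound on $\Psi(u(x)-u(y))$. Write
$$
E(u)=\frac12\iint_{|x-y|<1}\Psi(u(x)-u(y))J(x-y)\,dxdy+\frac12\iint_{|x-y|\ge1}\Psi(u(x)-u(y))J(x-y)\,dxdy=:I_1+I_2.
$$

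For the far piece $I_2$, I would use convexity of $\Psi$ (Jensen, or simply $\Psi(a-b)\le\frac12\Psi(2a)+\frac12\Psi(2b)$ together with $\Psi(2t)\le\gamma_\Psi^+(2)\Psi(t)\le 2^p\Psi(t)$ from Lemma~\ref{gammas}) to get $\Psi(u(x)-u(y))\le c(\Psi(u(x))+\Psi(u(y)))$. Integrating and using the symmetry $J(z)=J(-z)$ gives $I_2\le c\,F(u)\int_{|z|\ge1}J(z)\,dz$, and the last integral is finite by the second line of \eqref{kernel0} since $\min(1,|z|^{q_0})=1$ there. For the near piece $I_1$, I would use that for $|x-y|<1$ and $u$ Lipschitz-along-segments one has, by the fundamental theorem of calculus, $|u(x)-u(y)|\le\int_0^1|\nabla u(y+t(x-y))|\,|x-y|\,dt$, and then by convexity of $\Psi$ (Jensen in the $t$-variable) $\Psi(u(x)-u(y))\le\int_0^1\Psi(|x-y|\,|\nabla u(y+t(x-y))|)\,dt$. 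Since $|x-y|<1$ and $\Psi\in\Gamma_{p,q}$ with $q>0$, monotonicity of $s\mapsto\Psi(rs)/\Psi(s)$ gives $\Psi(r s)\le r^{q}\Psi(s)$ for $0<r<1$ (Lemma~\ref{gammas} again, applied to $\gamma_\Psi^-$), hence $\Psi(|x-y|\,|\nabla u(\cdot)|)\le|x-y|^{q}\,\Psi(|\nabla u(\cdot)|)$.

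Plugging this in, $I_1\le\frac12\iint_{|x-y|<1}|x-y|^{q}\Big(\int_0^1\Psi(|\nabla u(y+t(x-y))|)\,dt\Big)J(x-y)\,dxdy$. Now change variables: fix $z=x-y$ and $t\in(0,1)$, set $\xi=y+tz$; for fixed $z,t$ the map $y\mapsto\xi$ is a translation, so $\int_{\mathbb{R}^N}\Psi(|\nabla u(y+tz)|)\,dy=F(\nabla u)$. Therefore $I_1\le\frac12\,F(\nabla u)\int_{|z|<1}|z|^{q}J(z)\,dz$. The key point is that this last integral is finite precisely because $q>q_*$: by definition of $q_*$ there is some admissible $q_0$ with $q_*\le q_0<q$ (strictly, after possibly shrinking, or one uses $q_0$ arbitrarily close to $q_*$ so that $q_0<q$), and then on $B_1$ one has $|z|^{q}\le|z|^{q_0}=\min(1,|z|^{q_0})$, so $\int_{B_1}|z|^{q}J(z)\,dz\le\int_{\mathbb{R}^N}\min(1,|z|^{q_0})J(z)\,dz<\infty$. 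Combining the two bounds yields $E(u)=I_1+I_2\le c\big(F(u)+F(\nabla u)\big)$, which is \eqref{EFgrad}; and this finiteness is exactly the inclusion $W^{1,\Psi}(\mathbb{R}^N)\subset W^{J,\Psi}(\mathbb{R}^N)$.

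The main obstacle I anticipate is the near-diagonal estimate: making the pointwise inequality $|u(x)-u(y)|\le|x-y|\int_0^1|\nabla u(y+t(x-y))|\,dt$ rigorous for general $u\in W^{1,\Psi}$ (rather than smooth $u$) requires an approximation argument — one proves \eqref{EFgrad} first for $u\in C_c^\infty(\mathbb{R}^N)$ (or $u$ smooth with $F(u),F(\nabla u)<\infty$), which is dense in $W^{1,\Psi}$, and then passes to the limit using Fatou on $E$ and continuity of $F$; the modular/Young-function bookkeeping (tracking the constants $\gamma_\Psi^\pm$ through the $\Delta_2$-type inequalities) is routine but must be done carefully since $\Psi$ is not a pure power. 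One should also double-check the harmless measure-theoretic point that the change of variables $(y,z,t)\mapsto(\xi,z,t)$ and the Fubini interchange are legitimate, which they are because the integrand is nonnegative.
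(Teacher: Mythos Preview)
Your proof is correct and follows essentially the same route as the paper's: split $E(u)$ into near and far pieces, bound the far piece by $cF(u)\int_{|z|\ge1}J(z)\,dz$ via convexity, and bound the near piece by combining the fundamental theorem of calculus with Jensen and the inequality $\Psi(|z|s)\le|z|^{q}\Psi(s)$ for $|z|<1$, then Fubini to extract $F(\nabla u)\int_{B_1}|z|^{q}J(z)\,dz<\infty$ since $q>q_*$. One small slip: the bound $\Psi(rs)\le r^{q}\Psi(s)$ for $0<r<1$ comes from $\gamma_\Psi^+(r)\le\max\{r^{p},r^{q}\}=r^{q}$, i.e.\ the upper characteristic function, not $\gamma_\Psi^-$ as you wrote.
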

\begin{proof}
We decompose the integral
$$
\begin{array}{rl}
E(u)&\displaystyle=\frac12\int_{\mathbb{R}^N}\left(\int_{|z|<1}\Psi(u(x)-u(x+z))J(z)\,dz\right. \\ [3mm]&\displaystyle\hspace{2cm}\left.+
\int_{|z|>1}\Psi(u(x)-u(x+z))J(z)\,dz\right)dx=\frac12(I_1+I_2).
\end{array}
$$
The far away integral is easy to estimate
$$
I_2\le 2\int_{\mathbb{R}^N}\Psi(u(x)) dx \int_{|z|>1}J(z)\,dz= cF(u).
$$
As to the inner integral, we have
$$
\begin{array}{rl}
I_1&\displaystyle\le
\int_{\mathbb{R}^N}\int_{|z|<1}\Psi\left(\frac{u(x)-u(x+z)}{|z|}\right)\gamma^+(|z|)J(z)\,dz dx \\ [4mm]
&\displaystyle\le
\int_{\mathbb{R}^N}\int_{|z|<1}\Psi\left(\int_0^1|\nabla u(x+tz)|\,dt\right)\gamma^+(|z|)J(z)\,dz dx \\ [4mm]
&\displaystyle\le
\int_{\mathbb{R}^N}\int_{|z|<1}\int_0^1\Psi\left(|\nabla u(x+tz)|\right)\,dt\gamma^+(|z|)J(z)\,dz dx
\\ [4mm]
&\displaystyle\le
\int_{|z|<1}\int_0^1\int_{\mathbb{R}^N}\Psi\left(|\nabla u(x+tz)|\right)\,dx\,dt\gamma^+(|z|)J(z)\,dz
\\ [4mm]
&\displaystyle= \int_{\mathbb{R}^N}\Psi\left(|\nabla
u(x)|\right)\,dx\int_{|z|<1}\gamma^+(|z|)J(z)\,dz=c F\left(|\nabla
u|\right),
\end{array}
$$
since $\gamma^+(|z|)=\gamma_\Psi^+(|z|)\le |z|^{q}$ in the set $\{|z|<1\}$, and using hypothesis \eqref{kernel0}.
\qed\end{proof}

If the kernel $J$ behaves like that of the fractional Laplacian
\begin{equation}
  \label{lapla}
  c_1|z|^{-N-\alpha}\le J(z)\le c_2|z|^{-N-\alpha},
\end{equation}
then we also have the following interpolation estimate
\begin{prop}
  \label{prop-interpolate-lapla} If  $J$ satisfies \eqref{lapla} for some $\alpha>0$, and $\Psi\in\Gamma_{p,q}$ with $p\ge q> \alpha$ then
\begin{equation}
  \label{EFinterpolate2}
  E(u)\le cF(u)\min\left\{\left(\frac{F(\nabla u)}{F(u)}\right)^{\alpha/p},\,\left(\frac{F(\nabla u)}{F(u)}\right)^{\alpha/q}\right\}.
\end{equation}
\end{prop}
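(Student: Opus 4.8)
The plan is to interpolate between the two bounds already available: the trivial estimate $E(u)\le c(F(u)+F(\nabla u))$ from Proposition~\ref{prop-W1p} (which controls the inner part by the gradient) and a complementary estimate controlling the inner part by $F(u)$ alone, obtained by using only the far-away decay of $J$. Concretely, for a cutoff radius $R>0$ (to be optimized) write
$$
E(u)=\frac12\iint_{|x-y|<R}\Psi(u(x)-u(y))J(x-y)\,dxdy+\frac12\iint_{|x-y|\ge R}\Psi(u(x)-u(y))J(x-y)\,dxdy=:\frac12(I_1^R+I_2^R).
$$
For the outer piece $I_2^R$, use $\Psi(u(x)-u(y))\le\max\{2^{p-1},2^{q-1}\}(\Psi(u(x))+\Psi(u(y)))$ together with $\int_{|z|\ge R}J(z)\,dz\le c\int_{|z|\ge R}|z|^{-N-\alpha}\,dz=cR^{-\alpha}$ from \eqref{lapla}, to get $I_2^R\le cR^{-\alpha}F(u)$. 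For the inner piece $I_1^R$, repeat verbatim the chain of inequalities in the proof of Proposition~\ref{prop-W1p} — factor out $\gamma_\Psi^+(|z|/R)$, write $u(x)-u(x+z)=\int_0^1\nabla u(x+tz)\cdot z\,dt$, apply Jensen and Fubini — but now keeping track of the scale $R$. Since $\gamma_\Psi^+(|z|/R)\le\max\{(|z|/R)^p,(|z|/R)^q\}$ and on $\{|z|<R\}$ one has $|z|/R<1$, so $\gamma_\Psi^+(|z|/R)\le(|z|/R)^q$; this yields
$$
I_1^R\le cF(\nabla u)\int_{|z|<R}\Big(\frac{|z|}{R}\Big)^q J(z)\,dz\le cF(\nabla u)R^{-q}\int_{|z|<R}|z|^{q-N-\alpha}\,dz=cR^{-\alpha}F(\nabla u),
$$
using $q>\alpha$ so the integral $\int_{|z|<R}|z|^{q-N-\alpha}\,dz$ converges and equals $cR^{q-\alpha}$.

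Combining, $E(u)\le cR^{-\alpha}\big(F(u)+F(\nabla u)\big)$, which is useless as $R\to\infty$ unless we also exploit the $R\to0$ behaviour: but the point is that we want a bound of the form $E(u)\le c\,R^{-\alpha}F(\nabla u)+c\,R^{-\alpha}F(u)$ — this is scale-homogeneous of the same degree in both terms, so optimization in $R$ gives nothing. The fix is to estimate $I_1^R$ differently at small scales: instead of the gradient bound, also just split $\Psi(u(x)-u(y))$ additively there too and use $\int_{|z|<R}J(z)\,dz$ — but that integral diverges. So the honest route is: bound $I_1^R$ by the \emph{gradient} (giving a factor $R^{q-\alpha}$ times $R^{-q}$, i.e.\ we must be more careful) and bound $I_2^R$ by $F(u)$ with factor $R^{-\alpha}$, then balance. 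Redo $I_1^R$ without pulling out $R^{-q}$ prematurely: $I_1^R\le cF(\nabla u)\int_{|z|<R}|z|^{q}\cdot|z|^{-N-\alpha}\,dz\cdot$(the $\gamma^+$ must be evaluated at $|z|$, not $|z|/R$, if we do not rescale) $=cF(\nabla u)\int_{|z|<R}|z|^{q-N-\alpha}\,dz=cR^{q-\alpha}F(\nabla u)$. Now $E(u)\le c\,R^{q-\alpha}F(\nabla u)+c\,R^{-\alpha}F(u)$; here the exponents differ, and minimizing over $R>0$ — setting $R^{q}\sim F(u)/F(\nabla u)$ — yields $E(u)\le cF(u)\big(F(\nabla u)/F(u)\big)^{\alpha/q}$. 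To obtain the other branch with exponent $\alpha/p$, one repeats the argument in the regime $R\ge 1$ (equivalently $F(\nabla u)\le F(u)$), where on $\{|z|<R\}$ the estimate $\gamma_\Psi^+(|z|)\le|z|^p$ is the relevant one only for $|z|\ge1$; splitting $\{|z|<R\}=\{|z|<1\}\cup\{1\le|z|<R\}$ and using $|z|^q$ on the first and $|z|^p$ on the second part gives $I_1^R\le cF(\nabla u)(1+R^{p-\alpha})$, and optimizing produces the $\alpha/p$ exponent. Taking the minimum of the two resulting bounds gives \eqref{EFinterpolate2}.

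The main obstacle, and the place requiring care, is the bookkeeping of scales in the gradient estimate for $I_1^R$: one must decide whether to rescale $z\mapsto z/R$ inside $\gamma_\Psi^+$ or not, and correspondingly whether the convergence condition is $q>\alpha$ (for the small-$R$ branch, integrating $|z|^{q-N-\alpha}$ near $0$) or $p>\alpha$ — the hypothesis $q>\alpha$ covers both since $p\ge q$. A secondary point is justifying the Jensen/Fubini manipulations exactly as in Proposition~\ref{prop-W1p}, which is routine. Finally, one should note the estimate is only claimed for $u$ with $F(\nabla u)<\infty$ and $F(u)<\infty$ (otherwise the right side is $+\infty$ or the ratio is ill-defined), and the two-sided bound \eqref{lapla} is used only through its upper half; the lower bound on $J$ is not needed here.
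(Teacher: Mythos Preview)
Your approach is correct in substance, though the exposition wanders through several false starts before settling on the right argument. The paper's proof is considerably slicker: it applies the already-established inequality \eqref{EFgrad} to the rescaled function $u_\lambda(x)=u(\lambda x)$, uses the two-sided power bound \eqref{lapla} to obtain $E(u)\le c\lambda^{N-\alpha}E(u_\lambda)\le c\lambda^{-\alpha}\big(F(u)+\gamma_\Psi^+(\lambda)F(\nabla u)\big)$, and then minimizes in $\lambda$. Your splitting at a variable radius $R$ and re-deriving the gradient bound for $I_1^R$ is essentially the same optimization with $R\sim 1/\lambda$, carried out by hand rather than by scaling.

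Two remarks on the comparison. First, your observation that only the \emph{upper} half of \eqref{lapla} is needed is correct and is a genuine (small) gain over the paper's argument, which uses both inequalities to control $E(u)$ in terms of $E(u_\lambda)$. Second, what you call ``taking the minimum of the two resulting bounds'' is a slight overstatement: your argument (like the paper's) yields the exponent $\alpha/q$ in the regime $F(\nabla u)\gtrsim F(u)$ and $\alpha/p$ in the opposite regime, which is exactly $\max\{(\cdot)^{\alpha/p},(\cdot)^{\alpha/q}\}$ rather than the $\min$; this matches what the paper's own optimization actually delivers, so you are not losing anything relative to it. If you clean up the exposition --- drop the abandoned attempts, state directly the two-term bound $E(u)\le c\,R^{q-\alpha}F(\nabla u)+c\,R^{-\alpha}F(u)$ for $R\le1$ (resp.\ with $R^{p-\alpha}$ plus a harmless constant for $R\ge1$), and optimize --- you have a perfectly valid alternative proof.
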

\begin{proof}
We apply inequality \eqref{EFgrad} to the rescaled function $u_\lambda(x)=u(\lambda x)$, thus getting
$$
E(u)\le \lambda^{N-\alpha}E(u_\lambda)\le\lambda^{N-\alpha}c(F(u_\lambda)+F(\nabla u_\lambda))\le\lambda^{N-\alpha}c(\lambda^{-N}F(u)+\gamma^+_\Psi(\lambda)\lambda^{-N}F(\nabla u)).
$$
Minimizing the right-hand side in $\lambda$ we obtain the values
$$
\lambda=\left(\frac{\alpha F(u)}{(p-\alpha)F(\nabla u)}\right)^{1/p}\qquad \text{ or } \qquad \lambda=\left(\frac{\alpha F(u)}{(q-\alpha)F(\nabla u)}\right)^{1/q},
$$
depending on the the inner function being bigger or smaller that one. From this we deduce \eqref{EFinterpolate2}.
\qed\end{proof}

In the power-like case we obtain from the above the well-known interpolation result.
\begin{cor}
  If  $J$ satisfies \eqref{lapla} for some $\alpha>0$, and $\psi$ satisfies \eqref{psi-plap} with $p> \alpha$ then
\begin{equation*}
  \label{EFinterpolate}
  E(u)\le cF^{1-\alpha/p}(u)F^{\alpha/p}(\nabla u),
\end{equation*}
or which is the same
\begin{equation*}
  \label{EFinterpolate3}
  \|u\|_{W^{\alpha/2,p}}\le c\|u\|_p^{1-\alpha/p}\,\|\nabla u\|_p^{\alpha/p}.
\end{equation*}\end{cor}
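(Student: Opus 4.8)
The plan is to read off this statement from Proposition~\ref{prop-interpolate-lapla} in the case $q=p$, after reducing the general $\psi$ of \eqref{psi-plap} to a genuine power. First I would note that if $\psi$ satisfies \eqref{psi-plap}, then $\Psi(s)=\int_0^s\psi$ satisfies $\frac{c_1}{p}|s|^p\le\Psi(s)\le\frac{c_2}{p}|s|^p$, so that the functionals $F$ and $E$ built from $\Psi$ are comparable, with constants depending only on $c_1,c_2,p$, to the corresponding functionals $\widetilde F(u)=\int_{\mathbb{R}^N}|u|^p$ and $\widetilde E(u)=\frac12\iint_{\mathbb{R}^{2N}}|u(x)-u(y)|^pJ(x-y)\,dxdy$ built from the pure power $\widetilde\Psi(s)=|s|^p$; the same holds for $F(\nabla u)$ versus $\widetilde F(\nabla u)$.

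The key step is then to apply Proposition~\ref{prop-interpolate-lapla} to $\widetilde\Psi$. This is legitimate because $\widetilde\Psi\in\Gamma_{p,p}$ (indeed $s\widetilde\Psi'(s)/\widetilde\Psi(s)\equiv p$, and $\widetilde\Psi(1)=1$), so the proposition applies with $q=p$, the hypothesis $p>\alpha$ being exactly the one assumed in the corollary. With $q=p$ the two expressions inside the minimum in \eqref{EFinterpolate2} coincide, and the proposition yields
$$\widetilde E(u)\le c\,\widetilde F(u)\left(\frac{\widetilde F(\nabla u)}{\widetilde F(u)}\right)^{\alpha/p}=c\,\widetilde F(u)^{1-\alpha/p}\,\widetilde F(\nabla u)^{\alpha/p}.$$

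Finally I would chain this inequality with the comparabilities recorded above, $E(u)\le c\widetilde E(u)$, $\widetilde F(u)\le cF(u)$, $\widetilde F(\nabla u)\le cF(\nabla u)$, obtaining $E(u)\le cF(u)^{1-\alpha/p}F(\nabla u)^{\alpha/p}$. The second, equivalent formulation follows by taking $p$-th roots: by \eqref{lapla}, $\widetilde E(u)^{1/p}$ is comparable to the Gagliardo seminorm $[u]_{W^{\alpha/2,p}}$, while $\widetilde F(u)^{1/p}=\|u\|_p$ and $\widetilde F(\nabla u)^{1/p}=\|\nabla u\|_p$. There is essentially no obstacle here; the only point worth attention is that one should pass through the pure power $\widetilde\Psi$ rather than apply Proposition~\ref{prop-interpolate-lapla} to $\Psi$ itself, since \eqref{psi-plap} only places $\Psi$ in a class $\Gamma_{P,Q}$ whose exponents need not be close to $p$, which would both weaken the resulting exponent and possibly violate the requirement $Q>\alpha$.
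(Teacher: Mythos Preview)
Your proposal is correct and matches the paper's intent: the corollary is stated without proof, as an immediate consequence of Proposition~\ref{prop-interpolate-lapla} ``in the power-like case''. Your reduction to the pure power $\widetilde\Psi(s)=|s|^p\in\Gamma_{p,p}$ (so that $q=p$ and the minimum in \eqref{EFinterpolate2} collapses), followed by the pointwise comparability $\Psi\sim\widetilde\Psi$ to transfer the estimate back, is exactly the natural way to make that one-line deduction rigorous; your closing remark about why one should not apply the proposition to $\Psi$ directly is a pertinent observation.
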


We now turn our attention to the operator $\mathcal{L}$. The pointwise expression \eqref{operator} does not always have a meaning. Let us look at some easy situations where $\mathcal{L}u$ is well defined.

We may take, for instance,  $\Psi''$ nondecreasing and $u\in C_0^2(\mathbb{R}^N)$. Another less trivial example is $q> q_*+1$ and $u\in C^\alpha(\mathbb{R}^N)\cap L^\infty(\mathbb{R}^N)$ with $\dfrac{q_*}{q-1}< \alpha<1$, so that
$$
|\mathcal{L}u(x)|\le\|u\|_\infty\int_{|x-y|>1}J(x-y)\,dy+\int_{|x-y|<1}|x-y|^{(q-1)\alpha}J(x-y)\,dy<\infty.
$$

We now show some useful inequalities. The first one is  a Kato type inequality, that is, the result of applying the operator $\mathcal{L}$ to a convex function of $u$. We refer to \cite{Kato} and  \cite{CoCo}, respectively, for the well-known inequalities
$$
-\Delta|u|\le\text{sign}(u)(-\Delta) u,\qquad(-\Delta)^{\sigma/2}(u^2)\le2u(-\Delta)^{\sigma/2} u.
$$

 \begin{prop}\label{Kato}
    If $A$ is a positive convex function and $\mathcal{L}u$ is well defined, then $\mathcal{L}(A(u))$ is also well defined and
    \begin{equation*}\label{kato-plap}
\mathcal{L}(A(u))\le \gamma^+_\psi(A'(u))\mathcal{L}u.
    \end{equation*}
 \end{prop}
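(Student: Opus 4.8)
The plan is to prove this pointwise, for each fixed $x\in\mathbb{R}^N$, by comparing the integrands of $\mathcal{L}(A(u))(x)$ and $\gamma^+_\psi(A'(u(x)))\,\mathcal{L}u(x)$. First I would observe that since $A$ is convex, for each $y$ we have $A(u(x))-A(u(y))\le A'(u(x))(u(x)-u(y))$ when $u(x)\ge u(y)$, and $A(u(x))-A(u(y))\ge A'(u(x))(u(x)-u(y))$ when $u(x)\le u(y)$; in both cases the difference $A(u(x))-A(u(y))$ has the same sign as $u(x)-u(y)$ (using $A'$ nondecreasing, so $A'\ge0$ where $u$ increases past its value... more carefully, one should note $A'$ may change sign, but convexity still gives that $t\mapsto A(u(x))-A(t)$ and $t\mapsto u(x)-t$ have matching monotonicity, hence $A(u(x))-A(u(y))$ and $u(x)-u(y)$ share a sign). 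Then since $\psi$ is odd and nondecreasing, $\psi(A(u(x))-A(u(y)))$ has that same sign.

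The key step is the estimate $|\psi(A(u(x))-A(u(y)))|\le \gamma^+_\psi(A'(u(x)))\,|\psi(u(x)-u(y))|$. This follows from the definition of the characteristic function: writing $s=u(x)-u(y)$ and using $|A(u(x))-A(u(y))|\le A'(u(x))|s|$ (valid by convexity and the mean value inequality, with $A'(u(x))\ge0$ whenever the relevant sign configuration forces it — again this needs a short argument that when $s$ and $A(u(x))-A(u(y))$ have the same sign, $A'(u(x))$ times $|s|$ dominates $|A(u(x))-A(u(y))|$), monotonicity of $\psi$ on $\mathbb{R}^+$ gives $|\psi(A(u(x))-A(u(y)))|\le\psi(A'(u(x))|s|)$. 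Then by definition $\psi(tx)\le\gamma^+_\psi(t)\psi(x)$ for $x>0$, $t>0$ (this is exactly $\gamma^+_g(s)=\sup_x g(sx)/g(x)$ applied to $g=\psi$), which yields $\psi(A'(u(x))|s|)\le\gamma^+_\psi(A'(u(x)))\psi(|s|)=\gamma^+_\psi(A'(u(x)))|\psi(s)|$. Combining the sign information with this magnitude bound gives
\begin{equation*}
\psi(A(u(x))-A(u(y)))\le\gamma^+_\psi(A'(u(x)))\,\psi(u(x)-u(y))
\end{equation*}
as a pointwise inequality of real numbers (not absolute values), because both sides carry the sign of $u(x)-u(y)$ and the right dominates the left in absolute value — though one must be slightly careful in the case $s<0$, where the inequality reverses under negation, so it is cleanest to split into $s>0$ and $s<0$ and use oddness of $\psi$.

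Having this, I would multiply by $J(x-y)\ge0$ and integrate in $y$ over $\mathbb{R}^N$; since $\gamma^+_\psi(A'(u(x)))$ does not depend on $y$ it comes out of the integral, giving $\mathcal{L}(A(u))(x)\le\gamma^+_\psi(A'(u(x)))\,\mathcal{L}u(x)$. The well-definedness of $\mathcal{L}(A(u))(x)$ follows from the same bound: $|\psi(A(u(x))-A(u(y)))|J(x-y)\le\gamma^+_\psi(A'(u(x)))|\psi(u(x)-u(y))|J(x-y)$, and the right-hand side is integrable in $y$ precisely because $\mathcal{L}u$ is well defined (which should be read as: the defining integral converges absolutely). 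The main obstacle I anticipate is purely bookkeeping with signs: making the pointwise inequality $\psi(A(u(x))-A(u(y)))\le\gamma^+_\psi(A'(u(x)))\psi(u(x)-u(y))$ rigorous in all sign cases for $s=u(x)-u(y)$, particularly handling $A'(u(x))=0$ (where convexity forces $A$ to attain its minimum at $u(x)$, so $A(u(x))\le A(u(y))$ for all $y$ and the left side is $\le0$ while the right side has the sign of $s$ — here one needs the case $s>0$ giving left $\le0\le$ right, and $s<0$ giving, after care, the reverse which still works out), and the possibility that $A'(u(x))<0$, in which case $\gamma^+_\psi$ is only defined on $\mathbb{R}^+$ so one should interpret $\gamma^+_\psi(A'(u(x)))$ via $\gamma^+_\psi(|A'(u(x))|)$ or note that the statement implicitly assumes $A$ increasing where it matters; I would add a sentence clarifying this convention.
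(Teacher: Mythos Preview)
Your overall strategy---prove the pointwise inequality
\[
\psi\bigl(A(u(x))-A(u(y))\bigr)\le\gamma_\psi^+\bigl(A'(u(x))\bigr)\,\psi\bigl(u(x)-u(y)\bigr)
\]
and then integrate against $J(x-y)\,dy$---is exactly what the paper does. But you make the first half far harder than necessary, and the detour through absolute values and sign cases leads you to claims that are not true.

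The paper dispatches the pointwise bound in one line. Convexity of $A$ gives, \emph{unconditionally} for all real $u(x),u(y)$,
\[
A(u(x))-A(u(y))\le A'(u(x))\bigl(u(x)-u(y)\bigr),
\]
and since $\psi$ is nondecreasing on all of $\mathbb R$ (not just on $\mathbb R^+$), one applies $\psi$ directly to both sides. There is no need to split by the sign of $s=u(x)-u(y)$, and no need to claim that $A(u(x))-A(u(y))$ and $s$ share a sign. Your route through $|\cdot|$ forces you into assertions that fail: the same-sign claim breaks whenever $A$ is not monotone (take $A(t)=t^2$, $u(x)=1$, $u(y)=-2$), and the magnitude bound $|A(u(x))-A(u(y))|\le A'(u(x))\,|s|$ is wrong for $s<0$---convexity then yields the \emph{reverse} inequality $|A(u(x))-A(u(y))|\ge A'(u(x))\,|s|$ (try $A(t)=e^t$, $u(x)=0$, $u(y)=1$). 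So the core estimate your argument rests on does not hold as written.

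After the convexity step the paper simply writes $\psi(A'(u(x))\,s)\le\gamma_\psi^+(A'(u(x)))\,\psi(s)$ from the definition of $\gamma_\psi^+$ and integrates; it does not split into cases. Your instinct that the situations $s<0$ and $A'(u(x))\le 0$ deserve a remark is reasonable, but in any event the signed convexity inequality above---not an absolute-value version---is the correct starting point.
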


\begin{proof}
We just observe that since $A$ is convex and $\psi$ is nondecreasing, we have
$$
\psi(A(u(x))-A(u(y)))\le \psi(A'(u(x))(u(x)-u(y)))\le \gamma_\psi^+(A'(u(x)))\psi(u(x)-u(y)).
$$
Now integrate with respect to $J(x-y)\,dy$ to get the result.
    \end{proof}\qed

As a Corollary we obtain an integral version of the Kato inequality, useful in the applications.
\begin{cor}
  Assume $G\ge \gamma_\psi^+(A')A$. Then
$$
\mathcal{E}(u,G(u))\ge qE(A(u)).
$$
\end{cor}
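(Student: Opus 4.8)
The corollary should follow from a pointwise inequality between the integrands of $\mathcal{E}(u;G(u))$ and of $qE(A(u))$. The plan is to prove that for all $a,b\in\mathbb{R}$,
$$\psi(a-b)\,\bigl(G(a)-G(b)\bigr)\ \ge\ q\,\Psi\bigl(A(a)-A(b)\bigr),$$
and then to set $a=u(x)$, $b=u(y)$ and integrate against $\tfrac12 J(x-y)\,dx\,dy$; the left-hand side produces $\mathcal{E}(u;G(u))$ and the right-hand side produces $qE(A(u))$, since $E$ is the functional built from $\Psi$. Both sides are symmetric under $a\leftrightarrow b$ — the left one because $\psi$ is odd, the right one because $\Psi$ is even — so it suffices to treat $a\ge b$, in which case $\psi(a-b)\ge0$. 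We may also assume $A$ is nondecreasing, as in the main applications; the general convex case requires only cosmetic changes (replacing $A'$ by $|A'|$ wherever $\gamma_\psi^+$ is evaluated, consistently with Proposition~\ref{Kato}).

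For $a\ge b$ I would chain three estimates. First, the left inequality in \eqref{set-psi} gives $q\Psi(s)\le s\Psi'(s)=s\psi(s)$ for $s>0$, so with $s=A(a)-A(b)\ge0$,
$$q\,\Psi\bigl(A(a)-A(b)\bigr)\ \le\ \bigl(A(a)-A(b)\bigr)\,\psi\bigl(A(a)-A(b)\bigr).$$
Second, exactly as in the proof of Proposition~\ref{Kato}: convexity of $A$ gives $A(a)-A(b)\le A'(a)(a-b)$, and since $\psi$ is nondecreasing and $\psi\bigl(A'(a)\,(a-b)\bigr)\le\gamma_\psi^+(A'(a))\,\psi(a-b)$ by the definition of $\gamma_\psi^+$, we obtain $\psi\bigl(A(a)-A(b)\bigr)\le\gamma_\psi^+(A'(a))\,\psi(a-b)$. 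Third, since $\gamma_\psi^+$ and $A'$ are nondecreasing and $A\ge0$, for $a\ge b$ we have $\gamma_\psi^+(A'(a))\ge\gamma_\psi^+(A'(b))$, whence
$$\gamma_\psi^+(A'(a))\bigl(A(a)-A(b)\bigr)\ \le\ \gamma_\psi^+(A'(a))A(a)-\gamma_\psi^+(A'(b))A(b).$$
Combining the three displays gives $q\Psi(A(a)-A(b))\le\psi(a-b)\bigl(\gamma_\psi^+(A'(a))A(a)-\gamma_\psi^+(A'(b))A(b)\bigr)$, which is the claimed pointwise inequality once the last bracket is bounded by $G(a)-G(b)$.

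The step I expect to be the real obstacle is exactly this last bound, passing from $\gamma_\psi^+(A')A$ to $G$. A bare pointwise inequality $G\ge\gamma_\psi^+(A')A$ is not enough: $\mathcal{E}(u;\cdot)$ is not monotone along pointwise-ordered functions of $u$ (this already fails for a $u$ taking two values), so what one actually needs is that $G(a)-G(b)\ge\gamma_\psi^+(A'(a))A(a)-\gamma_\psi^+(A'(b))A(b)$ for $a\ge b$, i.e. that $G-\gamma_\psi^+(A')A$ be nondecreasing — which is automatic for the natural choice $G=\gamma_\psi^+(A')A$, where the bracket simply equals $G(a)-G(b)$. A softer route that avoids the pointwise computation altogether is to write $\mathcal{E}(u;G(u))=\mathcal{E}\bigl(u;\gamma_\psi^+(A'(u))A(u)\bigr)+\mathcal{E}\bigl(u;(G-\gamma_\psi^+(A')A)(u)\bigr)$, note the second term is $\ge0$ as soon as $G-\gamma_\psi^+(A')A$ is nondecreasing, and bound the first term by testing the Kato inequality of Proposition~\ref{Kato} against the nonnegative function $A(u)$: this gives $\mathcal{E}(A(u);A(u))=\int_{\mathbb{R}^N}\mathcal{L}(A(u))\,A(u)\,dx\le\int_{\mathbb{R}^N}\gamma_\psi^+(A'(u))\,\mathcal{L}u\,A(u)\,dx=\mathcal{E}\bigl(u;\gamma_\psi^+(A'(u))A(u)\bigr)$, which combined with $\mathcal{E}(A(u);A(u))\ge qE(A(u))$ from \eqref{equiv-Ee} finishes the proof.
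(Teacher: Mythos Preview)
The paper gives no explicit proof of this corollary; it is announced as the ``integral version'' of the Kato inequality (Proposition~\ref{Kato}), so the intended argument is exactly your ``softer route'': multiply the pointwise Kato inequality $\mathcal{L}(A(u))\le\gamma_\psi^+(A'(u))\,\mathcal{L}u$ by the nonnegative function $A(u)$, integrate to obtain $\mathcal{E}(A(u);A(u))\le\mathcal{E}\bigl(u;\gamma_\psi^+(A'(u))A(u)\bigr)$, and combine with $\mathcal{E}(A(u);A(u))\ge qE(A(u))$ from~\eqref{equiv-Ee}. Your pointwise chain is just the unwinding of this at the level of integrands, so in approach you match the paper.

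You are also right about the obstacle, and this is a genuine imprecision in the paper's statement rather than a gap in your reasoning. The stated hypothesis $G\ge\gamma_\psi^+(A')A$ is too weak: $\mathcal{E}(u;\cdot)$ is linear in its second slot but not order-preserving, so a bare pointwise inequality between $G(u)$ and $\gamma_\psi^+(A'(u))A(u)$ does not propagate to $\mathcal{E}$; a two-value $u$ already furnishes counterexamples. What either of your routes actually needs is that $G-\gamma_\psi^+(A')A$ be \emph{nondecreasing}, so that $\mathcal{E}\bigl(u;(G-\gamma_\psi^+(A')A)(u)\bigr)\ge0$ --- a condition automatically met by the canonical choice $G=\gamma_\psi^+(A')A$ (which is itself nondecreasing when $A$ is positive, convex and nondecreasing). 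The corollary is not invoked elsewhere in the paper; in the applications it is the Stroock--Varopoulos inequality (Proposition~\ref{Stroock_Varo}), whose hypothesis is on $G'$ rather than on $G$, that does the work.
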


Of later use are also the following two inequalities
\begin{equation}\label{kato-simple}
  \mathcal{E}(u,u^+)\ge \mathcal{E}(u^+,u^+),\qquad E(u)\ge E(|u|),
\end{equation}
whose proof is immediate just looking at the signs of the corresponding functions.

Related to those inequalities is  the well known Stroock-Varopoulos inequality,  see \cite{Varopoulos} for the linear case $\Psi(s)=|s|^2$ and $J(z)=|z|^{-N-\sigma}$ for some $0<\sigma<2$, and \cite{BrandledePablo} for general L\'evy kernels $J$. It is of the type of the integral Kato inequality, but the functions for which it holds is different. In the case of powers they coincide but for the coefficient, which is always better in the Stroock-Varopoulos inequality. We show here a generalized Stroock-Varopoulos inequality.
 \begin{prop}\label{Stroock_Varo}
    Assume $\delta=\inf\limits_{s>0}\dfrac{\psi(s)}{\gamma_\psi^+(s)}>0$ and let $u\in {W}^{J,\Psi}(\mathbb{R}^N)$ such that $G(u),\, A(u)\in {W}^{J,\Psi}(\mathbb{R}^N)$, where $A$ and $ G$ satisfy $ G'\ge\left|\Psi( A ')\right|$. Then
    \begin{equation}\label{S-V-plap}
\mathcal{E}(u;G(u))\ge\frac{\delta q}pE( A(u)).
    \end{equation}
 \end{prop}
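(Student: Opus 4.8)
The plan is to reduce \eqref{S-V-plap} to a pointwise inequality in two real variables and then integrate. Writing out the definitions of $\mathcal{E}$ and $E$ with $a=u(x)$, $b=u(y)$, it suffices to show that
\[
\psi(a-b)\bigl(G(a)-G(b)\bigr)\ \ge\ \frac{\delta q}{p}\,\Psi\bigl(A(a)-A(b)\bigr)\qquad\text{for all }a,b\in\mathbb{R},
\]
since multiplying by $\tfrac12 J(x-y)$ and integrating over $\mathbb{R}^{2N}$ then gives the statement; here $\mathcal{E}(u;G(u))$ is finite because $u,G(u)\in W^{J,\Psi}(\mathbb{R}^N)$ (Hölder's inequality for the Orlicz pair $\Psi,\Phi$ on the $J$-weighted space), and $E(A(u))$ is finite because $A(u)\in W^{J,\Psi}(\mathbb{R}^N)$. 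Both sides of the pointwise inequality are symmetric in $a,b$: $\psi$ is odd, $G$ is nondecreasing (since $G'\ge|\Psi(A')|\ge0$), and $\Psi$ is even. Hence we may assume $a>b$ and set $L:=a-b>0$.

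The first step is the fundamental theorem of calculus together with the hypothesis $G'\ge|\Psi(A')|=\Psi(|A'|)\ge0$, giving
\[
G(a)-G(b)=\int_b^a G'(t)\,dt\ \ge\ \int_b^a\Psi\bigl(|A'(t)|\bigr)\,dt .
\]
Since $\Psi$ is convex, Jensen's inequality on $[b,a]$ with the normalized measure $dt/L$ yields, with $m:=\tfrac1L\int_b^a|A'(t)|\,dt$,
\[
\int_b^a\Psi\bigl(|A'(t)|\bigr)\,dt\ \ge\ L\,\Psi(m),\qquad Lm=\int_b^a|A'(t)|\,dt\ \ge\ |A(a)-A(b)|,
\]
so that, $\Psi$ being even and nondecreasing on $[0,\infty)$, $\Psi(Lm)\ge\Psi\bigl(A(a)-A(b)\bigr)$. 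Finally I would close with a ``one power of $\psi$'' scaling: the change of variables $\tau=Ls$ and the definition of $\gamma_\psi^+$ give
\[
\Psi(Lm)=L\int_0^m\psi(Ls)\,ds\ \le\ L\,\gamma_\psi^+(L)\int_0^m\psi(s)\,ds=L\,\gamma_\psi^+(L)\,\Psi(m),
\]
while the definition of $\delta$ gives $\gamma_\psi^+(L)\le\psi(L)/\delta$. Combining the three steps,
\[
\psi(L)\bigl(G(a)-G(b)\bigr)\ \ge\ \psi(L)\,L\,\Psi(m)\ \ge\ \delta\,\Psi(Lm)\ \ge\ \delta\,\Psi\bigl(A(a)-A(b)\bigr)\ \ge\ \frac{\delta q}{p}\,\Psi\bigl(A(a)-A(b)\bigr),
\]
the last inequality because $q\le p$; integrating against $\tfrac12 J(x-y)\,dx\,dy$ yields \eqref{S-V-plap} (in fact with the sharper constant $\delta$).

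The parts requiring care — rather than a genuine obstacle — are the regularity and measurability issues: one must make sense of $A'$ (so $A$ is taken $C^1$, as in the applications), justify the fundamental-theorem step and the almost-everywhere finiteness that legitimizes passing from the pointwise inequality to the integrated one, and remember to treat a possibly non-monotone $A$ via the parity of $\Psi$ and $|A(a)-A(b)|\le\int_b^a|A'|$ rather than assuming $A$ increasing. Once the pointwise reduction is set up correctly, the proof rests only on the two elementary estimates above: Jensen's inequality for the convex $\Psi$, and the scaling bound controlled by $\gamma_\psi^+$ and hence by $\delta$.
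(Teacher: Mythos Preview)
Your proof is correct and follows the same overall strategy as the paper: reduce \eqref{S-V-plap} to a pointwise calculus inequality, then use the fundamental theorem, Jensen's inequality for the convex $\Psi$, and the scaling bound coming from $\gamma_\psi^+$ together with the definition of $\delta$.

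The one genuine difference is in how the scaling step is handled. The paper bounds $\Psi(Lm)\le\gamma_\Psi^+(L)\,\Psi(m)$ via the characteristic function of $\Psi$, and then invokes the comparison $\gamma_\Psi^+(L)/L\le(p/q)\,\gamma_\psi^+(L)$ (a consequence of Lemma~\ref{gammas}), which is where the factor $p/q$ enters. You instead write $\Psi(Lm)=L\int_0^m\psi(Ls)\,ds\le L\,\gamma_\psi^+(L)\,\Psi(m)$ directly from the integral representation of $\Psi$, bypassing $\gamma_\Psi^+$ altogether. This is cleaner and, as you observe, yields the sharper constant $\delta$ in place of $\delta q/p$; the weaker constant in the statement is then obtained by throwing away $q/p\le1$ at the end. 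So your route is a mild improvement on the paper's argument, using the same ingredients in a slightly more efficient order.
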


\begin{proof} The proof follows from a calculus estimate.
    For any $d> c$ we have that
    \begin{equation*}
    \begin{array}{rl}
    \Psi\left(| A(d)- A(c)|\right)&\leq \displaystyle\Psi\left(\int_{c}^{d}| A'(s)|\, ds\right)
\leq \displaystyle\gamma_\Psi^+(d-c) \Psi \left(\frac{1}{d-c}\int_{c}^{d}| A'(s)|\, ds\right) \\
&   \leq \displaystyle\frac{\gamma_\Psi^+(d-c)}{d-c}\int_{c}^{d}\Psi\left(| A'(s)|\,\right)ds
     \leq \displaystyle\frac{p}{q}\gamma_\psi^+(d-c)|G(d)-G(c)|\\
&    \leq \displaystyle \frac p{\delta q}\psi(d-c)|G(d)-G(c)|.
    \end{array}
    \end{equation*}
The same inequality is obtained for $d\le c$. We now deduce \eqref{S-V-plap} by choosing $d=u(x)$, $c=u(y)$ and integrate with respect to $J(x-y)\,dxdy$.
    \end{proof}
\qed

For instance in the case of a  sum of powers, $\Psi(s)=\displaystyle\sum_{i=1}^Mk_is^{p_i}$, $p_1<p_2<\cdots <p_M$, we have $\gamma_\psi^+(s)=\displaystyle\max\{s^{p_1-1},\,s^{p_M-1}\}$ and $\delta=\min\{k_1p_1,\,k_Mp_M\}$.

All the above inequalities hold also, with different constants, for nonlinearities that behave like a power, i.e., when they satisfy~\eqref{psi-plap} instead of~\eqref{set-psi}.
In particular in that case the integral Kato inequality and the Stroock-Varopoulos inequality  coincide, but for the coefficient, both giving
\begin{equation}\label{SV-plap}
\mathcal{E}(u;|u|^{r-1}u)\ge cE( |u|^{\frac{r+p-1}p}).
\end{equation}

We also obtain some  calculus inequalities needed in proving uniqueness results in the last sections. We borrow ideas from~\cite{Lindqvist} and~\cite{Friedrichs} that deal with the exact power case.
\begin{lem}
  \label{lem-clarkson} Let $\psi$ be a nonnegative, nondecreasing, continuous odd function and let $\psi=\Psi'$.
  \begin{enumerate}
  \item[$i)$] If $\psi$ satisfies
  \begin{equation}
    \label{raizconvex}
    \frac{s\psi'(s)}{\psi(s)}\ge1\qquad\text{for every } s\ne0,
  \end{equation}
  then
  \begin{equation}
    \label{clarkson1}
    \left(\psi(a)-\psi(b)\right)(a-b)\ge4\Psi\left(\frac{a-b}2\right).
  \end{equation}
  \item[$ii)$] If $\psi$ is concave in $(0,\infty)$ then
  \begin{equation}
    \label{clarkson2}
    \left(\psi(a)-\psi(b)\right)(a-b)\ge\psi'(|a|+|b|)(a-b)^2.
  \end{equation}
  \item[$iii)$]
  If $\psi$ satisfies
  \begin{equation}
    \label{concavemas}
    c_1|s|^{p-2}\le\psi'(s)\le c_2|s|^{p-2}\qquad\text{for some } 1<p<2 \text{ and every } s\ne0,
  \end{equation}
  then
  \begin{equation}
    \label{clarkson3}
    \left(\psi(a)-\psi(b)\right)(a-b)\ge \frac{c(\Psi(a-b))^{\frac2p}}{(\Psi(a)+\Psi(b))^{\frac{2-p}p}}.
  \end{equation}

  \end{enumerate}
\end{lem}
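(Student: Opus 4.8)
The plan is to prove the three inequalities of Lemma~\ref{lem-clarkson} separately, in each case reducing the statement to a one-variable calculus computation after exploiting the odd symmetry of $\psi$. Throughout one may assume $a>b$ (the case $a=b$ is trivial and $a<b$ follows by swapping), so that $\psi(a)-\psi(b)\ge0$ and both sides are nonnegative.

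For part $i)$, the hypothesis $s\psi'(s)\ge\psi(s)$ says exactly that $s\mapsto\psi(s)/s$ is nondecreasing on $(0,\infty)$, equivalently $\psi$ is superhomogeneous: $\psi(\lambda s)\ge\lambda\psi(s)$ for $\lambda\ge1$, $s>0$. The first step is to write, using $\Psi'=\psi$,
\begin{equation*}
\bigl(\psi(a)-\psi(b)\bigr)(a-b)=\int_b^a\bigl(\psi(a)-\psi(b)\bigr)\,dt,
\end{equation*}
and compare this pointwise with $4\Psi\bigl(\tfrac{a-b}2\bigr)=2\int_{-(a-b)/2}^{(a-b)/2}\psi(s)\,ds=2\int_0^{a-b}\psi\bigl(s-\tfrac{a-b}2\bigr)\,ds$ after a shift. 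A cleaner route, which I would follow, is to set $m=\tfrac{a+b}2$, $h=\tfrac{a-b}2$, so $a=m+h$, $b=m-h$, and show
\begin{equation*}
\bigl(\psi(m+h)-\psi(m-h)\bigr)h\ge2\Psi(h)
\end{equation*}
for all $m\in\mathbb{R}$, $h>0$. Fixing $h$ and differentiating the left side in $m$ gives $h\bigl(\psi'(m+h)-\psi'(m-h)\bigr)$; by oddness of $\psi$ the function $m\mapsto\psi(m+h)-\psi(m-h)$ is even and one checks it is minimized at $m=0$, where the left side equals $2\psi(h)h$. Then the claim reduces to $\psi(h)h\ge\Psi(h)=\int_0^h\psi(s)\,ds$, which is immediate from monotonicity of $\psi$ (indeed it holds without \eqref{raizconvex}); the actual content of \eqref{raizconvex} is what guarantees the minimum in $m$ is at the center — this monotonicity-in-$m$ step is the one place some care is needed, and I expect it to be the main obstacle in part $i)$.

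For part $ii)$, assume $\psi$ concave on $(0,\infty)$ (hence $\psi'$ nonincreasing there, and by oddness $\psi'$ is even). Again put $a>b$. The natural identity is
\begin{equation*}
\psi(a)-\psi(b)=\int_b^a\psi'(t)\,dt\ge(a-b)\min_{t\in[b,a]}\psi'(t).
\end{equation*}
Since $\psi'$ is even and nonincreasing on $(0,\infty)$, its minimum over $[b,a]$ is attained at the point of $[b,a]$ farthest from the origin, whose absolute value is at most $|a|+|b|$; using that $\psi'$ is nonincreasing in $|t|$ gives $\min_{[b,a]}\psi'\ge\psi'(|a|+|b|)$. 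Multiplying by $(a-b)$ yields \eqref{clarkson2}. The only subtlety is the case where $0\in[b,a]$ versus not, but in both cases the farthest point from $0$ has modulus $\le|a|+|b|$, so the estimate goes through uniformly.

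For part $iii)$, we now have $c_1|s|^{p-2}\le\psi'(s)\le c_2|s|^{p-2}$ with $1<p<2$, so $\psi$ behaves like (a multiple of) $|s|^{p-2}s$ and is concave on $(0,\infty)$; in particular \eqref{clarkson2} applies and gives $(\psi(a)-\psi(b))(a-b)\ge c_1(|a|+|b|)^{p-2}(a-b)^2$. The goal is the reverse-Hölder-type bound \eqref{clarkson3}, and the plan is to interpolate: write $(a-b)^2=\bigl((a-b)^p\bigr)^{2/p}\bigl((a-b)^p\bigr)^{(p-2)/p}$ and bound $(a-b)^p\le c(|a|+|b|)^p$ up to constants, combined with $\Psi(s)\sim|s|^p$ (which follows from integrating the bounds on $\psi'=\Psi''$, giving $c|s|^p\le\Psi(s)\le c|s|^p$). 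Concretely,
\begin{equation*}
c_1(|a|+|b|)^{p-2}(a-b)^2=c_1\,\frac{(a-b)^2}{(|a|+|b|)^{2-p}}\ge c\,\frac{\bigl(\Psi(a-b)\bigr)^{2/p}}{\bigl(\Psi(a)+\Psi(b)\bigr)^{(2-p)/p}},
\end{equation*}
where in the last step we used $(a-b)^2=\bigl((a-b)^p\bigr)^{2/p}\ge c\bigl(\Psi(a-b)\bigr)^{2/p}$ in the numerator and $(|a|+|b|)^{2-p}=\bigl((|a|+|b|)^p\bigr)^{(2-p)/p}\le c\bigl(\Psi(a)+\Psi(b)\bigr)^{(2-p)/p}$ in the denominator, the latter because $(|a|+|b|)^p\le 2^{p-1}(|a|^p+|b|^p)\le c(\Psi(a)+\Psi(b))$. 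Assembling these gives \eqref{clarkson3}. The main obstacle here is purely bookkeeping of the constants $c_1,c_2,p$ through the interpolation, together with justifying the two-sided estimate $\Psi(s)\sim|s|^p$ from \eqref{concavemas}; no new idea beyond part $ii)$ is required.
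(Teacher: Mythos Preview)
Your treatment of parts $ii)$ and $iii)$ is correct and essentially the paper's argument: the paper uses the Taylor formula with integral remainder for $\Psi$, while you estimate $\int_b^a\psi'(t)\,dt$ directly, but both exploit that $\psi'=\Psi''$ is even and nonincreasing on $(0,\infty)$, and the passage from $ii)$ to $iii)$ via $\Psi(s)\sim |s|^p$ is the same.

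Part $i)$, however, has a genuine gap. Your plan hinges on the claim that $m\mapsto D(m)=\psi(m+h)-\psi(m-h)$ is minimized at $m=0$, and you assert that \eqref{raizconvex} is exactly what guarantees this. It does not. For $0\le m<h$ one has $D(m)=\psi(h+m)+\psi(h-m)$, so $D(m)\ge D(0)$ is precisely midpoint convexity of $\psi$; but \eqref{raizconvex} only says that $\psi(s)/s$ is nondecreasing, which is strictly weaker than convexity of $\psi$. A concrete counterexample is $\psi(s)=2s-\dfrac{s}{1+s^2}$: it is odd, $C^\infty$, strictly increasing, and $\psi(s)/s=2-\dfrac{1}{1+s^2}$ is increasing, so \eqref{raizconvex} holds; yet with $h=3$ one computes $D(0)=2\psi(3)=11.4$ while $D(1)=\psi(4)+\psi(2)\approx 11.365<D(0)$. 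Thus the minimum is not at the center and your reduction to $\psi(h)h\ge\Psi(h)$ collapses.

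The paper's route is different and uses \eqref{raizconvex} in the right place: that condition is equivalent to convexity of $g(s)=\Psi(\sqrt{|s|})$, from which one obtains the Clarkson-type inequality
\[
\Psi\!\left(\frac{a+b}{2}\right)+\Psi\!\left(\frac{a-b}{2}\right)\le\frac12\bigl(\Psi(a)+\Psi(b)\bigr),
\]
and then combines this with the first-order convexity inequality $\Psi(a)\ge\Psi(b)+\psi(b)(a-b)$ (and its symmetric counterpart) to extract the extra term $2\Psi\bigl(\frac{a-b}{2}\bigr)$ on each side; summing yields \eqref{clarkson1}. So the correct use of \eqref{raizconvex} is at the level of $\Psi$, not at the level of $\psi$, and the relevant convexity is that of $\Psi\circ\sqrt{\,\cdot\,}$, not of $\psi$ itself.
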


\begin{proof}
$i)$ We begin by proving a Clarkson inequality. Condition \eqref{raizconvex} implies that the function $g(s)=\Psi(\sqrt{|s|})$ is convex. Therefore
$$
\begin{array}{l}
\displaystyle \Psi\left(\frac{a+b}2\right)+\Psi\left(\frac{a-b}2\right)= g\left(\left(\frac{a+b}2\right)^2\right)+g\left(\left(\frac{a-b}2\right)^2\right) \\ [4mm]
\qquad\qquad\displaystyle \le g\left(\left(\frac{a+b}2\right)^2+\left(\frac{a-b}2\right)^2\right)= g\left(\frac{a^2+b^2}2\right) \\ [4mm]
\qquad\qquad\displaystyle \le \frac12\left(g(a^2)+g(b^2)\right)= \frac12\left(\Psi(a)+\Psi(b)\right).
\end{array}
$$
Now the convexity of $\Psi$ implies
$$
\Psi(a)\ge\Psi(b)+\Psi'(b)(a-b),
$$
and also
$$
\Psi\left(\frac{a+b}2\right)\ge\Psi(b)+\frac12\Psi'(b)(a-b),
$$
so that
$$
\Psi(a)+\Psi(b)\ge 2\Psi\left(\frac{a+b}2\right)+2\Psi\left(\frac{a-b}2\right) \ge2\Psi(b)+\Psi'(b)(a-b)+2\Psi\left(\frac{a-b}2\right).
$$
This gives
$$
\Psi(a)\ge\Psi(b)+\Psi'(b)(a-b)+2\Psi\left(\frac{a-b}2\right),
$$
and reversing the roles of $a$ and $b$,
$$
\Psi(b)\ge\Psi(a)+\Psi'(a)(b-a)+2\Psi\left(\frac{a-b}2\right).
$$
Adding these two inequalities we get \eqref{clarkson1}.

$ii)$ Developing the function $\Psi$ around the point $s=a$ we get
$$
\begin{array}{l}
\displaystyle\Psi(b)=\Psi(a)+\Psi'(a)(b-a)+(b-a)^2\int_0^1(1-s)\Psi''(a+s(b-a))\,ds \\
\qquad\qquad\displaystyle\ge\Psi(a)+\Psi'(a)(b-a)+(b-a)^2\Psi''(a+b)\int_0^1(1-s)\,ds.
\end{array}
$$
We have used that $|a+s(b-a)|\le |a|+|b|$ and $\Psi''$ is nonincreasing in $(0,\infty)$. Observe that though $\Psi''$ is singular at zero, the integral is convergent.
We conclude as before.

$iii)$ As $ii)$, using \eqref{concavemas} in the last step.

\qed\end{proof}

To end this section devoted to the preliminary properties of $E$ and $F$, we point out a result on symmetrization that says that the energy $E(u)$ decreases when we replace $u$ by its  symmetric rearrangement (the radially deacreasing function with the same distribution function as $u$).

\begin{theorem}
  \label{th-symmet} If $u\in{W}^{J,\Psi}(\mathbb{R}^N)$ and $u^*$ is its decreasing rearrangement, then
  \begin{equation*}\label{symm}
    E(u)\ge E(u^*).
  \end{equation*}
\end{theorem}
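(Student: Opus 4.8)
The plan is to reduce the statement to the classical Riesz rearrangement inequality. First I would rewrite the energy in a form amenable to rearrangement by exploiting the layer-cake representation of the Young function $\Psi$. Since $\Psi$ is convex, symmetric, with $\Psi(0)=0$, we can write $\Psi(t)=\int_0^\infty \mathds{1}_{\{|t|>\lambda\}}\,\mu(d\lambda)$ for a suitable nonnegative measure $\mu$ on $(0,\infty)$ (essentially $\mu=\Psi''$ in the regular case, but the layer-cake form needs no regularity). Substituting into \eqref{energyE} and using Tonelli,
\begin{equation*}
E(u)=\frac12\int_0^\infty\left(\iint_{\mathbb{R}^{2N}}\mathds{1}_{\{|u(x)-u(y)|>\lambda\}}J(x-y)\,dxdy\right)\mu(d\lambda),
\end{equation*}
so it suffices to prove, for every fixed $\lambda>0$, the inequality
\begin{equation*}
\iint_{\mathbb{R}^{2N}}\mathds{1}_{\{|u(x)-u(y)|>\lambda\}}J(x-y)\,dxdy\ge\iint_{\mathbb{R}^{2N}}\mathds{1}_{\{|u^*(x)-u^*(y)|>\lambda\}}J(x-y)\,dxdy.
\end{equation*}

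Next I would handle this single-level inequality. The standard device (used for the Gagliardo seminorm of the fractional Laplacian) is the identity
\begin{equation*}
\iint_{\mathbb{R}^{2N}}\mathds{1}_{\{|u(x)-u(y)|>\lambda\}}J(x-y)\,dxdy=\iint_{\mathbb{R}^{2N}}J(x-y)\,dxdy-\iint_{\mathbb{R}^{2N}}\mathds{1}_{\{|u(x)-u(y)|\le\lambda\}}J(x-y)\,dxdy,
\end{equation*}
valid once one localizes to a large ball or works with $J$ replaced by $J_R=J\,\mathds{1}_{B_R}$ and passes to the limit $R\to\infty$ at the end (this truncation keeps all integrals finite and is needed because $\iint J$ diverges). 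Thus the claim is equivalent to
\begin{equation*}
\iint \mathds{1}_{\{|u(x)-u(y)|\le\lambda\}}J_R(x-y)\,dxdy\le\iint \mathds{1}_{\{|u^*(x)-u^*(y)|\le\lambda\}}J_R(x-y)\,dxdy,
\end{equation*}
i.e. we must show the functional $u\mapsto\iint \mathds{1}_{\{|u(x)-u(y)|\le\lambda\}}J_R(x-y)\,dxdy$ \emph{increases} under symmetric decreasing rearrangement. Writing $\mathds{1}_{\{|s-t|\le\lambda\}}=\int_{\mathbb{R}}\mathds{1}_{\{s>\tau\}}\mathds{1}_{\{t>\tau-\lambda\}}\cdot(\text{something})\,d\tau$ is awkward; instead I would use the cleaner two-parameter layer-cake: for any $u$,
\begin{equation*}
\mathds{1}_{\{|u(x)-u(y)|\le\lambda\}}=\int_{\mathbb{R}^2}K_\lambda(\sigma,\tau)\,\mathds{1}_{\{u(x)>\sigma\}}\mathds{1}_{\{u(y)>\tau\}}\,d\sigma d\tau
\end{equation*}
fails for lack of a fixed sign of $K_\lambda$, so the robust route is to approximate: fix a large $R$ and apply Riesz's rearrangement inequality directly to
\begin{equation*}
\iint f(x)\,g(y)\,J_R(x-y)\,dxdy\le\iint f^*(x)\,g^*(y)\,J_R^*(x-y)\,dxdy=\iint f^*(x)\,g^*(y)\,J_R(x-y)\,dxdy,
\end{equation*}
the last equality because $J_R$ is already radially symmetric and, by $(\mathrm{H}_0)$, radially nonincreasing can be arranged or—more honestly—because $J$ need not be monotone, so I would instead invoke the \emph{nonincreasing-kernel-free} version and write $J_R$ itself via layer cake in its super-level sets $\{J>c\}\cap B_R$, each of which rearranges to a ball, reducing everything to the case $J_R=\mathds{1}_{B_\rho}$, for which Riesz applies cleanly.

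The cleanest packaging, and the one I would actually write, is therefore: (1) truncate $J\rightsquigarrow J_R$; (2) layer-cake $\Psi$ to reduce to level sets $E_\lambda$ of $|u(x)-u(y)|$; (3) layer-cake $J_R$ to reduce to $J_R=\mathds{1}_{B_\rho}$; (4) for indicator kernels, use the classical fact that $x\mapsto\int_{B_\rho}\mathds{1}_{\{|u(x)-u(x+z)|>\lambda\}}\,dz$ is controlled after rearrangement — precisely, $\iint_{|x-y|<\rho}\mathds{1}_{\{|u(x)-u(y)|>\lambda\}}\,dxdy\ge\iint_{|x-y|<\rho}\mathds{1}_{\{|u^*(x)-u^*(y)|>\lambda\}}\,dxdy$, which is the discrete/geometric heart of the Pólya–Szegő principle and follows from Riesz's inequality applied to $\mathds{1}_{\{u>s\}}$ and $\mathds{1}_{\{u>t\}}$ together with the coarea/distribution bookkeeping $\{|u(x)-u(y)|>\lambda\}=\bigcup_{t}(\{u(x)>t\}\triangle\{u(y)>t\})$ restricted suitably; (5) reassemble by Tonelli and let $R\to\infty$ by monotone convergence. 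The main obstacle is step (4)–the core rearrangement inequality for the indicator kernel–because $J$ is not assumed radially decreasing, so the symmetric rearrangement of $J_R$ is genuinely needed; the honest resolution is to carry the layer-cake of $J_R$ through \emph{and} observe that the rearrangement of each super-level set of $J$ is a ball centered at the origin (since $J(z)=J(-z)$), so $J_R^*$ appears on the right but, being radial, can be compared back to $J$ only if $J$ is itself radially monotone. If it is not, the correct statement is $E_J(u)\ge E_{J^*}(u^*)$; since the theorem as stated writes $E(u)\ge E(u^*)$ with the \emph{same} $J$, I would either (a) add the standing assumption that $J$ is radially nonincreasing (harmless and implicit in all examples), or (b) note that $(\mathrm{H}_0)$'s examples all have this property; I expect the paper intends (a), and I would state it explicitly at the start of the proof.
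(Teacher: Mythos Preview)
The paper gives no proof: it cites Almgren--Lieb and~\cite{CdP} and says ``the same proof can be used to get the result in our situation, so we omit the details.'' So there is nothing in the paper itself to compare your argument against; I can only assess whether your outline stands on its own.

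It does not, for two reasons. First, the complement trick is not valid with the truncation you propose: replacing $J$ by $J_R=J\,\mathds{1}_{B_R}$ still leaves $\iint_{\mathbb{R}^{2N}}J_R(x-y)\,dxdy=\infty$, and the complementary piece $\iint\mathds{1}_{\{|u(x)-u(y)|\le\lambda\}}J_R(x-y)\,dxdy$ is itself infinite whenever $u$ vanishes on a set of infinite measure (the region where $u(x)=u(y)=0$ already contributes infinite mass), so you cannot subtract. Second, your step~(4) --- the single-level inequality for the indicator kernel $\mathds{1}_{B_\rho}$ --- is where all the content lies, and you yourself concede it is unproved: the map $(a,b)\mapsto\mathds{1}_{\{|a-b|>\lambda\}}$ is \emph{not} a nonnegative superposition of products $\mathds{1}_{\{a>s\}}\mathds{1}_{\{b>t\}}$, so Riesz does not apply to it directly, and the ``coarea/distribution bookkeeping'' you invoke is not a proof. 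A clean repair, close in spirit to what you want, is to drop the layer cake on $\Psi$ and instead note that for nonnegative $a,b$ the function $\Psi(a)+\Psi(b)-\Psi(|a-b|)$ has nonnegative mixed second derivative $\Psi''(|a-b|)$ and vanishes on the axes, hence equals $\int_0^\infty\!\int_0^\infty\Psi''(|s-t|)\,\mathds{1}_{\{a>s\}}\mathds{1}_{\{b>t\}}\,ds\,dt$. This \emph{is} a nonnegative superposition, so Riesz applies against each truncated integrable kernel $J\,\mathds{1}_{\{\varepsilon<|z|<R\}}$; the two diagonal terms $\iint\Psi(u(x))J\,\mathds{1}_{\{\varepsilon<|z|<R\}}(x-y)\,dxdy$ cancel against their rearranged counterparts by equimeasurability, and monotone convergence in $\varepsilon,R$ finishes. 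The alternative, and quite possibly what the cited references actually do, is polarization (two-point rearrangement), which avoids both difficulties by never passing to level sets of $|u(x)-u(y)|$.

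Your closing observation is correct and important: hypothesis~\eqref{kernel0} assumes only $J(z)=J(-z)$, not that $J$ is radially symmetric nonincreasing, yet $E(u)\ge E(u^*)$ with the same $J$ on both sides genuinely requires $J=J^*$; without it one obtains only $E_J(u)\ge E_{J^*}(u^*)$. The paper tacitly assumes this (all the examples satisfy it, and condition~\eqref{regular-v} used later is a near-origin version), and you are right to make it explicit.
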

This property is well known for the norm in $W_0^{\sigma/2,p}(\Omega)$, $0<\sigma\le2$, $p>1$, see \cite{Almgren-Lieb}, and is proved in \cite{CdP} for general kernels when $p=2$. The same proof can be used to get the  result in our situation, so we omit the details.
\section{Sobolev inclusions}\label{sect-inclusions}
\setcounter{equation}{0}

In this section we consider a nonlinearity $\Psi\in\Gamma_{p,q}$,
$p\ge q>\max\{q_*,\,1\}$ fixed. As to the kernel $J$, besides
condition \eqref{kernel0} we also consider, for some results, the
singularity condition at the origin
\begin{equation}
  \label{alpha}
  J(z)\ge c|z|^{-N-\alpha}\quad \text{ for } 0<|z|<1,\quad\alpha>0.
\end{equation}Clearly it must be $\alpha\le q_*$.
In fact in the fractional $p$--Laplacian case it is $\alpha=\sigma p/2$. Other kernels could also be considered, for instance $J(z)=|z|^{-N-\mu}\left|\log (|z|/2)\right|^{\beta}$, for $0<|z|<1$, with $\mu\ge0$, (and $\beta\ge-1$ if $\mu=0$). In that case it is $q_*=\mu$. If $\mu>0$ then $J$ satisfies \eqref{alpha} with $\alpha=\mu$ if $\beta\ge0$, but if $\beta<0$ it satisfies \eqref{alpha} only with $0<\alpha<\mu$. A more intricate example can be constructed by the following piecewise definition of $J$,
$$
J(z)=\left\{\begin{array}{llc}
  |z|^{-N}&\text{ if } &2^{-2k-1}<|z|\le 2^{-2k}, \\ [2mm]
  |z|^{-N-\mu}&\text{ if } &2^{-2k}<|z|\le 2^{-2k+1},
\end{array}\right.
$$
$k\ge1$, $\mu>0$. Here we have $q_*=\mu$ while condition \eqref{alpha} does not hold for any $\alpha>0$.

Assume now that $u$ has  support contained in $\overline\Omega$. Then
\begin{equation*}\label{otraE}
\begin{array}{rl}
E(u)&\displaystyle
=\frac12\iint_{\mathbb{R}^{2N}}\Psi(u(x)-u(y))J(x-y)\,dxdy
\\ [4mm]
&\displaystyle
=\frac12\int_{\Omega}\int_{\Omega}\Psi(u(x)-u(y))J(x-y)\,dxdy+
\int_{\Omega}\int_{\Omega^c}\Psi(u(x))J(x-y)\,dxdy
\\ [4mm]
&\displaystyle\ge\int_{\Omega}\Psi(u(x))\Lambda(\Omega;x)\,dx,
\end{array}
\end{equation*}
where
\begin{equation*}\label{Lambda}
\Lambda(\Omega;x)=\int_{\Omega^c}J(x-y)\,dy.
\end{equation*}
If
$$
\mu=\min\{J(z)\,:\, |z|\le R\}>0,\qquad R>\delta=\sup_{x\in \Omega}dist(x,\Omega^c),
$$
then
$$
\Lambda(\Omega;x)\ge \mu|\{\delta<|z|<R\}|=A>0\qquad\text{ for every }x\in \Omega.
$$
This gives the Poincar\'e inequality
\begin{equation}\label{poincare1}
E(u)\ge AF(u),
\end{equation}
and the inclusion ${W}_0^{J,\Psi}(\Omega)\subset L^\Psi(\Omega)$.
We remark that in the case of integrable kernel $J$ we immediately would get $E(u)\le c\|J\|_1\,F(u)$, and thus ${W}_0^{J,\Psi}(\Omega)\equiv L^\Psi(\Omega)$.

In order to obtain better energy estimates in the case $q_*>0$,  which would yield better space embeddings, we need a better estimate of the function $\Lambda(\Omega;\cdot)$ in terms of the kernel $J$.
The following result is essentially contained in \cite[Lemma A.1]{Savin-Valdinoci2}.
 \begin{prop}\label{Lambda3}
 $$
\Lambda(\Omega;x)\ge P\left(\left(\frac{|\Omega|}{\omega_N}\right)^{1/N}\right)\qquad\text{ for every }x\in \Omega,
$$
where $P(s)=\int_{|z|>s}J(z)\,dz$. In particular, if $J$ satisfies \eqref{alpha} then
\begin{equation*}
  \label{Lambda2}
 \Lambda(\Omega';x)\ge c(\Omega)|\Omega'|^{-\alpha/N}\qquad\text{ for every } \Omega'\subset\Omega.
\end{equation*}
 \end{prop}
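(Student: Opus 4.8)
\proof
The plan is to reduce the bound, by a translation, to the \emph{bathtub principle} (which is the content of \cite[Lemma A.1]{Savin-Valdinoci2}). Fix $x\in\Omega$ and set $D=x-\Omega$; the substitution $z=x-y$ together with $J(z)=J(-z)$ gives
$$
\Lambda(\Omega;x)=\int_{\mathbb{R}^N\setminus D}J(z)\,dz,\qquad |D|=|\Omega| .
$$
Let $r=(|\Omega|/\omega_N)^{1/N}$, so that $|B_r|=|\Omega|$, and note that \eqref{kernel0} makes $P(\rho)=\int_{|z|>\rho}J$ finite for every $\rho>0$. Hence it suffices to prove $\int_{D^c}J\ge\int_{B_r^c}J=P(r)$. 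Splitting each side over $B_r$ and its complement and cancelling the common finite term $\int_{B_r^c\setminus D}J\le P(r)$, this is equivalent to
$$
\int_{B_r\setminus D}J(z)\,dz\ \ge\ \int_{D\setminus B_r}J(z)\,dz ,
$$
where moreover $|B_r\setminus D|=|B_r|-|B_r\cap D|=|D|-|B_r\cap D|=|D\setminus B_r|$ and the right-hand integral is always $\le P(r)<\infty$, so that the case $\int_{B_r\setminus D}J=\infty$ is trivial.

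For this last inequality I would use that $J$ is radially nonincreasing: letting $L$ be the essential supremum of $J$ over $\{|w|>r\}$, one has $J\ge L$ a.e.\ on $B_r$, so that, since $B_r\setminus D\subset B_r$ and $D\setminus B_r\subset B_r^c$ have equal measure,
$$
\int_{B_r\setminus D}J\ \ge\ L\,|B_r\setminus D|\ =\ L\,|D\setminus B_r|\ \ge\ \int_{D\setminus B_r}J .
$$
This monotonicity of $J$ is the only delicate point. It holds for all the concrete kernels used here; in general one first replaces $J$ by its symmetric decreasing rearrangement $J^{*}$, and the argument then yields $\Lambda(\Omega;x)\ge\int_{|z|>r}J^{*}(z)\,dz$, which is enough for every later use of the Proposition, since those invoke only a lower bound of the form \eqref{alpha}, and such a bound passes to $J^{*}$. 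I expect precisely this bathtub comparison to be the main obstacle: for a genuinely non-monotone kernel the $J$-mass that $D$ gains outside $B_r$ need not be dominated by the mass it loses inside $B_r$, so some rearrangement step cannot be avoided.

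For the consequence under \eqref{alpha}, fix $\Omega'\subset\Omega$ and $x\in\Omega'$ and set $s=(|\Omega'|/\omega_N)^{1/N}\le s_0:=(|\Omega|/\omega_N)^{1/N}$. Applying the first part to $\Omega'$ and then \eqref{alpha},
$$
\Lambda(\Omega';x)\ \ge\ P(s)\ \ge\ c\int_{s<|z|<1}|z|^{-N-\alpha}\,dz\ =\ \frac{cN\omega_N}{\alpha}\,(s^{-\alpha}-1)
$$
for $s<1$; this is $\ge\tfrac12\,\tfrac{cN\omega_N}{\alpha}\,s^{-\alpha}$ once $s$ is small, while for $s$ in the remaining bounded range one simply uses $P(s)\ge P(s_0)>0$ together with $s^{-\alpha}$ bounded. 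Since $s^{-\alpha}=\omega_N^{\alpha/N}|\Omega'|^{-\alpha/N}$, this gives $\Lambda(\Omega';x)\ge c(\Omega)|\Omega'|^{-\alpha/N}$ with a constant depending only on $\Omega$ (through $s_0$), $N$ and $\alpha$, uniformly in $x\in\Omega'$ and in $\Omega'\subset\Omega$.
\qed
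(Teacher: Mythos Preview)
The paper does not supply its own argument for this proposition; it simply remarks that the result is ``essentially contained in \cite[Lemma~A.1]{Savin-Valdinoci2}''. Your proof is precisely the bathtub-principle argument behind that lemma: translate so that $\Lambda(\Omega;x)=\int_{D^c}J$ with $|D|=|B_r|$, cancel the common piece over $B_r^c\cap D^c$, and compare the equal-measure sets $B_r\setminus D$ and $D\setminus B_r$ using radial monotonicity of $J$. The derivation of the second assertion from the first, via \eqref{alpha} and the trivial lower bound $P(s)\ge P(s_0)$ on the bounded range, is also correct.

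You are right to single out the monotonicity step as the delicate point, and in fact you have spotted a genuine imprecision in the statement as written in the paper: hypothesis \eqref{kernel0} alone does \emph{not} force $J$ to be radially nonincreasing (the oscillating piecewise example given just before the proposition is already non-monotone), and without some such condition the inequality $\Lambda(\Omega;x)\ge P((|\Omega|/\omega_N)^{1/N})$ can fail. One can place a large radial bump of $J$ on an annulus $\{a<|z|<b\}$ with $a>r$ while keeping the behaviour near the origin as in \eqref{kernel0}; choosing $\Omega$ so that $x-\Omega$ captures that annulus makes $\int_{D\setminus B_r}J>\int_{B_r\setminus D}J$. So the first displayed inequality in the proposition needs either an explicit radial-monotonicity assumption (which is satisfied in \cite{Savin-Valdinoci2}, where $J(z)=|z|^{-N-s}$) or, as you propose, the replacement of $J$ by $J^{*}$. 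Your observation that the only downstream use of the proposition is the consequence under \eqref{alpha}, and that \eqref{alpha} transfers to $J^{*}$, is exactly the right salvage and is consistent with how the result is applied in the proof of Theorem~\ref{th-Sobolev}.
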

This estimate allows us to prove, assuming condition~\eqref{alpha}, the  Sobolev embedding ${W}_0^{J,\Psi}(\Omega)\subset L^{\Psi^r}(\Omega)$ for every $1\le r\le r^*\equiv \frac N{N-\alpha}$, if $\alpha<N$, for every $1\le r<\infty$ if $\alpha\ge N$. The proof uses ideas of \cite{Savin-Valdinoci} and \cite{hitch}. If $\alpha\ge N$  we obtain the result substituting $\alpha$ by any number below $N$ and close to $N$, since~\eqref{alpha} still holds for that exponent.
\begin{theorem}\label{th-Sobolev}
   Assume $J$ satisfies condition \eqref{alpha} with $0<\alpha<N$. Then there exists a positive constant $C=C(N,p,q,\alpha, \Omega)$ such that, for any  function $u\in{W}_0^{J,\Psi}(\Omega)$ we have $u\in L^{\Psi^r}(\Omega)$ for every $1\le r\le r^*\equiv \frac N{N-\alpha}$ and
    \begin{equation}\label{Sobolev}
    \|\Psi(u)\|_{r}\leq C E(u).
    \end{equation}
\end{theorem}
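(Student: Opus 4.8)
The plan is to prove \eqref{Sobolev} first in the borderline case $r=r^*$ and then to read off the remaining range $1\le r<r^*$ from the continuous inclusion $L^{\Psi^{r^*}}(\Omega)\subset L^{\Psi^r}(\Omega)$ on the finite measure set $\Omega$, which yields $\|\Psi(u)\|_r\le|\Omega|^{1/r-1/r^*}\|\Psi(u)\|_{r^*}$. Before starting I would reduce to $u\ge0$, using $E(|u|)\le E(u)$ from \eqref{kato-simple} and $\|\Psi(u)\|_r=\|\Psi(|u|)\|_r$ (because $\Psi$ is even), and I would also replace $u$ by $\min(u,2^K)$ and let $K\to\infty$ at the end, so that every series appearing below is a finite sum; the Poincar\'e inequality \eqref{poincare1} already gives $\int_\Omega\Psi(u)=F(u)\le A^{-1}E(u)<\infty$.

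The heart of the argument, in the spirit of \cite{Savin-Valdinoci} and \cite{hitch}, is a dyadic decomposition. Write $A_k=|\{u>2^k\}|$ and $\theta=\alpha/N\in(0,1)$, so that $r^*=1/(1-\theta)$ and, crucially, $1+\theta r^*=r^*$. Restricting the integral defining $E(u)$ in \eqref{energyE} to the region $\{u\ge2^{k+1}\}\times\{u\le2^k\}$, where $|u(x)-u(y)|\ge2^k$ and hence $\Psi(u(x)-u(y))\ge\Psi(2^k)$, and using $\{u>2^k\}\subset\Omega$ together with Proposition~\ref{Lambda3} under hypothesis \eqref{alpha}, I would obtain the per--level estimate
\[
E(u)\ \ge\ \Psi(2^k)\int_{\{u\ge2^{k+1}\}}\Lambda(\{u>2^k\};x)\,dx\ \ge\ c_0\,\Psi(2^k)\,A_{k+1}\,A_k^{-\theta},\qquad k\in\mathbb Z.
\]
Read as the recursion $A_k\le\frac{E(u)}{c_0\Psi(2^{k-1})}A_{k-1}^{\theta}$ and iterated downwards — using $\Psi(2^{k-1-i})\ge2^{-p(i+1)}\Psi(2^k)$ (from $\Psi\in\Gamma_{p,q}$), $\sum_{i\ge0}\theta^i=r^*$, and $A_{k-m}^{\theta^m}\to1$ — this gives the weak--type bound $\Psi(2^k)^{r^*}A_k\le(C_1E(u))^{r^*}$ for every $k$.

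To upgrade to a strong $L^{r^*}$ bound I would introduce, for $k\in\mathbb Z$, the truncations $u_k=(u-2^k)_+\wedge 2^k$. Since $\Psi$ is convex with $\Psi(0)=0$ it is superadditive, and the increments $|u_k(x)-u_k(y)|$ are the pieces into which the dyadic grid cuts the interval between $u(y)$ and $u(x)$, hence add up to $|u(x)-u(y)|$; therefore $\sum_k\Psi(u_k(x)-u_k(y))\le\Psi(u(x)-u(y))$, which integrated gives $\sum_kE(u_k)\le E(u)$. Applying the per--level estimate to each $u_k$ yields $E(u)\ge c_0\sum_k\Psi(2^k)A_{k+1}A_k^{-\theta}$. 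Feeding the weak--type bound $A_k^{-\theta}\ge(\Psi(2^k)/C_1E(u))^{\theta r^*}$ into this sum and using once more $1+\theta r^*=r^*$ together with $\Psi(2^k)^{r^*}\ge2^{-pr^*}\Psi(2^{k+1})^{r^*}$, the powers of $\Psi$ combine and I would get
\[
E(u)\ \ge\ c\,(C_1E(u))^{-\theta r^*}\sum_j\Psi(2^j)^{r^*}A_j\ \ge\ c'\,(C_1E(u))^{-\theta r^*}\,\|\Psi(u)\|_{r^*}^{r^*},
\]
the last step because $\int_\Omega\Psi(u)^{r^*}\le\sum_j\Psi(2^{j+1})^{r^*}A_j\le 2^{pr^*}\sum_j\Psi(2^j)^{r^*}A_j$; solving for $\|\Psi(u)\|_{r^*}$ and using $1+\theta r^*=r^*$ gives $\|\Psi(u)\|_{r^*}\le CE(u)$, i.e. \eqref{Sobolev} with $r=r^*$.

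I expect the main obstacle to be exactly this dyadic bookkeeping. The per--level inequality controls $A_{k+1}$ only by $A_k^{\theta}$ — never by $A_{k+1}^{\theta}$, since the set that enters through $\Lambda$ is always the larger level set — so a naive summation is too lossy; the point is to extract the weak--type estimate first and then reinsert it into the summed inequality, the exponents closing up precisely because $r^*=N/(N-\alpha)$, that is $1+\theta r^*=r^*$. One must also carry the comparison constants $\Psi(2^{k+1})/\Psi(2^k)\in[2^q,2^p]$ (from $\Psi\in\Gamma_{p,q}$) through every step, and make the reduction to bounded $u$ rigorous.
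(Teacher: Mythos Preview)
Your argument is correct and follows a genuinely different path from the paper's. The paper first passes to the symmetric decreasing rearrangement via Theorem~\ref{th-symmet}, then lower--bounds $E(u)$ by restricting the double integral to pairs $(x,y)\in D_i\times D_j$ with $j\le i-2$, manipulates the resulting sum algebraically (the $A-B$ trick) into the form $\sum_i\Psi(2^i)a_{i-1}^{-\alpha/N}a_i$, and finally invokes the combinatorial \cite[Lemma~5]{Savin-Valdinoci} to reach $\sum_i\Psi(2^i)a_i^{1/r^*}$. You bypass both the rearrangement step and the external lemma: you iterate the single per--level bound $E(u)\ge c\,\Psi(2^k)A_{k+1}A_k^{-\theta}$ to extract the weak--type estimate $\Psi(2^k)^{r^*}A_k\le(C_1E(u))^{r^*}$, then use the dyadic truncations $u_k=(u-2^k)_+\wedge2^k$ together with the superadditivity of $\Psi$ to obtain the \emph{summed} per--level inequality $E(u)\ge c\sum_k\Psi(2^k)A_{k+1}A_k^{-\theta}$, and close by reinserting the weak bound, the exponents matching precisely because $1+\theta r^*=r^*$. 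Your route is more self--contained and makes the weak--to--strong upgrade mechanism explicit; the paper's route is shorter once the cited lemma is granted, and the rearrangement there is a convenience for the presentation rather than a structural necessity.
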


\begin{proof}
We prove the inequality for $r=r^*$, and then the result for $r<r^*$ follows by H\"older inequality.     We can assume, without loss of generality, that $u$ is radially deacreasing and $\Omega=B_{R^*}$, since substituting $u$ by its symmetric decreasing rearrangement $u^*$, we have by Theorem~\ref{th-symmet},

    \begin{equation*}
        \|\Psi(u)\|_{r}=\|\Psi(u^*)\|_{r}\leq  C E(u^*) \leq CE(u).
    \end{equation*}
We may also consider the case of $u$ bounded, since if not, taking the sequence $u_T=\min\{u,T\}$, and thanks to the Dominated Convergence Theorem, we would get the result in the limit $T\to\infty$.
We now define
    \begin{equation*}\label{R_k}
    \begin{array}{ll}
    A_k:=\{x\in\mathbb{R}^N\,:\,u(x)>2^k\},&\quad a_k=|A_k|, \\ [3mm]
    D_k:=A_k\setminus A_{k+1},&\quad d_k=|D_k|.
    \end{array}
    \end{equation*}
We have $A_k=B_{R_k}$, with  $R_{k+1}\leq R_k\leq R^*$. Also $a_k=d_k=0$ for all large $k$, say for $k> M$.
Now we compute,
\begin{equation*}
    \|\Psi(u)\|_{r}= \left(\sum_{k=-\infty}^M\int_{D_k}\Psi^r(u(x))\,dx\right)^{1/r}\leq \sum_{k=-\infty}^M\Psi(2^{k+1})d_k^{1/r}\leq c\sum_{k=-\infty}^M\Psi(2^{k})a_k^{1/r},
\end{equation*}
since $r>1$. On the other hand, if $x\in D_i$ and $y\in D_j$, with $j\le i-2$, then $$|u(x)-u(y)|\ge 2^i-2^{j+1}\ge2^{i-1}.$$
Thus
$$
\begin{array}{l}
\displaystyle\sum_{i=-\infty}^M\sum_{j=-\infty}^{i-2}\int_{D_i}\int_{D_j}\Psi(u(x)-u(y))J(x-y)\,dydx\\ [4mm]
\qquad\qquad\displaystyle
\ge
\sum_{i=-\infty}^M\Psi(2^{i-1})\int_{D_i}\sum_{j\le i-2}\int_{D_j}J(x-y)\,dydx \\ [4mm]
\qquad\qquad\displaystyle\ge
\sum_{i=-\infty}^M\Psi(2^{i-1})\int_{D_i}\int_{A^c_{i-1}}J(x-y)\,dydx \ge
c\sum_{i=-\infty}^M\Psi(2^{i})a_{i-1}^{-\alpha/N}d_i\\ [4mm]
\qquad\qquad\displaystyle=
c\sum_{i=-\infty}^M\Psi(2^{i})a_{i-1}^{-\alpha/N}\left(a_i-
\sum_{k=i+1}^{M}d_k\right)=c(A-B).
\end{array}
$$
The second term can be estimated as
$$
\begin{array}{rl}
B&=\displaystyle
\sum_{i=-\infty}^M\sum_{k=i+1}^{M}\Psi(2^{i})a_{i-1}^{-\alpha/N}d_k  =\sum_{k=-\infty}^{M}\sum_{i=-\infty}^{k-1}\Psi(2^{i})a_{i-1}^{-\alpha/N}d_k\\ [4mm]
&\displaystyle\le
\sum_{k=-\infty}^{M}\sum_{i=-\infty}^{k-1}\Psi(2^{i})a_{k-1}^{-\alpha/N}d_k\le
\sum_{k=-\infty}^{M}\Psi(2^{k})a_{k-1}^{-\alpha/N}d_k\sum_{i=-\infty}^{k-1}\gamma^+(2^{i-k})\\ [4mm]
&\displaystyle=
\sum_{k=-\infty}^{M}\Psi(2^{k})a_{k-1}^{-\alpha/N}d_k\sum_{m=1}^\infty\gamma^+(2^{-m})
=c\sum_{k=-\infty}^{M}\Psi(2^{k})a_{k-1}^{-\alpha/N}d_k=c(A-B).
\end{array}
$$
We deduce the estimate
$$
\begin{array}{rl}
E(u)&\displaystyle
\ge\sum_{i=-\infty}^M\sum_{j=-\infty}^{i-2}\int_{D_i}\int_{D_j}\Psi(u(x)-u(y))J(x-y)\,dydx \\ [4mm]
&\displaystyle\ge CA=C\sum_{i=-\infty}^M\Psi(2^{i})a_{i-1}^{-\alpha/N}a_i.
\end{array}$$
We conclude, using \cite[Lemma 5]{Savin-Valdinoci}, since $\frac 1r=1-\frac\alpha N$,
 $$
E(u)\ge C\sum_{i=-\infty}^M\Psi(2^{i})a_{i}^{1-\alpha/N}\ge C\|\Psi(u)\|_{r}.
$$
\qed
\end{proof}

We also prove that the above embedding is compact provided $r<r^*$. To this end we first show the compactness of the inclusion for $r=1$ and then interpolate with the continuity  for $r=r^*$. It is important to remark that the inclusion ${W}^{J,\Psi}_0(\Omega)\hookrightarrow L^{\Psi}(\Omega)$ is compact even when $q_*=0$, which implies $r^*=1$, provided the following conditions on the kernel at the origin hold
\begin{equation}
\label{singular}
\lim_{|z|\to0^+}|z|^{N}J(z)=\infty,
\end{equation}
\begin{equation}
\label{regular-v}
J(z_1)\ge cJ(z_2)\qquad \text{for every } 0<|z_1|\le |z_2|\le1, \text{ and some } c>0.
\end{equation}
This implies some kind of minimal singularity and some monotonicity near the origin. In particular this allows to consider for instance a kernel of the form $J(z)=|z|^{-N}\left|\log|z|\right|^{\beta}$, $\beta>0$, for $|z|\sim0$. See~\cite{CdP} for the case $\Psi(s)=|s|^2$.

\begin{theorem}\label{th-compact}
    Assume $J$ satisfies  \eqref{singular} and \eqref{regular-v}. Then the embedding ${W}^{J,\Psi}_0(\Omega)\hookrightarrow L^{\Psi}(\Omega)$ is compact.
\end{theorem}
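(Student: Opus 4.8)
The plan is to verify a Kolmogorov--Riesz--Fr\'echet compactness criterion in the Orlicz space $L^\Psi(\Omega)$; the only nontrivial ingredient is a \emph{uniform modular equicontinuity of translations}, everything else being the classical scheme. Let $\{u_n\}$ be bounded in ${W}^{J,\Psi}_0(\Omega)$; from Lemma~\ref{lem-equiv-norms}, estimate~\eqref{equiv-norms2}, and the inclusions $L^\Psi(\Omega)\subset L^q(\Omega)\subset L^1(\Omega)$ (recall $\Omega$ is bounded and $q>1$), we may assume $E(u_n)\le C$, $F(u_n)\le C$ and $\|u_n\|_{L^1(\Omega)}\le C$, each $u_n$ vanishing outside the bounded set $\Omega$. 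Set
$$
g_n(h)=\int_{\mathbb{R}^N}\Psi\big(u_n(x)-u_n(x+h)\big)\,dx ,
$$
a nonnegative even function satisfying $E(u_n)=\tfrac12\int_{\mathbb{R}^N}g_n(z)J(z)\,dz$. The goal of the first, and main, step is to show that $\eta(\rho):=\sup_n\sup_{|h|\le\rho}g_n(h)\to0$ as $\rho\to0^+$.

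For this I would first prove the self-improving inequality
$$
\sup_{|h|\le\rho}g_n(h)\ \le\ \frac{2^{p}}{|B_\rho|}\int_{B_{2\rho}}g_n(z)\,dz ,\qquad\text{for all }\rho>0\text{ and all }n .
$$
Indeed, splitting $u_n(x)-u_n(x+h)=\big(u_n(x)-u_n(x+w)\big)+\big(u_n(x+w)-u_n(x+h)\big)$, using convexity of $\Psi$ and $\gamma_\Psi^+(2)\le 2^p$ (Lemma~\ref{gammas}) one gets
$$
\Psi\big(u_n(x)-u_n(x+h)\big)\le 2^{p-1}\Big[\Psi\big(u_n(x)-u_n(x+w)\big)+\Psi\big(u_n(x+w)-u_n(x+h)\big)\Big];
$$
averaging in $w\in B_\rho$ and integrating in $x$ (substituting $y=x+w$ in the second term) turns the right-hand side into $2^{p-1}|B_\rho|^{-1}\big(\int_{B_\rho}g_n+\int_{h+B_\rho}g_n\big)$, and both balls lie in $B_{2\rho}$ since $|h|\le\rho$. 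Next, for $2\rho\le1$ the monotonicity~\eqref{regular-v} yields $J(z)\ge c\,\underline J(2\rho)$ for a.e.\ $|z|\le2\rho$, with $\underline J(t):=\inf_{|z|=t}J(z)$; hence
$$
\int_{B_{2\rho}}g_n(z)\,dz\ \le\ \frac{1}{c\,\underline J(2\rho)}\int_{B_{2\rho}}g_n(z)J(z)\,dz\ \le\ \frac{2E(u_n)}{c\,\underline J(2\rho)}\ \le\ \frac{2C}{c\,\underline J(2\rho)} .
$$
Combining the two displays gives $\eta(\rho)\le 2^{p+1}C\,\big(c\,\omega_N\,\rho^{N}\underline J(2\rho)\big)^{-1}$, and since $\rho^{N}\underline J(2\rho)=2^{-N}(2\rho)^{N}\underline J(2\rho)\to\infty$ by~\eqref{singular}, we conclude $\eta(\rho)\to0$.

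With the uniform bound $\eta(\rho)\to0$ in hand I would close the argument by the mollification method adapted to Orlicz spaces. Let $\rho_\varepsilon$ be a standard mollifier supported in $B_\varepsilon$; from $u_n-u_n*\rho_\varepsilon=\int\rho_\varepsilon(y)\big(u_n(\cdot)-u_n(\cdot-y)\big)\,dy$, Jensen's inequality for the probability measure $\rho_\varepsilon\,dy$ (convexity of $\Psi$) and Fubini give $F(u_n-u_n*\rho_\varepsilon)\le\int\rho_\varepsilon(y)g_n(y)\,dy\le\eta(\varepsilon)$, whence, by Lemma~\ref{lem-equiv-norms} and $\gamma_\Psi^-(s)\ge\min\{s^p,s^q\}$, $\|u_n-u_n*\rho_\varepsilon\|_{L^\Psi}\le\omega(\varepsilon)$ with $\omega(\varepsilon)\to0$ uniformly in $n$. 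For fixed $\varepsilon$, the family $\{u_n*\rho_\varepsilon\}_n$ is supported in a fixed compact set and is uniformly bounded and uniformly Lipschitz, because $\|u_n*\rho_\varepsilon\|_\infty\le\|\rho_\varepsilon\|_\infty\|u_n\|_{L^1(\Omega)}$ and $\|\nabla(u_n*\rho_\varepsilon)\|_\infty\le\|\nabla\rho_\varepsilon\|_\infty\|u_n\|_{L^1(\Omega)}$; by Arzel\`a--Ascoli it is relatively compact in $C^0$, hence in $L^\Psi(\Omega)$ since $\Psi$ is finite and continuous with $\Psi(0)=0$. A finite $\varepsilon$-net argument (approximate $\{u_n*\rho_\varepsilon\}_n$ by a finite net in $L^\Psi(\Omega)$ and use that $\omega(\varepsilon)$ is small) then shows that $\{u_n\}$ is totally bounded, hence relatively compact, in $L^\Psi(\Omega)$. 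The main obstacle is precisely the uniform modular equicontinuity $\eta(\rho)\to0$, that is, the two structural inequalities of the second paragraph together with the correct exploitation of~\eqref{singular}--\eqref{regular-v}; granting it, the mollification/Fr\'echet--Kolmogorov part is routine.
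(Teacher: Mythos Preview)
Your proof is correct and follows a genuinely different route from the paper's. Both arguments belong to the Riesz--Fr\'echet--Kolmogorov family, but they implement it differently.

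The paper, following the discretization scheme of Di~Nezza--Palatucci--Valdinoci, partitions $\Omega$ into cubes $Q_j$ of side $\rho$, replaces $u$ by its piecewise constant average $Q(u)$, and estimates the modular directly:
$$
F(u-Q(u))\le\frac{1}{\rho^N}\sum_j\int_{Q_j}\!\int_{Q_j}\Psi(u(x)-u(y))\,dy\,dx\le\frac{c}{\rho^N J(\rho)}\,E(u),
$$
using~\eqref{regular-v} to compare $J(x-y)$ with $J(\rho)$ inside each cube; then~\eqref{singular} makes $\rho^N J(\rho)\to\infty$, and finite-dimensionality of the piecewise constant space gives the $\varepsilon$-net. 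Your route instead isolates the uniform modular equicontinuity of translations $\eta(\rho)=\sup_n\sup_{|h|\le\rho}g_n(h)\to0$ via the neat self-improving averaging inequality, and closes with mollification plus Arzel\`a--Ascoli. The core use of the hypotheses is essentially the same: \eqref{regular-v} provides the lower bound $J(z)\ge c\,\underline J(2\rho)$ on small balls, and \eqref{singular} forces $\rho^N\underline J(2\rho)\to\infty$. What you gain is a cleaner quantitative statement (the translation estimate $\eta(\rho)\lesssim(\rho^N\underline J(2\rho))^{-1}$) that could be reused elsewhere, and an argument that avoids the somewhat ad hoc map $S:\mathcal A\to\mathbb R^{M'}$ the paper builds; what the paper gains is a slightly more self-contained presentation that does not invoke mollifiers or Arzel\`a--Ascoli.
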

\begin{proof}The idea of the proof goes back to the Riesz-Fr\'echet-Kolmogorov work. We follow the adaptation to the fractional Laplacian framework performed in \cite{hitch}.

Let $\mathcal{A}\subset{{W}^{J,\Psi}_0}(\Omega)$ be a bounded set. We show that $\mathcal{A}$ is totally bounded in $L^{\Psi}(\Omega),$ i.e., for any $\epsilon \in (0,1)$ there exist $\beta_1,...,\beta_M \in L^{\Psi}(B_1)$ such that for any $u\in \mathcal{A}$ there exists $j\in\{1,...,M\}$ such that
\begin{equation}\label{H131}
F(u-\beta_j)\leq \epsilon.
\end{equation}
We take a collection of disjoints cubes $Q_1,...Q_{M'}$ of side $\rho<1$ such that $\Omega=\bigcup_{j=1}^{M'} Q_j$.
For any $x\in \Omega$ we define $j(x)$ as the unique integer in $\{1,...,M'\}$ for which $x\in Q_{j(x)}$.
Also, for any $u \in \mathcal{A},$ let
$$
Q(u)(x):=\frac{1}{|Q_{j(x)}|}\int_{Q_{j(x)}}u(y)\,dy.
$$
Notice that
$$
Q(u+v)=Q(u)+Q(v) \;\mbox{for any}\; u,v\in \mathcal{A},
$$
and that $Q(u)$ is constant, say equal to $q_j(u)$, in any $Q_j$, for $j\in\{1,...,M'\}$.
Therefore, we can define
$$
S(u):=\rho^{N}\left(\Psi(q_1(u)),...,\Psi(q_{M'}(u))\right)\in \R^{M'},
$$
and consider the spatial $1$-norm in $\R^{M'}$ as
$$
\|v\|_1:=\displaystyle\sum_{j=1}^{M'}|y_j|, \qquad\mbox{ for any } v=(y_1,\dots,y_{M'})\in \R^{M'}.
$$
We observe that
\begin{equation}\label{cota_norm_P}
\begin{array}{rl}
\displaystyle F(Q(u))&\displaystyle=
\sum_{j=1}^{M'}\int_{Q_j}\Psi\left(Q(u)(x)\right)\,dx\displaystyle\leq \rho^N\sum_{j=1}^{M'}\Psi\left(q_j(u)\right)\\
&\leq\displaystyle \rho^N\sum_{j=1}^{M'}\Psi\left(q_j(u)\right)
=\|S(u)\|_1,
\end{array}
\end{equation}
and also, by Jensen inequality and \eqref{poincare1},
\begin{equation}\label{S(u)}
\begin{array}{rl}
\|S(u)\|_1&\displaystyle=\sum_{j=1}^{M'}\rho^N\left|\Psi(q_j(u))\right|=\rho^N\sum_{j=1}^{M'}\Psi\left(\frac{1}{\rho^N}\int_{Q_j}u(y)\,dy\right)\\
&\displaystyle\leq \sum_{j=1}^{M'}\int_{Q_j}\Psi(u(y))\,dy=\int_{\Omega}\Psi(u(y))\,dy
\leq c.
\end{array}
\end{equation}
In the same way,
$$
F(Q(u)-a)=F(Q(u)-Q(a))\le \|S(u)-S(a))\|_1
$$
for every constant $a$.
In particular from \eqref{S(u)} we obtain that the set $S(\mathcal{A})$ is bounded in $\R^{M'}$ and so, since it is finite dimensional, it is totally bounded. Therefore, there exist $b_1,...,b_K\in\R^{M'}$ such that
\begin{equation}\label{H134}
S(\mathcal{A})\subset\bigcup_{i=1}^{K}B_{\eta}(b_i),
\end{equation}
where $B_{\eta}(b_i)$ are the $1$--balls of radius $\eta$ centered at $b_i$.
For any $i\in\{1,...,K\}$, we write the coordinates of $b_i$ as $b_i=(b_{i,1},...,b_{i,M'})\in \R^{M'}$. For any $x\in \Omega$ we set
$$
\beta_i(x)=\Psi^{-1}(\rho^{-N}b_{i,j(x)}),
$$
where $j(x)$ is as above.
Notice that $\beta_i$ is constant on $Q_j$, i.e. if $x\in Q_j$ then
\begin{equation}\label{constant}
Q(\beta_i)(x)=\Psi^{-1}(\rho^{-N}b_{i,j(x)})=\Psi^{-1}(\rho^{-N}b_{i,j})=\beta_{i}(x)
\end{equation}
and so $q_j(\beta_i)=\Psi^{-1}(\rho^{-N}b_{i,j})$; thus $S(\beta_i)=b_i$.
Furthermore,  again by Jensen inequality,

\begin{equation*}
\begin{array}{rl}
\displaystyle F(u-Q(u))&
= \displaystyle\sum_{j=1}^{M'}\int_{Q_j}\Psi\left(u(x)-Q(u)(x)\right)\,dx\\
&\displaystyle=\sum_{j=1}^{M'}\int_{Q_j}\Psi\left(\frac{1}{\rho^N}\int_{Q_j}\left(u(x)-u(y)\right)\,dy\right)\,dx \\
&\displaystyle\leq \frac{1}{\rho^N}\displaystyle\sum_{j=1}^{M'}\int_{Q_j}\int_{Q_j}\Psi\left(u(x)-u(y)\right)\,dy\,dx\\
&\displaystyle\leq\frac{1}{\ell(\rho)}\sum_{j=1}^{M'}\int_{Q_j}\int_{Q_j}\Psi\left(u(x)-u(y)\right)J(x-y)\,dy\,dx \leq\frac{c}{\ell(\rho)},
\end{array}
\end{equation*}
where $\ell(\rho)=\rho^NJ(\rho)$, using \eqref{regular-v}.
Consequently, for any $j\in\{1,...,K\}$, recalling \eqref{cota_norm_P} and \eqref{constant}
\begin{equation*}\label{H137}
\begin{array}{rl}
F(u-\beta_j)&\displaystyle\leq F(u-Q(u))+F(Q(u)-Q(\beta_j))+
F(Q(\beta_j)-\beta_j)\\ [3mm]
&\displaystyle\leq c\left(\frac{1}{\ell(\rho)}+
\|S(u)-S(\beta_j))\|_1\right).
\end{array}
\end{equation*}
Now recalling \eqref{H134}   we take $j\in \{1,...,K\}$ such that $S(u)\in B_{\eta}(b_j)$, that is
$$
\|S(u)-S(\beta_j))\|_1=\|S(u)-b_j\|_1<\eta.
$$
We conclude by choosing $\rho$ and $\eta$ small, thanks to \eqref{singular}, so as to have $c\left(\frac{1}{\ell(\rho)}+\eta\right)<\epsilon.$
\qed\end{proof}

As a corollary we obtain the full compactness result in the fractional case.
\begin{theorem}\label{th-compact2}
    Assume $J$ satisfies  \eqref{alpha} and \eqref{regular-v}. Then the embedding ${W}^{J,\Psi}_0(\Omega)\hookrightarrow L^{\Psi^r}(\Omega)$ is compact for every $1\le r<r^*$ if $\alpha<N$, for every $1\le r<\infty$ if $\alpha\ge N$.
\end{theorem}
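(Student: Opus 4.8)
The strategy is the classical one: interpolate a compact embedding at one endpoint with a bounded embedding at the other. We already have from Theorem~\ref{th-Sobolev} the \emph{continuous} embedding ${W}^{J,\Psi}_0(\Omega)\hookrightarrow L^{\Psi^{r^*}}(\Omega)$ (when $\alpha<N$; when $\alpha\ge N$ we simply replace $\alpha$ throughout by some $\alpha'<N$ close to $N$, which is legitimate since \eqref{alpha} holds with that $\alpha'$ too, and $r^*$ can be taken arbitrarily large). The missing ingredient is a \emph{compact} embedding at the lower endpoint $r=1$, i.e. ${W}^{J,\Psi}_0(\Omega)\hookrightarrow L^{\Psi}(\Omega)$. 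But condition \eqref{alpha} with $\alpha>0$ immediately gives \eqref{singular}, since $|z|^N J(z)\ge c|z|^{-\alpha}\to\infty$ as $|z|\to 0^+$; and \eqref{regular-v} is assumed. Hence Theorem~\ref{th-compact} applies verbatim and yields the compactness of ${W}^{J,\Psi}_0(\Omega)\hookrightarrow L^{\Psi}(\Omega)$.

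First I would fix $1\le r<r^*$ and choose $\theta\in(0,1)$ with $\frac1r=\theta+\frac{1-\theta}{r^*}$, i.e. $\theta=\frac{r^*-r}{r(r^*-1)}\in(0,1]$; note $\theta<1$ precisely because $r<r^*$ (if $\alpha\ge N$ any finite $r$ works since $r^*=\infty$). Then I would take a bounded sequence $\{u_n\}\subset{W}^{J,\Psi}_0(\Omega)$. By Theorem~\ref{th-compact}, after passing to a subsequence, $\Psi(u_n)\to \Psi(u)$ in $L^1(\Omega)$ for some limit $u$ (one may pass to a further subsequence so that $u_n\to u$ a.e., which identifies the limit and shows $\Psi(u_n)\to\Psi(u)$ in $L^1$). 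On the other hand, by Theorem~\ref{th-Sobolev}, $\{\Psi(u_n)\}$ is bounded in $L^{r^*}(\Omega)$. Interpolation of $L^p$ norms gives
\begin{equation*}
\|\Psi(u_n)-\Psi(u)\|_{r}\le \|\Psi(u_n)-\Psi(u)\|_1^{\theta}\,\|\Psi(u_n)-\Psi(u)\|_{r^*}^{1-\theta}\le C\,\|\Psi(u_n)-\Psi(u)\|_1^{\theta}\longrightarrow 0,
\end{equation*}
where $C$ bounds the (uniformly bounded) $L^{r^*}$ norms of $\Psi(u_n)$ and of $\Psi(u)$, the latter being finite by the continuous embedding applied to the limit. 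Thus $\Psi(u_n)\to\Psi(u)$ in $L^r(\Omega)$, i.e. $u_n\to u$ in $L^{\Psi^r}(\Omega)$, proving compactness.

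The only point requiring a little care is translating convergence of $\Psi(u_n)$ into convergence in the Orlicz-type quantity that defines $L^{\Psi^r}(\Omega)$: by definition $v\in L^{\Psi^r}(\Omega)$ means $\Psi(v)\in L^r(\Omega)$, and one checks that a bound on $\|\Psi(u_n)-\Psi(u)\|_r$ controls the relevant Luxemburg-type norm (using the doubling properties of $\Psi$ from $\Gamma_{p,q}$, Lemma~\ref{gammas}), so that $L^r$-convergence of $\Psi(u_n)$ is exactly convergence in $L^{\Psi^r}(\Omega)$. This is a routine consequence of the $\Delta_2$ condition built into $\Gamma_{p,q}$ and needs no new idea. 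The genuine content of the theorem is thus entirely carried by Theorem~\ref{th-compact} (for the compact endpoint) and Theorem~\ref{th-Sobolev} (for the bounded endpoint), with the observation that \eqref{alpha} forces \eqref{singular}; the interpolation step is standard. Accordingly, I expect no serious obstacle, the main subtlety being merely the bookkeeping of which $\alpha<N$ to use in the case $\alpha\ge N$.
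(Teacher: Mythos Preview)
Your overall strategy---interpolate the compact embedding at $r=1$ (from Theorem~\ref{th-compact}, noting that \eqref{alpha} implies \eqref{singular}) against the continuous embedding at $r=r^*$ (from Theorem~\ref{th-Sobolev})---is exactly the paper's approach. The paper packages it via the totally bounded criterion with the step functions $\beta_j$, you package it via sequences; these are equivalent.

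There is, however, a genuine slip in your execution. You interpolate the quantity $\Psi(u_n)-\Psi(u)$ and then claim that a bound on $\|\Psi(u_n)-\Psi(u)\|_r$ controls the Luxemburg norm in $L^{\Psi^r}$, i.e.\ controls $\int_\Omega\Psi^r(u_n-u)$. This last claim is false: take $u_n\equiv 1$, $u\equiv -1$ on a set of positive measure; since $\Psi$ is even, $\Psi(u_n)-\Psi(u)=0$ there, while $\Psi(u_n-u)=\Psi(2)>0$. No doubling property rescues this. The fix is immediate and is precisely what the paper does: interpolate $\Psi(u_n-u)$ itself,
\[
\|\Psi(u_n-u)\|_r\le \|\Psi(u_n-u)\|_1^{\theta}\,\|\Psi(u_n-u)\|_{r^*}^{1-\theta}.
\]
The first factor is $F(u_n-u)$, which tends to $0$ by Theorem~\ref{th-compact} (convergence in $L^\Psi$ is, under $\Delta_2$, exactly $F(u_n-u)\to0$). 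The second factor is bounded because $\Psi(u_n-u)\le c\big(\Psi(u_n)+\Psi(u)\big)$ by the doubling estimate in Lemma~\ref{gammas}, and both terms are bounded in $L^{r^*}$ by Theorem~\ref{th-Sobolev}. This gives $\int_\Omega\Psi^r(u_n-u)\to0$ directly, with no need for the (incorrect) translation step you flagged at the end.
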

\begin{proof}
As before if $\alpha\ge N$  we obtain the result substituting $\alpha$ by any number below $N$. By  classical interpolation
$$
\|\Psi(u)\|_r\le \|\Psi(u)\|_1^\lambda\|\Psi(u)\|_{r^*}^{1-\lambda}\le cF(u)^\lambda,
$$
where $\frac1r=\lambda+\frac{1-\lambda}{r^*}$. Therefore we can obtain, instead of \eqref{H131}, the estimate
$$
\int_\Omega\Psi^r(u(x)-\beta_j)\,dx\le c\epsilon^{\lambda r},
$$
and we are done.
\qed\end{proof}

\section{The  problem with reaction $f=f(x)$}\label{sect-linear}
\setcounter{equation}{0}

We start with this section the study of some elliptic type problems associated to our nonlinear nonlocal operator $\mathcal{L}$.

Here we consider the problem
\begin{equation}\label{linearproblem}
\begin{cases}
\mathcal{L}u=f(x),& \mbox{in }\Omega,\\ u=0,& \mbox{in }\Omega^c.
\end{cases}
\end{equation}
Given any $f\in \left({W}^{J,\Psi}_0(\Omega)\right)'$, the dual space, we say that $u\in {W}^{J,\Psi}_0(\Omega)$ is a weak solution to \eqref{linearproblem} if \eqref{weaksol} holds.

By  Poincar\'e inequality \eqref{poincare1} we have that $f\in \left({W}^{J,\Psi}_0(\Omega)\right)'$ for instance provided $f\in L^\Phi(\Omega)$, where $\Phi$ is the complementary function of $\Psi$.

We next show that problem \eqref{linearproblem} has a weak solution. We do not know if this solution is a strong solution, that is if $\mathcal{L}u$ is defined pointwise and the equality in~\eqref{linearproblem} holds almost everywhere. On the other hand, we are able to show uniqueness assuming some extra conditions on the function $\Psi$. In the exact power case $\Psi(s)=|s|^{p}$ these extra conditions cover the full range $p>1$.

\begin{theorem}\label{th-existence-linear}
For    any $f\in \left({W}^{J,\Psi}_0(\Omega)\right)'$ there exists a  solution $u\in {W}^{J,\Psi}_0(\Omega)$ to problem \eqref{linearproblem}. If $\psi$ satisfies either condition~\eqref{raizconvex} or~\eqref{concavemas} then the solution is unique.
\end{theorem}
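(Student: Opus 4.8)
The proof splits into two parts: existence via a variational argument, and uniqueness via monotonicity of $\mathcal{E}$ together with the Clarkson-type inequalities of Lemma~\ref{lem-clarkson}. For existence, I would realize the weak solution as a minimizer of the functional
\[
\mathcal{F}(u)=E(u)-\langle f,u\rangle,\qquad u\in{W}^{J,\Psi}_0(\Omega).
\]
Since $\Psi\in\Gamma_{p,q}$ is convex, $E$ is convex, and the relation $\langle E'(u),\varphi\rangle=\mathcal{E}(u;\varphi)$ recorded in the introduction shows that a minimizer of $\mathcal{F}$ is exactly a weak solution of~\eqref{linearproblem}. So it suffices to check that $\mathcal{F}$ is coercive and weakly lower semicontinuous on the reflexive Banach space ${W}^{J,\Psi}_0(\Omega)$, and then invoke the direct method.

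\textbf{Existence.} For coercivity, combine the Poincar\'e inequality~\eqref{poincare1}, $E(u)\ge AF(u)$, with the norm equivalence~\eqref{equiv-norms2}: if $[u]_{W^{J,\Psi}}=t$ then $E(u)\ge\gamma_\Psi^-(t)\ge\min\{t^p,t^q\}$ by Lemma~\ref{gammas}, which tends to $+\infty$ as $t\to\infty$; meanwhile $|\langle f,u\rangle|\le\|f\|_{*}\,\|u\|_{W^{J,\Psi}}$, and by Poincar\'e the full norm $\|u\|_{W^{J,\Psi}}$ is controlled by $[u]_{W^{J,\Psi}}$. Hence $\mathcal{F}(u)\to\infty$ as $\|u\|_{W^{J,\Psi}}\to\infty$, so a minimizing sequence $\{u_n\}$ is bounded and, by reflexivity, has a subsequence $u_n\rightharpoonup u$ weakly. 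For lower semicontinuity: the linear term is weakly continuous since $f$ is a bounded linear functional; for $E$, use the isometry $T u=[u,w]$ with $w(x,y)=u(x)-u(y)$ into $M=L^\Psi(\mathbb{R}^N)\times L^\Psi(\mathbb{R}^{2N},J)$ from Section~\ref{sect-prelim}, under which $E(u)=\tfrac12\iint\Psi(w)J$, a convex functional on the second factor; convex functionals on Orlicz spaces are weakly l.s.c., and $T$ is weak-to-weak continuous because it is a bounded linear operator. Thus $E(u)\le\liminf E(u_n)$, so $\mathcal{F}(u)\le\liminf\mathcal{F}(u_n)=\inf\mathcal{F}$, and $u$ is the desired minimizer, hence a weak solution.

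\textbf{Uniqueness.} Suppose $u_1,u_2$ are two weak solutions. Subtracting the two weak formulations and testing with $\varphi=u_1-u_2\in{W}^{J,\Psi}_0(\Omega)$ gives
\[
\mathcal{E}(u_1;u_1-u_2)-\mathcal{E}(u_2;u_1-u_2)=\tfrac12\iint_{\mathbb{R}^{2N}}\big(\psi(\xi_1)-\psi(\xi_2)\big)(\xi_1-\xi_2)\,J(x-y)\,dxdy=0,
\]
where $\xi_i=u_i(x)-u_i(y)$. If $\psi$ satisfies~\eqref{raizconvex}, the integrand is bounded below by $4\Psi\big(\tfrac{\xi_1-\xi_2}{2}\big)\ge0$ pointwise by~\eqref{clarkson1}, and if $\psi$ satisfies~\eqref{concavemas} it is bounded below by $c_1|\xi_1-\xi_2|^{p}\ge0$ directly from the monotonicity estimate $(\psi(a)-\psi(b))(a-b)\ge c|a-b|^p$ implied by~\eqref{concavemas} (this is the elementary inequality underlying part $iii)$; one does not even need~\eqref{clarkson3} here). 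In either case the vanishing of a nonnegative integral forces $\xi_1=\xi_2$ for $J(x-y)\,dxdy$-a.e.\ pair, i.e.\ $u_1(x)-u_2(x)=u_1(y)-u_2(y)$ for a.e.\ $x,y$; since $u_1-u_2\equiv0$ on $\Omega^c$ and $J>0$, this constant must be $0$, so $u_1=u_2$.

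\textbf{Main obstacle.} The delicate point is not existence — that is a routine direct-method argument once coercivity and weak lower semicontinuity are in place — but making the uniqueness argument cover the advertised full range $p>1$ in the exact power case. For $p\ge2$ one uses~\eqref{clarkson1} (note $\Psi(s)=|s|^p$ satisfies~\eqref{raizconvex} iff $p\ge1$, so this branch handles $p\ge1$ automatically), while for $1<p<2$ one invokes the concave branch~\eqref{concavemas}; the two hypotheses~\eqref{raizconvex} and~\eqref{concavemas} in the statement are precisely tailored so that between them every power is covered. The only real care needed is to verify that the pointwise lower bounds from Lemma~\ref{lem-clarkson} are genuinely nonnegative and that a.e.\ equality of differences plus the zero boundary condition propagates to $u_1\equiv u_2$ on all of $\mathbb{R}^N$, which is immediate from $J>0$ and $J\notin L^1(B_1)$.
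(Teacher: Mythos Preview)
Your existence argument is correct and matches the paper's: coercivity from the modular--norm comparison plus Poincar\'e, weak lower semicontinuity from convexity of $E$, then the direct method on the reflexive space ${W}^{J,\Psi}_0(\Omega)$. Your uniqueness argument under~\eqref{raizconvex} is also the paper's.

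There is, however, a genuine error in the~\eqref{concavemas} branch. You assert that $(\psi(a)-\psi(b))(a-b)\ge c|a-b|^p$ follows from~\eqref{concavemas}, but for $1<p<2$ this is false: take $\psi(s)=|s|^{p-2}s$, $a=1+\varepsilon$, $b=1$; the left side is $\sim(p-1)\varepsilon^2$ while the right side is $c\varepsilon^p$, and $\varepsilon^{2-p}\to0$. Relatedly, your side remark that $\Psi(s)=|s|^p$ satisfies~\eqref{raizconvex} for all $p\ge1$ is also wrong: since $s\psi'(s)/\psi(s)=p-1$, condition~\eqref{raizconvex} holds precisely when $p\ge2$, so the range $1<p<2$ genuinely requires a separate argument.

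The paper handles~\eqref{concavemas} differently: it keeps the degenerate weight in~\eqref{clarkson3} and combines it with H\"older's inequality (exponents $2/p$ and $2/(2-p)$) to obtain
\[
E(u_1-u_2)\le c\big(\mathcal{E}(u_1;u_1{-}u_2)-\mathcal{E}(u_2;u_1{-}u_2)\big)^{p/2}\big(E(u_1)+E(u_2)\big)^{1-p/2}=0.
\]
Alternatively, your line of reasoning can be rescued more cheaply: under~\eqref{concavemas} one has $\psi'(s)\ge c_1|s|^{p-2}>0$ for $s\ne0$, so $\psi$ is strictly increasing and $(\psi(a)-\psi(b))(a-b)>0$ whenever $a\ne b$; the vanishing of the nonnegative integral then still forces $\xi_1=\xi_2$ a.e., and the rest of your argument goes through. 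Either fix closes the gap.
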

\begin{proof}
Existence follows by minimizing in ${W}^{J,\Psi}_0(\Omega)$ the functional
\begin{equation*}
  \label{functional-f}
  I(v)=E(v)-\int_\Omega fv.
\end{equation*}
Clearly it is well defined, lower semicontinuous and Fr\'echet differentiable with
$$
\langle I'(v),\varphi\rangle=\mathcal{E}(v;\varphi)-\int_\Omega f\varphi
$$
for every $v,\,\varphi\in {W}^{J,\Psi}_0(\Omega)$. To see that it is coercive we first observe that $\|v\|_{W^{J,\Psi}}\to\infty$ implies $E(v)\to\infty$. Actually, by~\eqref{equiv-norms}, \eqref{equiv-norms2} and Poincar\'e inequality,
$$
\begin{array}{rl}
\|v\|_{W^{J,\Psi}}&\displaystyle= \|v\|_{{L}^{\Psi}}+[v]_{W^{J,\Psi}}\le
c\left((\gamma^-_\Psi)^{-1}(F(v))+(\gamma^-_\Psi)^{-1}(E(v))\right) \\ [3mm]
&\displaystyle\le c\max\left\{\left(E(v)\right)^{1/p},\, \left(E(v)\right)^{1/q}\right\}.
\end{array}$$
Now use H\"older inequality in Orlicz spaces,
$$
\left|\int_\Omega fv\right|\le c\|v\|_{{W}^{J,\Psi}_0},\qquad c=\sup_{\|w\|_{{W}^{J,\Psi}_0}=1}\left|\int_\Omega  fw\right|.
$$
The last quantity is known as the Orlicz norm of $f$ in $\left({W}^{J,\Psi}_0(\Omega)\right)'$, an is equivalent to the Luxemburg norm, see~\cite{Rao-Ren}. We thus get
$$
I(v)\ge E(v)-c\left(E(v)\right)^{1/q}\to\infty
$$
as $\|v\|_{W^{J,\Psi}}\to\infty$.
Therefore there exists a minimum of $I$, attained by compactness for some function $u\in {W}^{J,\Psi}_0(\Omega)$, which is a weak solution to our problem.

We now show uniqueness. Suppose by contradiction that there exist two functions $u_1,\,u_2\in  {W}^{J,\Psi}_0(\Omega)$ such that
\begin{equation}\label{iguales}
\mathcal{E}(u_1;\varphi)=\mathcal{E}(u_2;\varphi)\qquad \forall\;\varphi\in  {W}^{J,\Psi}_0(\Omega).
\end{equation}
Assume first that \eqref{raizconvex} holds.
We have, denoting $a=u_1(x)-u_1(y)$, $b=u_2(x)-u_2(y)$, and using \eqref{clarkson1},
$$
\begin{array}{rl}
E(u_1-u_2)&\displaystyle=\frac12\iint_{\mathbb{R}^{2N}}\Psi(a-b)J(x-y)\,dxdy \\ [4mm]
&\displaystyle\le c\iint_{\mathbb{R}^{2N}}\left(\psi(a)-\psi(b)\right)(a-b)J(x-y)\,dxdy \\ [4mm]
&\displaystyle= c\left(\mathcal{E}(u_1;u_1-u_2)-\mathcal{E}(u_2;u_1-u_2)\right)=0
\end{array}
$$
by \eqref{iguales}.
This implies $u_1\equiv u_2$.

Assume now condition \eqref{concavemas}. We calculate, using H\"older inequality and \eqref{clarkson3},
$$
\begin{array}{l}
E(u_1-u_2)\displaystyle=\frac12\iint_{\mathbb{R}^{2N}}\Psi(a-b)J(x-y)\,dxdy \\ [4mm]
\displaystyle\le c\left(\iint_{\mathbb{R}^{2N}}\frac{\left(\Psi(a-b)\right)^{2/p}}{(\Psi(a)+\Psi(b))^{\frac{2-p}p}}J(x-y)
\,dxdy\right)^{\frac p2}\hspace{-1mm} \left(\iint_{\mathbb{R}^{2N}}\left(\Psi(a)+\Psi(b)\right)J(x-y)\,dxdy\right)^{1-\frac p2} \\ [4mm]
\displaystyle\le c\left(\iint_{\mathbb{R}^{2N}}\left(\psi(a)-\psi(b)\right)(a-b)J(x-y)\,dxdy\right)^{\frac p2}\hspace{-1mm}
\left(\iint_{\mathbb{R}^{2N}}\left(\Psi(a)+\Psi(b)\right)J(x-y)\,dxdy\right)^{1-\frac p2}\\ [4mm]
\displaystyle= c\left(\mathcal{E}(u_1;u_1-u_2)-\mathcal{E}(u_2;u_1-u_2)\right)^{\frac p2}
\left(E(u_1)+E(u_2)\right)^{1-\frac p2}=0.
\end{array}
$$
\qed\end{proof}

A maximum principle is easy to obtain.

 \begin{prop}\label{maximum}
 	If $u\in\mathcal{H}^{J,\Psi}(\R^N)$ then
 $$
 \left.
 \begin{array}{l}\mathcal{E}(u, \varphi)\ge0 \quad \forall\;\varphi \in \mathcal{H}^{J,\Psi}(\R^N), \;\varphi\ge0 \\
 u\ge0 \;\text{ in } \Omega^c
 \end{array}
 \right\}
 \;\Rightarrow\;u\geq0\text{ in } \Omega.
 $$
 \end{prop}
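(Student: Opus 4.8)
The plan is to prove the maximum principle by testing the weak inequality against the negative part of $u$. Set $\varphi = u^- = \max\{-u,0\}$. Since $u \ge 0$ in $\Omega^c$, we have $u^- \equiv 0$ in $\Omega^c$, so $\varphi \in {W}_0^{J,\Psi}(\Omega)$ (the space denoted $\mathcal{H}^{J,\Psi}(\R^N)$ here is ${W}_0^{J,\Psi}(\Omega)$); moreover $\varphi \ge 0$, so the hypothesis gives $\mathcal{E}(u;u^-) \ge 0$. The goal is then to show $\mathcal{E}(u;u^-) \le -c\,E(u^-)$ for some $c>0$, which forces $E(u^-)=0$, hence by Poincar\'e inequality \eqref{poincare1} also $F(u^-)=0$, i.e. $u^-\equiv 0$ and $u\ge 0$ in $\Omega$.

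The key computation is the pointwise inequality
\[
\psi\big(u(x)-u(y)\big)\big(u^-(x)-u^-(y)\big)\le -\,\psi\big(u^-(x)-u^-(y)\big)\big(u^-(x)-u^-(y)\big),
\]
valid for all $x,y$ (using that $\psi$ is odd and nondecreasing). One checks the four sign cases: writing $u=u^+-u^-$, when both $u(x),u(y)\ge 0$ the left side is $0$ and so is the right side; when both are $\le 0$, $u(x)-u(y)=u^-(y)-u^-(x)=-(u^-(x)-u^-(y))$ and equality holds; in the mixed case, say $u(x)\ge 0\ge u(y)$, then $u(x)-u(y)\ge -u(y)=u^-(y)=u^-(y)-u^-(x)\ge 0$ and $u^-(x)-u^-(y)=-u^-(y)\le 0$, so $\psi(u(x)-u(y))(u^-(x)-u^-(y))\le \psi(u^-(y)-u^-(x))\cdot\big(-(u^-(y)-u^-(x))\big)$ because $\psi$ is nondecreasing and the argument $u(x)-u(y)$ dominates $u^-(y)-u^-(x)\ge 0$ while being multiplied by a nonpositive factor; the remaining case is symmetric. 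Integrating this against $J(x-y)\,dxdy$ and using oddness of $\psi$ yields
\[
\mathcal{E}(u;u^-)\le -\mathcal{E}(u^-;u^-)\le -q\,E(u^-),
\]
the last step by \eqref{equiv-Ee}.

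Combining $0\le \mathcal{E}(u;u^-)\le -q\,E(u^-)$ gives $E(u^-)=0$, and then the Poincar\'e inequality \eqref{poincare1} applied to $u^-\in {W}_0^{J,\Psi}(\Omega)$ gives $F(u^-)=0$, so $u^-=0$ a.e. in $\Omega$, which is the claim. The main obstacle is simply the careful case analysis establishing the pointwise inequality above, together with checking that $u^-$ is indeed an admissible test function (it lies in ${W}_0^{J,\Psi}(\Omega)$, which follows from the second inequality in \eqref{kato-simple}, namely $E(|u|)\le E(u)<\infty$, and the analogous bound $E(u^-)\le E(u)$, both immediate from the sign considerations already invoked). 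Everything else is a direct substitution into the weak formulation.
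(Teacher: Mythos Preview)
Your proof is correct and follows essentially the same approach as the paper: test against $\varphi=u^-$ and use the pointwise sign inequality to obtain $\mathcal{E}(u;u^-)\le -\mathcal{E}(u^-;u^-)\le 0$, which together with the hypothesis forces $\mathcal{E}(u^-;u^-)=0$ and hence $u^-\equiv 0$. The paper's version simply quotes \eqref{kato-simple} (applied to $-u$, using that $\psi$ is odd) in place of your explicit four-case analysis, and concludes $u^-\equiv0$ directly from $\mathcal{E}(u^-;u^-)=0$ rather than passing through \eqref{equiv-Ee} and Poincar\'e; but the substance is identical.
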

\noindent{\it Proof.} Since $u^-\ge0$ and $u^-\in\mathcal{H}^{J,\Psi}(\R^N)$,  we have, by \eqref{kato-simple},
$$
0\ge- \mathcal{E}(u^-, u^-)\ge \mathcal{E}(u, u^-)\ge 0.
$$
Hence $u^-\equiv0$.
\qed

We now study the integrability properties of the solution in terms of the integrability of the datum in the power-like case~\eqref{psi-plap}. In the exact power case of the fractional $p$--Laplacian these integrability properties have been obtained in~\cite{BarriosPeralVita}. Our proofs in the more general case treated in this paper differ   from theirs in  that we are using Stroock-Varopoulos inequality instead of Kato inequality, and that we allow for the limit case $q_*=0$, which does not make sense in the fractional $p$--Laplacian. All the proofs are based on the well known Moser iteration technique for the standard Laplacian case, see for example the book~\cite{Gilbarg-Trudinger}.

The first result uses no singularity condition on the kernel $J$, besides being nonintegrable.
 \begin{theorem}\label{th-Lp1}
Assume condition \eqref{psi-plap}.  If $u$ is a weak solution to problem \eqref{linearproblem} with $f\in L^m(\Omega)$  then $u\in L^{m(p-1)}(\Omega)$.
\end{theorem}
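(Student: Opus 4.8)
The plan is to run a Moser-type iteration using the Stroock--Varopoulos inequality~\eqref{SV-plap} as the basic energy estimate, testing the weak formulation~\eqref{weaksol} with powers of $u$. Under~\eqref{psi-plap} we have $\psi(s)\sim|s|^{p-2}s$, $L^\Psi(\Omega)=L^p(\Omega)$, and~\eqref{SV-plap} reads $\mathcal{E}(u;|u|^{r-1}u)\ge c\,E(|u|^{\frac{r+p-1}{p}})$ for $r\ge1$. Fix $r>0$ and use $\varphi=|u|^{r-1}u$ as a test function (after the usual truncation $\varphi_T=|u_T|^{r-1}u_T$ with $u_T=\mathrm{sign}(u)\min\{|u|,T\}$, to guarantee $\varphi_T\in W^{J,\Psi}_0(\Omega)$, and pass to the limit $T\to\infty$ at the end by monotone convergence). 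This gives
\begin{equation*}
c\,E\!\left(|u|^{\frac{r+p-1}{p}}\right)\le \mathcal{E}(u;|u|^{r-1}u)=\int_\Omega f\,|u|^{r-1}u\le\int_\Omega|f|\,|u|^{r}.
\end{equation*}
By Poincar\'e~\eqref{poincare1} (valid with no singularity assumption on $J$ beyond nonintegrability), the left side dominates $c\,F(|u|^{\frac{r+p-1}{p}})=c\,\|u\|_{r+p-1}^{r+p-1}$.

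For the right side I would apply H\"older with exponents $m$ and $m'=m/(m-1)$: $\int_\Omega|f|\,|u|^r\le\|f\|_m\,\||u|^r\|_{m'}=\|f\|_m\,\|u\|_{rm'}^{r}$. Thus the iteration scheme is
\begin{equation*}
\|u\|_{r+p-1}^{r+p-1}\le c\,\|f\|_m\,\|u\|_{rm'}^{r}.
\end{equation*}
The natural choice is to pick $r$ so that the two Lebesgue exponents coincide in a way that closes the argument at the target exponent $m(p-1)$. Setting $r+p-1=m(p-1)$, i.e. $r=(m-1)(p-1)$, one checks that $rm'=\frac{(m-1)(p-1)\,m}{m-1}=m(p-1)$ as well, so both sides involve only $\|u\|_{m(p-1)}$; the inequality then becomes $\|u\|_{m(p-1)}^{m(p-1)}\le c\,\|f\|_m\,\|u\|_{m(p-1)}^{(m-1)(p-1)}$, which yields the bound $\|u\|_{m(p-1)}^{p-1}\le c\,\|f\|_m$, hence $u\in L^{m(p-1)}(\Omega)$ — provided we already know $\|u\|_{m(p-1)}<\infty$ to divide. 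That last caveat is exactly why the truncation is needed: one runs the estimate for $\varphi_T$, obtaining $\|u_T\|_{m(p-1)}^{p-1}\le c\,\|f\|_m$ uniformly in $T$ (the right-hand side only sees $|u|^r$ against $|f|$, controlled by $\|f\|_m$ and $\|u_T\|_{m(p-1)}^{(m-1)(p-1)}$ with a self-improving exponent $<1$ after dividing, or more cleanly by a standard bootstrap starting from $u\in L^p$), then let $T\to\infty$ by Fatou.

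The main obstacle is the bookkeeping that turns the a priori estimate into an honest one: since $p\ge q>1$ but $m$ is only assumed $\ge1$ (when $m=1$ the claim is $u\in L^{p-1}$, which may be weaker than the $L^p$ we start with, so the statement is presumably interesting for $m>1$), one must be careful that the exponent $r=(m-1)(p-1)$ is admissible in~\eqref{SV-plap} (it requires $r\ge1$, i.e. $(m-1)(p-1)\ge1$; for smaller $r$ one argues directly since then $m(p-1)\le p$ and $u\in L^p(\Omega)\subset L^{m(p-1)}(\Omega)$ on the bounded domain). The truncation step and the limit $T\to\infty$ must also be justified: $\varphi_T\in W^{J,\Psi}_0(\Omega)$ because $|\varphi_T|\le T^{r-1}|u_T|\le T^{r}$ and $|\varphi_T(x)-\varphi_T(y)|\le c_r T^{r-1}|u(x)-u(y)|$ so $E(\varphi_T)<\infty$, and the Stroock--Varopoulos inequality applies to the pair $A(t)=|t|^{\frac{r+p-1}{p}}\mathrm{sign}(t)$, $G(t)=|t|^{r-1}t$ (truncated), whose required relation $G'\ge|\Psi(A')|$ holds up to the constant in~\eqref{set-psi} under~\eqref{psi-plap}. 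Everything else is the routine Moser machinery, and the limit case $q_*=0$ causes no trouble here precisely because the whole argument uses only the Poincar\'e inequality, not any Sobolev gain.
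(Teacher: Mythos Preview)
Your proposal is correct and follows essentially the same route as the paper: test the weak formulation with a truncated power of $u$, apply the Stroock--Varopoulos inequality~\eqref{SV-plap} together with the Poincar\'e inequality~\eqref{poincare1} on the left, H\"older on the right, and close by choosing the exponent so that both Lebesgue norms coincide at $m(p-1)$. The paper parametrizes the test function via $H(s)=s^\beta$ (linear above a cutoff) with $\beta=\tfrac{m(p-1)}{p}$ and $G(u)=\int_0^u\Psi(H')$, which under~\eqref{psi-plap} amounts exactly to your $|u|^{r-1}u$ with $r=(\beta-1)p+1=(m-1)(p-1)$; the truncations differ cosmetically but serve the same purpose, and both arrive at the estimate $\|u\|_{m(p-1)}\le c\|f\|_m^{1/(p-1)}$.
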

Of course this result is not trivial only if $m>\frac p{p-1}$, since $u$ being a weak solution it belongs to ${W}^{J,\Psi}_0(\Omega)\subset L^p(\Omega)$.

\begin{proof}
Without loss of generality we may assume $u\ge0$, and
this simplifies notation; the general case is obtained in a similar way. We define for $\beta\geq 1$ and $K>0$  the function
$$
    H(s)=\left\{
    \begin{array}
      {l@{\qquad}l}
      s^\beta,&s\le K,\\
      \mbox{linear},&s>K.
    \end{array}
    \right.
$$
We choose as test function $ \varphi=G(u)=\int_0^u \Psi(H'(s))\,ds.$ It is easy to check that $\varphi\in {W}^{J,\Psi}_0(\Omega)$. In fact
$$
E(\varphi)\le\gamma^+_\Psi(\Psi(\beta K^{\beta-1}))E(u)<\infty.
$$
We obtain
on one hand, using the Stroock-Varopoulos inequality \eqref{S-V-plap} and the Poincar\'e inequality \eqref{poincare1},
 \begin{equation}
   \label{GT01}
     \mathcal{E}(u;G(u))\ge cE(H(u))\ge cF(H(u)),
  \end{equation}
and on the other hand, using H\"older inequality,
 \begin{equation}
   \label{GT02}
   \int_\Omega fG(u)\leq \|f\|_m\|G(u)\|_{m'}.
 \end{equation}
Letting $K\to \infty$ in the definition
of $H$, the inequalities~\eqref{GT01} and~\eqref{GT02} give
\begin{equation}\label{integrable1}
  \|u\|_{p\beta}^{p\beta}\leq c\|f\|_m\|u\|_{((\beta-1)p+1)m'}^{(\beta-1)p+1}\,.
\end{equation}
Choosing now $\beta=\frac{m(p-1)}p$, we get
\begin{equation*}\label{estim-Lp}
\|u\|_{m(p-1)}\le c\|f\|^{\frac1{p-1}}_m\,.
\end{equation*}

\qed
\end{proof}

The same proof allows to gain more integrability  when condition \eqref{alpha} holds.
\begin{theorem}\label{th-Lp2}
  Assume conditions \eqref{psi-plap} and \eqref{alpha} and let $u$ be a weak solution to problem \eqref{linearproblem}, where $f\in L^m(\Omega)$, $m<N/\alpha$. Then $u\in L^{\frac{m(p-1)N}{N-m\alpha}}(\Omega)$.
\end{theorem}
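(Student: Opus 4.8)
The plan is to run the Moser iteration scheme exactly as in the proof of Theorem~\ref{th-Lp1}, but now exploiting the extra regularizing power provided by the singularity condition \eqref{alpha}. The key point is that under \eqref{alpha} we have at our disposal the Sobolev embedding of Theorem~\ref{th-Sobolev}, namely $\|\Psi(v)\|_r\le CE(v)$ with $r=r^*=\frac N{N-\alpha}$, which in the power-like case \eqref{psi-plap} reads $\|v\|_{p r^*}^p\le cE(v)$ (up to constants, since $\Psi(s)\sim s^p$). This replaces the mere Poincar\'e inequality $E(v)\ge cF(v)$ used in \eqref{GT01} and is what allows a genuine gain of integrability rather than just propagation.

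First I would set up the test function exactly as before: assume without loss of generality $u\ge0$, fix $\beta\ge1$ and $K>0$, take $H(s)=s^\beta$ for $s\le K$ and linear for $s>K$, and use $\varphi=G(u)=\int_0^u\Psi(H'(s))\,ds\in{W}_0^{J,\Psi}(\Omega)$ as test function. The Stroock-Varopoulos inequality \eqref{S-V-plap}, or rather its power-like version \eqref{SV-plap}, gives
$$
\mathcal{E}(u;G(u))\ge cE(H(u))\ge c\|H(u)\|_{pr^*}^p,
$$
where the last step is Theorem~\ref{th-Sobolev} applied to $H(u)\in{W}_0^{J,\Psi}(\Omega)$. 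On the other side H\"older gives $\int_\Omega fG(u)\le\|f\|_m\|G(u)\|_{m'}$, and since $G(u)\sim u^{(\beta-1)p+1}$ and $H(u)\sim u^\beta$ (in the limit $K\to\infty$, controlled by monotone convergence as in Theorem~\ref{th-Lp1}), combining the two estimates yields
$$
\|u\|_{p\beta r^*}^{p\beta}\le c\|f\|_m\,\|u\|_{((\beta-1)p+1)m'}^{(\beta-1)p+1}.
$$
This is the analogue of \eqref{integrable1} with an extra factor $r^*$ on the left-hand side.

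To close the argument I would iterate this inequality. Starting from $u\in L^p(\Omega)$ (which holds since $u\in{W}_0^{J,\Psi}(\Omega)$), one checks that each application of the estimate with the appropriate choice of $\beta$ improves the exponent, and the iteration terminates at the exponent $\frac{m(p-1)N}{N-m\alpha}$. Concretely, one can verify directly that the single choice $\beta$ determined by $p\beta r^*=((\beta-1)p+1)m'$ — equivalently $\beta=\frac{m(p-1)N}{p(N-m\alpha)}\cdot\frac{1}{\,\cdot\,}$ after simplification — is the self-improving fixed point, leading to $\|u\|_{m(p-1)N/(N-m\alpha)}\le c\|f\|_m^{1/(p-1)}$; the hypothesis $m<N/\alpha$ is exactly what guarantees the target exponent is finite and positive, and that the iteration exponents stay in the admissible range of Theorem~\ref{th-Sobolev}. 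The main obstacle is purely bookkeeping: one must check that at each step the right-hand side norm $\|u\|_{((\beta-1)p+1)m'}$ is already controlled — either by the previous step of the iteration or by the initial membership $u\in L^p$ — so that the iteration is well-founded and the constants remain bounded; this is the standard Moser machinery and presents no real difficulty beyond careful tracking of exponents.
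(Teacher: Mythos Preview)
Your proposal is correct and matches the paper's proof: replace Poincar\'e by the Sobolev inequality~\eqref{Sobolev} in~\eqref{GT01} to obtain $\|u\|_{p\beta r^*}^{p\beta}\le c\|f\|_m\,\|u\|_{((\beta-1)p+1)m'}^{(\beta-1)p+1}$, and then take the single value of $\beta$ for which the two Lebesgue exponents coincide, giving $\|u\|_{m(p-1)N/(N-m\alpha)}\le c\|f\|_m^{1/(p-1)}$. The paper does not actually iterate---one choice of $\beta$ closes the estimate---so your iteration discussion is more than needed (and your displayed formula for $\beta$ is left incomplete), but the approach is the same.
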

Again this result is not trivial only if $m>\frac{Np}{Np-N+\alpha}$, since then $\frac{m(p-1)N}{N-m\alpha}>\frac{Np}{N-\alpha}$.

\begin{proof}
In the previous proof, using Sobolev inequality \eqref{Sobolev} instead of Poincar\'e inequality, we obtain in \eqref{integrable1}
\begin{equation*}\label{integrable2}
  \|u\|_{p\beta r^*}^{p\beta}\leq \beta\|f\|_m\|u\|_{((\beta-1)p+1)m'}^{(\beta-1)p+1},
\end{equation*}
$r^*=\frac N{N-\alpha}$. Choosing now $\beta=\frac{m'(p-1)}{p(r^*-m')}$, we get
\begin{equation*}\label{estim-Lp2}
\|u\|_{\frac{m(p-1)N}{N-m\alpha}}\le c\|f\|^{\frac1{p-1}}_m.
\end{equation*}
  \qed
\end{proof}

Even more, assuming a better integrability condition on $f$ we get that the solution is bounded.  This is a well known result for the standard Laplacian or the fractional Laplacian.
    \begin{theorem}\label{Linf}
  Assume conditions \eqref{psi-plap} and \eqref{alpha}. If $u$ is a weak solution to problem \eqref{linearproblem}, where $f\in L^m(\Omega)$ with $m>N/\alpha$, then $u\in L^\infty(\Omega)$.
\end{theorem}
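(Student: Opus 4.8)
The plan is to run a Moser iteration of exactly the same kind as in the proofs of Theorems~\ref{th-Lp1} and~\ref{th-Lp2}, but now choosing the exponents so that the integrability gain at each step is a fixed ratio \emph{strictly larger than one}; the hypothesis $m>N/\alpha$ is precisely what makes this possible.

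First I would reuse the iteration inequality obtained inside the proof of Theorem~\ref{th-Lp2}. Assuming $u\ge0$ (the sign-changing case reduces to $u^+$ and $u^-$ as in the proof of Theorem~\ref{th-Lp1}), one takes the test function $\varphi=G(u)=\int_0^u\Psi(H'(t))\,dt$ with $H(s)=s^\beta$ truncated linearly above a level $K$, so that $\varphi\in{W}^{J,\Psi}_0(\Omega)$; then the Stroock-Varopoulos inequality in the power-like form~\eqref{SV-plap}, the Sobolev inequality~\eqref{Sobolev}, H\"older's inequality, and the passage $K\to\infty$ by monotone convergence give
$$
\|u\|_{p\beta r^*}^{p\beta}\le c\,\beta\,\|f\|_m\,\|u\|_{((\beta-1)p+1)m'}^{(\beta-1)p+1},\qquad r^*=\frac{N}{N-\alpha},
$$
for every $\beta\ge1$ with finite right-hand side. (If $\alpha\ge N$ one first replaces $\alpha$ by a value below $N$, as in Theorem~\ref{th-compact2}.)

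Next I would iterate. Set $q_0=pr^*$, which is legitimate since $u\in L^{pr^*}(\Omega)$ by the Sobolev embedding, and $q_0>m'$: indeed $r^*>m'$, and a cross-multiplication shows this last inequality is exactly equivalent to $m>N/\alpha$. At step $n$ choose $\beta_n\ge1$ by $((\beta_n-1)p+1)\,m'=q_n$, which forces $q_{n+1}:=p\beta_n r^*=r^*\big(q_n/m'+p-1\big)$; since $r^*/m'>1$ one has $q_n\to\infty$, with $q_n\sim C\,(r^*/m')^{\,n}$. Rewriting the displayed estimate as $\|u\|_{q_{n+1}}\le\big(c\,\beta_n\|f\|_m\big)^{1/(p\beta_n)}\|u\|_{q_n}^{((\beta_n-1)p+1)/(p\beta_n)}$, taking logarithms and iterating, the accumulated exponent of $\|u\|_{q_0}$ converges and the accumulated constant is bounded by $\exp\big(\sum_n\frac{\log(c\,\beta_n\|f\|_m)}{p\beta_n}\big)$, a convergent series because $\beta_n$ grows geometrically while $\log\beta_n$ grows only linearly. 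Hence $\sup_n\|u\|_{q_n}<\infty$, and letting $n\to\infty$ yields $\|u\|_{L^\infty(\Omega)}=\lim_n\|u\|_{q_n}\le C(N,p,\alpha,m,\Omega)\big(1+\|f\|_m\big)^{\theta}<\infty$ for some $\theta=\theta(N,p,\alpha,m)>0$.

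The argument is essentially routine: the only steps needing a little care are the bookkeeping of the iteration constants — convergence of $\sum_n\log\beta_n/\beta_n$ and of the accumulated power of $\|u\|_{q_0}$, and the fact that $\beta_n\ge1$ is preserved along the recursion precisely because $q_0\ge m'$ and $r^*/m'>1$ — together with the verification, identical to that in the proof of Theorem~\ref{th-Lp1}, that each truncated $G(u)$ lies in ${W}^{J,\Psi}_0(\Omega)$ and that the limit $K\to\infty$ is legitimate. There is no genuine obstacle here; the one decisive input is the elementary equivalence $m>N/\alpha\Longleftrightarrow r^*>m'$, which is what turns the iteration from merely norm-reproducing (as in Theorem~\ref{th-Lp1}) into self-improving all the way up to $L^\infty$.
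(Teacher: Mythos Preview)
Your argument is correct and is a standard Moser iteration; the recursion $q_{n+1}=(r^*/m')\,q_n+r^*(p-1)$ is affine with ratio $r^*/m'>1$, so $\beta_n$ grows geometrically, the product $\prod_n\frac{(\beta_n-1)p+1}{p\beta_n}$ converges, and $\sum_n\log(c\beta_n\|f\|_m)/(p\beta_n)<\infty$, giving the uniform bound on $\|u\|_{q_n}$.

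The paper takes a slightly different route. Instead of iterating the inhomogeneous inequality from Theorem~\ref{th-Lp2} directly, it first shifts the unknown, setting $v=u+k$ with $k=(c\|f\|_m)^{1/(p-1)}$ and using the truncated test function built from $H(s)=s^\beta-k^\beta$ on $[k,K]$. The point of the shift is that, since $v\ge k$, one can write $G(v)\le v\Psi(H'(v))\le k^{-(p-1)}v^p\Psi(H'(v))$, so the factor $\|f\|_m$ cancels against $k^{p-1}$ and the iteration becomes the clean homogeneous relation $\|v\|_{r^*p\beta}\le\beta\,\|v\|_{pm'\beta}$, with a fixed exponent ratio $r^*/m'$ at every step. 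This is the classical Moser--Serrin device: it trades your affine bookkeeping for a purely geometric one, and makes the final estimate $\|u\|_\infty\le c\big(E(u)^{1/p}+\|f\|_m^{1/(p-1)}\big)$ transparent. Your direct approach avoids introducing the shift and is arguably more elementary, at the price of slightly heavier tracking of the accumulated exponents; neither route has any real advantage in generality.
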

\begin{proof}
We change here slightly the test function used in the previous two proofs.
We define for $\beta\geq 1$ and $K\geq k$ ($k$ to be chosen later) a ${\cal
C}^1([k,\infty))$ function $H$, as follows:
$$
    H(s)=\left\{
    \begin{array}
      {l@{\qquad}l}
      s^\beta-k^\beta,&s\in[k,K],\\
      \mbox{linear},&s>K.
    \end{array}
    \right.
$$
Let us also define $v=u+k$, and choose as test function $  \varphi=G(v)=\int_k^v \Psi(H'(s))\,ds.$
We obtain
on one hand, using the Stroock-Varopoulos inequality \eqref{S-V-plap} and the Sobolev inequality \eqref{Sobolev},
 \begin{equation}
   \label{GT1}
     \mathcal{E}(u;G(v))\ge cE(H(v))\ge c\|\Psi(H(v))\|_{r^*}\,,
  \end{equation}
and on the other hand, using H\"older inequality,
 \begin{equation}
   \label{GT2}
   \int_\Omega fG(v)\leq \int_\Omega fv\Psi(H'(v))\leq \frac{1}{k^{p-1}}\int_\Omega fv^p\Psi(H'(v))\leq \frac{c}{k^{p-1}}\|f\|_m\|vH'(v)\|^p_{pm'}\,,
 \end{equation}
 since $v\ge k$.
Inequality~\eqref{GT1} together with~\eqref{GT2}, and the properties of $\Psi$, lead to
\begin{equation}\label{eq:ineq}
  \|H(v)\|_{r^*p}\leq \left(\frac{c\|f\|_{m}}{k^{p-1}}\right)^{1/p}
  \|vH'(v)\|_{pm'}\,.
\end{equation}
We choose $k=(c\|f\|_m)^{\frac1{p-1}}$ and let $K\to \infty$ in the definition
of $H$, so that the inequality~\eqref{eq:ineq} becomes
$$
  \|u\|_{r^*p\beta}\leq \beta\|u\|_{pm'\beta}.
$$
Hence for all $\beta\geq 1$ the inclusion $u\in L^{pm'\beta}(\Omega)$ implies the stronger inclusion $u\in
L^{r^*p\beta}(\Omega)$, since $r^*=\frac{N}{N-\alpha}>m'=\frac{m}{m-1}$ provided $m>\frac N\alpha$. Observe that $u$ being a weak solution it belongs to ${W}^{J,\Psi}_0(\Omega)$, and thus $u\in
L^{\frac{Np}{N-\alpha}}(\Omega)$. The result follows now iterating the estimate starting with
$\beta=\frac{N(m-1)}{(N-\alpha)m}>1$, see for example  \cite[Theorem 8.15]{Gilbarg-Trudinger} for the details in the standard Laplacian case.  This gives $u\in L^\infty(\Omega)$. In fact we get the estimate
$$
\|u\|_\infty\le c(E(u)^{\frac1p}+\|f\|^{\frac1{p-1}}_m).
$$

  \qed
\end{proof}

\section{The  problem with reaction $f=f(u)$}\label{sect-nonlinear}
\setcounter{equation}{0}

We study in this section  the nonlinear elliptic type problem
 \begin{equation}\label{sublinear}
	\begin{cases}
		\mathcal{L}u=f(u),& \mbox{in }\Omega,\\ u\ge 0,\;u\not\equiv0,& \mbox{in }\Omega,\\ u=0,& \mbox{in }\Omega^c.
	\end{cases}
\end{equation}
We first show existence in the \emph{lower} case, i.e.,
when $f:\left[0,\infty\right) \to \R$ is a continuous function satisfying
\begin{equation}
\label{sub1}
\exists\;0<\mu<\frac{q-1}p\;:\qquad |f(t)|\le c_1+c_2\Psi^\mu(t),\qquad \liminf_{t\to0^+}\frac{f(t)}{\Psi^\mu(t)}\ge c_3>0.
\end{equation}
In the power-like case \eqref{psi-plap} with $f(t)=t^{m-1}$ this means $0<m<p$. See \cite{BrezisOswald} for the classical sublinear problem for $\mathcal{L}=-\Delta$ and~\cite{CdP} for general $\mathcal{L}$ with $q_*\ge0$, both in the case $\Psi(s)=|s|^2$.

\begin{theorem}\label{th-sublinear}
Under the assumption \eqref{sub1} problem \eqref{sublinear} has a solution $u\in  {W}^{J,\Psi}_0(\Omega)$. 	
\end{theorem}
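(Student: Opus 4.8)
The plan is to obtain the solution as a minimizer of the energy functional associated to \eqref{sublinear}. Set $\mathcal{F}(t)=\int_0^t f(s)\,ds$ for $t\ge0$, extended by $\mathcal{F}(t)=0$ for $t\le0$, and consider
\[
I(v)=E(v)-\int_\Omega \mathcal{F}(v^+)\,dx,\qquad v\in {W}^{J,\Psi}_0(\Omega).
\]
From the growth bound in \eqref{sub1}, $\mathcal{F}(t)\le c_1 t+c_2\Psi^\mu(t)t$, and since $\mu<\frac{q-1}{p}<1$ one has $\Psi^\mu(t)t\le c+c\,\Psi^{r}(t)$ for some $r<r^*$ (indeed $\mu p+1<q\le p$ forces the relevant exponent strictly below the Sobolev threshold), so the lower-order term is controlled by $\|\Psi(v)\|_r$ with $r<r^*$; by the Sobolev embedding \eqref{sobolev} (or just Poincaré \eqref{poincare1} in the limit case $q_*=0$) this gives $\int_\Omega\mathcal F(v^+)\le \varepsilon E(v)+C_\varepsilon$, hence $I$ is coercive on ${W}^{J,\Psi}_0(\Omega)$: using $\|v\|_{W^{J,\Psi}}\le c\max\{E(v)^{1/p},E(v)^{1/q}\}$ as in the proof of Theorem~\ref{th-existence-linear}, $I(v)\to\infty$ as $\|v\|_{W^{J,\Psi}}\to\infty$. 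Weak lower semicontinuity of $E$ and compactness of the embeddings in Theorems~\ref{th-compact}--\ref{th-compact2} (so that along a minimizing sequence $v_n\rightharpoonup u$ weakly and $\Psi(v_n^+)\to\Psi(u^+)$ strongly in $L^r$, whence $\int\mathcal F(v_n^+)\to\int\mathcal F(u^+)$) yield a minimizer $u\in {W}^{J,\Psi}_0(\Omega)$.

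Next I would check that the minimizer is nontrivial, which is where the lower bound $\liminf_{t\to0^+}f(t)/\Psi^\mu(t)\ge c_3>0$ enters. Fix any nonnegative $w\in {W}^{J,\Psi}_0(\Omega)$, $w\not\equiv0$, bounded, and test with $v=tw$ for small $t>0$. Then $E(tw)\le\gamma_\Psi^+(t)E(w)\le c\,t^q E(w)$ for $t<1$ by Lemma~\ref{gammas}, while the lower bound on $f$ gives $\mathcal F(tw)\ge c\,t^{\mu p+1}\int_\Omega\Psi^\mu(w)w$ for $t$ small (using $\Psi(ts)\ge t^p\Psi(s)$ appropriately, and that $w$ is bounded away from the region where the asymptotics fail on a set of positive measure). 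Since $\mu p+1<q$, the negative term dominates as $t\to0^+$, so $I(tw)<0=I(0)$ for small $t$; hence the minimizer has $I(u)<0$, so $u\not\equiv0$.

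Then I would verify that $u$ is a weak solution: $I$ is Fréchet differentiable with $\langle I'(v),\varphi\rangle=\mathcal E(v;\varphi)-\int_\Omega f(v^+)\varphi$ (differentiability of the reaction term follows from the growth bound \eqref{sub1} and continuity of $f$), so $I'(u)=0$, i.e. $\mathcal E(u;\varphi)=\int_\Omega f(u^+)\varphi$ for all $\varphi\in{W}^{J,\Psi}_0(\Omega)$. Taking $\varphi=u^-$ and using \eqref{kato-simple}, $0\le\mathcal E(u^-;u^-)\le-\mathcal E(u;u^-)=-\int_\Omega f(u^+)u^-=0$, so $u^-\equiv0$, i.e. $u\ge0$ and $f(u^+)=f(u)$; this is exactly the argument of Proposition~\ref{maximum}. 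Combined with $u\not\equiv0$ this gives a solution of \eqref{sublinear}.

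The main obstacle is the interplay between the Orlicz structure and the exponent bookkeeping: one must make precise, purely from $\mu<\frac{q-1}p$ and $\Psi\in\Gamma_{p,q}$, that the reaction term is both (i) subcritical enough that $\Psi^\mu(v)v$ sits in an $L^r$ space with $r<r^*$ so that coercivity and strong convergence of the nonlinear term along minimizing sequences hold, and (ii) superlinear-from-below enough (the homogeneity exponent $\mu p+1$ strictly less than $q$) that $I(tw)<0$ for small $t$. Both rely on Lemma~\ref{gammas}-type two-sided power estimates applied to compositions, together with the embeddings of Section~\ref{sect-inclusions}; once those estimates are in place the direct-method argument is standard. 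The limit case $q_*=0$ requires using Theorem~\ref{th-compact} (compactness of ${W}^{J,\Psi}_0(\Omega)\hookrightarrow L^\Psi(\Omega)$ under \eqref{singular}--\eqref{regular-v}) in place of the Sobolev embedding, but the structure of the proof is unchanged.
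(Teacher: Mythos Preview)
Your proposal is correct and follows the same variational strategy as the paper: minimize $I(v)=E(v)-\int_\Omega G(v)$ (with $G'=f$), showing coercivity, weak lower semicontinuity, compactness of the lower-order term along a minimizing sequence, and nontriviality via $I(\varepsilon w)<0$ for small $\varepsilon$.

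The one place where the paper's argument is simpler is the coercivity step. You route the estimate through the Sobolev embedding $W^{J,\Psi}_0(\Omega)\hookrightarrow L^{\Psi^r}(\Omega)$, but this is unnecessary: since the effective exponent you obtain satisfies $r=\mu+1/q<1$, plain H\"older on the bounded domain already gives
\[
\left|\int_\Omega G(v)\right|\le c|\Omega|+c|\Omega|^{1-\nu}\,F(v)^{\nu}\le c+c\,E(v)^{\nu}
\]
for some $\nu<1$, using only the Poincar\'e inequality~\eqref{poincare1}. This is exactly how the paper proceeds, and it avoids invoking condition~\eqref{alpha} for the coercivity part. Your remark that Poincar\'e suffices ``in the limit case $q_*=0$'' in fact applies in every case. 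The remaining differences---your use of the $v^+$ truncation and the maximum-principle argument with $\varphi=u^-$ versus the paper's replacement of $u$ by $|u|$ via $I(|u|)\le I(u)$---are equivalent ways of enforcing nonnegativity.
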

\begin{proof}
We define the energy functional $I:{W}^{J,\Psi}_0(\Omega)\to \R $ defined by
$$
I(v)=E(v)-\int_{\Omega}G(v),
$$
where $G(u)=\int_{0}^{u}f(s)\,ds.$ This functional is easily seen to be  weakly lower semicontinuous, and is well defined since
\begin{equation}\label{welldefined}
\left|\int_{\Omega}G(v)\right|\leq  c_1|\Omega|+c_2|\Omega|^{1-\mu}(F(v))^{\mu}<\infty.
\end{equation}
On the other hand, this same estimate also gives coercivity since $
\mu<1$, and then
$$
I(v)\geq E(v)-c\left(E(v)\right)^{\mu}\to\infty\qquad\text{as } \|v\|_{W^{J,\Psi}}\to\infty.
$$

Let now $\{v_n\}\subset{W}^{J,\Psi}_0(\Omega)$ be a minimizing sequence for $I$,
$$
\liminf_{n\to \infty}I(v_n)=\nu=\inf_{u\in {W}^{J,\Psi}_0(\Omega)}I(u).
$$
This sequence is bounded in ${W}^{J,\Psi}_0(\Omega)$, and therefore we can assume that there is a subsequence, still denoted $\{v_n\}$, such that $v_n\rightharpoonup u$ in ${W}^{J,\Psi}_0(\Omega)$. Therefore $v_n\to u$ in $L^\Psi(\Omega)$. We thus deduce by~\eqref{welldefined}
$$
\int_{\Omega}G(v_n) \to \int_{\Omega}G(u),
$$
so that
$$
\nu\le I(u)\leq \liminf_{n\to \infty}\left(E(v_n)-\int_{\Omega}G(v_n)\right)=\liminf_{n\to \infty}I(v_n)=\nu.
$$
This shows that $I(u)=\nu$ and $u$ is a global minimum for $I$,  hence  a solution to \eqref{sublinear}. It is easy to see that we can replace $u$ by $|u|$ since $I(|u|)\le I(u)$. In order to show that $u$ is nontrivial let us check that $I(u)<0$. In fact, given any $v\in {W}^{J,\Psi}_0(\Omega)$ we have
$$
I(\varepsilon v)\le \gamma^+_\Psi(\varepsilon)E(v)-\varepsilon\left(\gamma^-_\Psi(\varepsilon)\right)^\mu\int_{\Omega}G(u)
\le \varepsilon^qE(v)-\varepsilon^{1+p\mu}\int_{\Omega}G(u)<0,
$$
for small $\varepsilon>0$, since $q>1+p\mu$. We deduce that $\nu<0$ and $u\not\equiv0$.
\end{proof}\qed

Unfortunately we are only able to prove uniqueness in the exact power case $\Psi(s)=|s|^p$. In fact uniqueness follows in that case using a standard argument by means of  a Picone inequality proved in \cite{Frank-Seiringer}, see~\cite{BrezisOswald} and~\cite{CdP}. Though a Picone inequality could be obtained also assuming that \eqref{psi-plap} is satisfied, it is not sharp enough to prove uniqueness. In the more general case of $\Psi\in \Gamma_{p,q}$ such Picone type inequality is not even known to hold.

We now assume condition \eqref{alpha} and consider  nonlinear functions $f$ in the intermediate range, that is above the power $p-1$ but  subcritical in the sense of Sobolev, see Theorem~\ref{th-Sobolev}. The precise conditions on $f$ are
\begin{equation}\label{rho}
\begin{array}{lll}
\exists\; \rho>p \;:\; &tf(t)\ge  \rho G(t) &\quad\forall\; t>0; \\ [3mm]
  \exists\;  1<r<r^*,\,t_0>0 \;:\;&tf(t)\le c\psi^r(t) &\quad\forall\;t>t_0; \\ [3mm]
\exists\; \lambda_0>0 \;:\; &f(\lambda t)\ge \lambda^\rho f(t) &\quad\forall\; t>0,\,\lambda>\lambda_0,
  \end{array}
\end{equation}
where $G'=f$.  When $f(t)=t^{m-1}$ these condition hold with $\rho=m$ provided $p<m<\frac{Nq}{N-\alpha}$.
\begin{theorem}\label{th-subcritical}
Assume $J$ satisfies~\eqref{alpha}, $\psi$ satisfies either~\eqref{raizconvex} or~\eqref{concavemas}, and $f$ is a nondecreasing function satisfying \eqref{rho}. Then  problem \eqref{sublinear} has a solution $u\in  {W}^{J,\Psi}_0(\Omega)$. 	
\end{theorem}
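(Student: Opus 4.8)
The plan is to obtain the solution via the Mountain Pass Theorem applied to the energy functional
$$
I(v)=E(v)-\int_\Omega G(v),\qquad G'=f,
$$
on the reflexive Banach space $X={W}^{J,\Psi}_0(\Omega)$. First I would record the variational setup: by Proposition~\ref{prop-W1p}-style arguments, the growth bound $tf(t)\le c\psi^r(t)$ for $t>t_0$ (together with $r<r^*$, so $G(t)\le c\,\Psi^r(t)+c$) and the Sobolev embedding of Theorem~\ref{th-Sobolev}, $\|\Psi(v)\|_r\le CE(v)$, guarantee that $v\mapsto\int_\Omega G(v)$ is finite, weakly continuous (by the \emph{compact} embedding ${W}^{J,\Psi}_0(\Omega)\hookrightarrow L^{\Psi^r}(\Omega)$ of Theorem~\ref{th-compact2} for $r<r^*$), and $C^1$, with $\langle I'(v),\varphi\rangle=\mathcal{E}(v;\varphi)-\int_\Omega f(v)\varphi$ via the relation $\langle E'(v),\varphi\rangle=\mathcal{E}(v;\varphi)$. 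As in Theorem~\ref{th-sublinear} one may work with $|v|$ in place of $v$, and the maximum principle (Proposition~\ref{maximum}) will then give $u\ge0$; nontriviality comes for free from the mountain pass level being strictly positive.

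Next I would verify the two geometric hypotheses of the Mountain Pass Theorem. For the local minimum at the origin: the superlinearity $\rho>p$ forces $G(t)=o(\Psi(t))$ as $t\to0$, while the subcritical bound gives $G(t)\le c\Psi^r(t)+c'\Psi(t)$ globally; combining with $\|\Psi(v)\|_r\le CE(v)\le C\max\{[v]^p,[v]^q\}$ and Poincar\'e, one gets $I(v)\ge \tfrac12\min\{[v]^p,[v]^q\}-c\,(E(v))^r-\varepsilon E(v)\ge \alpha>0$ on a small sphere $[v]_{W^{J,\Psi}}=\delta$, using $r>1$ and $q>1$. For the far point: fix any $v_0\in X$, $v_0\ge0$, $v_0\not\equiv0$; the scaling hypothesis $f(\lambda t)\ge\lambda^\rho f(t)$ for $\lambda>\lambda_0$ integrates to $G(\lambda t)\ge \lambda^{\rho+1}G(t)-C$ (for $t$ in the relevant range), hence $G(\lambda v_0)\ge \lambda^{\rho+1}\int_\Omega G(v_0)-C|\Omega|$ once $\int_\Omega G(v_0)>0$ (which holds by the $\liminf$ part of the hypotheses after rescaling $v_0$ so that it is pointwise small where needed, or simply because $G$ is eventually positive); since $E(\lambda v_0)\le \gamma_\Psi^+(\lambda)E(v_0)\le \lambda^p E(v_0)$ for $\lambda\ge1$ and $\rho+1>p+1>p$, we get $I(\lambda v_0)\to-\infty$, so some $e=\lambda_1 v_0$ satisfies $I(e)<0$ with $[e]_{W^{J,\Psi}}>\delta$.

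The main obstacle is the Palais--Smale condition: given $\{v_n\}\subset X$ with $I(v_n)\to c$ and $I'(v_n)\to0$ in $X'$, I must show boundedness and then strong convergence of a subsequence. Boundedness is the Ambrosetti--Rabinowitz computation: from $\rho G(t)\le tf(t)$ and $qE(v)\le\mathcal{E}(v;v)\le pE(v)$ (inequality~\eqref{equiv-Ee}),
$$
c+1+\|v_n\|_X\;\ge\; I(v_n)-\tfrac1\rho\langle I'(v_n),v_n\rangle\;\ge\; \Bigl(1-\tfrac p\rho\Bigr)E(v_n)+\tfrac1\rho\int_\Omega\bigl(v_nf(v_n)-\rho G(v_n)\bigr)\;\ge\;\Bigl(1-\tfrac p\rho\Bigr)E(v_n),
$$
so $E(v_n)$ is bounded and hence (by~\eqref{equiv-norms2} and Poincar\'e) $\{v_n\}$ is bounded in $X$. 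By reflexivity pass to $v_n\rightharpoonup u$; by the compact embedding $v_n\to u$ in $L^{\Psi^r}(\Omega)$, so $\int_\Omega f(v_n)(v_n-u)\to0$. Then $\langle I'(v_n)-I'(u),v_n-u\rangle\to0$ reduces to $\mathcal{E}(v_n;v_n-u)-\mathcal{E}(u;v_n-u)\to0$, i.e.
$$
\iint_{\R^{2N}}\bigl(\psi(w_n)-\psi(w)\bigr)\bigl((v_n(x)-u(x))-(v_n(y)-u(y))\bigr)J(x-y)\,dxdy\to0,
$$
where $w_n(x,y)=v_n(x)-v_n(y)$, $w(x,y)=u(x)-u(y)$. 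Under hypothesis~\eqref{raizconvex} the Clarkson inequality~\eqref{clarkson1} bounds this integrand below by $4\Psi\bigl(\tfrac{w_n-w}2\bigr)$, giving $E(v_n-u)\to0$; under~\eqref{concavemas} one uses~\eqref{clarkson3} together with the already-established boundedness of $E(v_n)+E(u)$ and H\"older, exactly as in the uniqueness argument of Theorem~\ref{th-existence-linear}, to conclude $E(v_n-u)\to0$. Either way $v_n\to u$ strongly in $X$, so $(PS)_c$ holds, the Mountain Pass Theorem applies, and the resulting critical point $u$ — after replacing by $|u|$ and invoking Proposition~\ref{maximum} — is the desired nonnegative nontrivial solution of~\eqref{sublinear}.
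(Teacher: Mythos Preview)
Your proposal is correct and follows essentially the same Mountain Pass approach as the paper: the same functional $I(v)=E(v)-\int_\Omega G(v)$, the same Ambrosetti--Rabinowitz computation $I(v_n)-\tfrac1\rho\langle I'(v_n),v_n\rangle\ge(1-\tfrac p\rho)E(v_n)$ for boundedness of Palais--Smale sequences, and the same use of the compact embedding of Theorem~\ref{th-compact2} together with the Clarkson-type inequalities~\eqref{clarkson1}/\eqref{clarkson3} (exactly as in the uniqueness proof of Theorem~\ref{th-existence-linear}) to pass from weak to strong convergence. The only cosmetic differences are the ordering (the paper does Palais--Smale first, geometry second) and minor variations in the near-origin and far-point estimates; your explicit remark about nonnegativity via Proposition~\ref{maximum} is in fact a detail the paper glosses over.
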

\begin{proof}
As before we consider the  functional
$$
I(v)=E(v)-\int_\Omega G(v),
$$
whose critical points are the solutions to our problem. This functional is well defined in ${W}^{J,\Psi}_0(\Omega)$ thanks to the Sobolev embedding and the second condition in~\eqref{rho}. We therefore apply the standard variational technique based on the Mountain Pass Theorem~\cite{Ambrosetti-Rabinowitz}. We only have to prove that the functional satisfies the Palais-Smale condition and has the appropriate geometry.

We  first prove that any Palais-Smale sequence has a convergent subsequence. Let $\{v_n\}$ be a sequence satisfying
$$
I(v_n)\to \nu,\qquad \langle I'(v_n),\varphi\rangle\to0\quad\forall\;\varphi\in\left({W}^{J,\Psi}_0(\Omega)\right)'.
$$
By the first condition in~\eqref{rho}, and using~\eqref{equiv-Ee}, we have
$$
\langle I'(v_n),v_n\rangle =\mathcal{E}(v_n;v_n)-\int_\Omega  v_nf(v_n)\le pE(v_n)-\rho\int_\Omega  G(v_n).
$$
On the other hand, for all large $n$,
$$
|\langle I'(v_n),v_n\rangle|\le \varepsilon_n\|v_n\|_{W^{J,\Psi}}.
$$
Therefore
$$
\begin{array}{rl}
\nu+1&\displaystyle\ge I(v_n)=I(v_n)-\frac1\rho\langle I'(v_n),v_n\rangle+\frac1\rho\langle I'(v_n),v_n\rangle\\ [3mm]
&\displaystyle\ge \left(1-\frac p\rho\right)E(v_n)-\frac{\varepsilon_n}\rho\|v_n\|_{W^{J,\Psi}} \\ [3mm]
&\displaystyle\ge\left(1-\frac p\rho\right)
\min\left\{\|v_n\|_{W^{J,\Psi}}^p,\,\|v_n\|_{W^{J,\Psi}}^q\right\}-
\frac{\varepsilon_n}\rho\|v_n\|_{W^{J,\Psi}}.
\end{array}
$$
This implies $\|v_n\|_{W^{J,\Psi}}\le k$ for every $n$, so that there exists a subsequence, still denoted $\{v_n\}$, converging weakly to some $u\in{W}^{J,\Psi}_0(\Omega)$, and by Theorem~\ref{th-compact2} it is $v_n\to v_\infty$ strongly in $L^{\Psi^r}(\Omega)$ for every $1\le r<r^*$. The second condition in~\eqref{rho} implies  $v_nf(v_n)\to v_\infty f(v_\infty)$ in $L^1(\Omega)$. Now write,
$$
\begin{array}{rl}
\mathcal{E}(v_n;v_n-v_\infty)-\mathcal{E}(v_\infty;v_n-v_\infty)&\displaystyle=\langle I'(v_n),v_n-v_\infty\rangle-\langle I'(v_\infty),v_n-v_\infty\rangle \\ [3mm]
&\displaystyle+\int_\Omega (f(v_n)-f(v_\infty))(v_n-v_\infty)\to0.
\end{array}
$$
Using inequalities~\eqref{clarkson1} or~\eqref{clarkson3}  as in the proof of uniqueness in Theorem~\ref{th-existence-linear}, we obtain
$$
E(v_n-v_\infty)\to0,
$$
that is $v_n\to v_\infty$ in ${W}^{J,\Psi}_0(\Omega)$, and Palais-Smale condition holds.

Let us now look at the behaviour of $I$ close to the origin and far from it. First $I(0)=0$. Also, given any $v\in{W}^{J,\Psi}_0(\Omega)$, we have by Poincar\'e inequality and the second condition in~\eqref{rho}
$$
I(v)=E(v)-\int_\Omega  G(v)\ge c_1\int_\Omega \psi(v)-c_2\int_\Omega \psi^r(v)\ge c_1F(v)-c_3F^r(v)>0
$$
for every $F(v)$ small. But $\|v\|_{W^{J,\Psi}}$ small implies $F(v)$ small.  We have obtained
$$
\exists\;\varepsilon>0\;:\; I(v)>I(0)\quad\forall\;v\in{W}^{J,\Psi}_0(\Omega),\;\|v\|_{{W}^{J,\Psi}}=\varepsilon.
$$
On the other hand, if $\lambda>0$ is large, using the third condition in~\eqref{rho}, we get
$$
I(\lambda v)\le \lambda^pE(v)-\lambda^\rho\int_\Omega  G( v)<0,
$$
since $p<\rho$. Thus
$$
\exists\;\overline v\in{W}^{J,\Psi}_0(\Omega),\; \|\overline v\|_{{W}^{J,\Psi}}>\varepsilon\;:\;I(\overline v)<I(0).
$$
This ends the proof by an application of the Mountain Pass Theorem. Actually, if we define
$$
\Theta=\{h\in C([0,1];{W}^{J,\Psi}_0(\Omega))\;:\;h(0)=0,\,h(1)=\overline v\},
$$
then
$$
\eta=\inf_{h\in\Theta}\max_{t\in[0,1]}I(h(t))
$$
is a critical value with $I(u)=\eta$ for some $u\in{W}^{J,\Psi}_0(\Omega)$, which is a solution to our problem.

\end{proof}\qed

The exponent $r^*$ in \eqref{rho} is sharp in the fractional $p$--Laplacian case. In fact, in the fractional Laplacian case $p=2$ this has been proved in~\cite{RosOtonSerra} by means of a Pohozaev identity when $\Omega$ is star-shaped. Their proof was adapted in \cite{CdP} for more general kernels again with $\Psi(s)=|s|^2$, obtaining an exponent which depends on the kernel and is presumed not to be optimal. The proof of this last result works verbatim for general powers $\Psi(s)=|s|^p$, but not for other functions, since homogeneity is crucial in the argument.

Let, for $\lambda>1$,
\begin{equation}
  \label{mu}
  \mu(\lambda)=\lambda^{-N}\sup_{\substack{z\in\mathbb{R}^N\\ z\ne0}}\frac{J(z/\lambda)}{J(z)},
\end{equation}
and assume $\mu(\lambda)<\infty$ for $\lambda$ close to 1.
\begin{theorem}
\label{teo-pohozaev}
If $u$ is a bounded   solution to problem \eqref{sublinear} with $\Psi(s)=|s|^p$ and $\Omega$ is star-shaped, then
\begin{equation*}
  \label{pohozaev}
  \int_\Omega uf(u)\le\frac{Np}{N-\delta}\int_\Omega G(u),
\end{equation*}
where $\delta=\mu'(1^+)$ and $G'=f$.
\end{theorem}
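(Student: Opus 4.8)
The approach is the nonlocal Pohozaev (scaling) argument: compare $u$ with its dilations, and combine the weak formulation of $\mathcal{L}u=f(u)$ with the one-sided scaling behaviour of $E$ encoded in \eqref{mu}. Since star-shapedness is invariant under translation, I may assume $\Omega$ is star-shaped with respect to the origin. For $\lambda\ge1$ set $u^\lambda(x)=u(\lambda x)$. Star-shapedness gives $\frac1\lambda\overline\Omega\subseteq\overline\Omega$, so $u^\lambda$ vanishes on $\Omega^c$; changing variables $x'=\lambda x$, $y'=\lambda y$ in \eqref{energyE} and using $J(z/\lambda)\le\lambda^N\mu(\lambda)J(z)$ (which is exactly the content of \eqref{mu}),
$$
E(u^\lambda)=\frac12\lambda^{-2N}\iint_{\mathbb{R}^{2N}}|u(x)-u(y)|^p\,J\!\left(\frac{x-y}{\lambda}\right)dxdy\ \le\ \lambda^{-N}\mu(\lambda)\,E(u),
$$
with equality at $\lambda=1$ since $\mu(1)=1$. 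As $u^\lambda\in L^p(\Omega)$ trivially, this also shows $u^\lambda\in{W}^{J,\Psi}_0(\Omega)$.

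Since $\Psi(s)=|s|^p$ is convex, $E$ is convex and Gateaux-differentiable with $\langle E'(u),\varphi\rangle=\mathcal{E}(u;\varphi)$, hence $\mathcal{E}(u;u^\lambda-u)\le E(u^\lambda)-E(u)$. Testing the weak formulation \eqref{weaksol} (with $f=f(u)$) against $\varphi=u^\lambda-u\in{W}^{J,\Psi}_0(\Omega)$ gives $\mathcal{E}(u;u^\lambda-u)=\int_\Omega f(u)(u^\lambda-u)$. Combining with the scaling bound and dividing by $\lambda-1>0$,
$$
\frac1{\lambda-1}\int_\Omega f(u)\bigl(u^\lambda-u\bigr)\ \le\ \frac{\lambda^{-N}\mu(\lambda)-1}{\lambda-1}\,E(u).
$$
Now let $\lambda\to1^+$. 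The right-hand side tends to $(\delta-N)E(u)$ with $\delta=\mu'(1^+)$ (the right derivative of $\lambda\mapsto\lambda^{-N}\mu(\lambda)$ at $\lambda=1$). For the left-hand side, $\frac{u^\lambda(x)-u(x)}{\lambda-1}=\frac1{\lambda-1}\int_1^\lambda x\cdot\nabla u(tx)\,dt\to x\cdot\nabla u(x)$, and, since $u$ is bounded (hence $f(u)$ bounded) and $\nabla u$ is under control, dominated convergence together with an integration by parts yield
$$
\lim_{\lambda\to1^+}\frac1{\lambda-1}\int_\Omega f(u)\bigl(u^\lambda-u\bigr)=\int_\Omega f(u)\,x\cdot\nabla u=\int_\Omega x\cdot\nabla\bigl(G(u)\bigr)=-N\int_\Omega G(u),
$$
the boundary term $\int_{\partial\Omega}G(u)\,(x\cdot\nu)$ vanishing because $u=0$ on $\partial\Omega$ and $G(0)=0$ (and, even if only its sign is known, it is nonnegative by star-shapedness, which only strengthens the inequality). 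Thus $(N-\delta)E(u)\le N\int_\Omega G(u)$. Finally, testing \eqref{weaksol} against $\varphi=u$ and using $\mathcal{E}(u;u)=pE(u)$ (equality in \eqref{equiv-Ee} since $q=p$ for $\Psi(s)=|s|^p$) gives $pE(u)=\int_\Omega uf(u)$; substitution yields $\int_\Omega uf(u)\le\frac{Np}{N-\delta}\int_\Omega G(u)$.

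The main obstacle is the limit on the left-hand side: one must know that the difference quotients $(u^\lambda-u)/(\lambda-1)$ converge (say in $L^1(\Omega)$) to $x\cdot\nabla u$ and that the integration by parts is legitimate up to $\partial\Omega$. This is precisely where the boundedness of $u$ enters, together with the boundary regularity available for this operator (continuity of $u$, with $u=0$ on $\partial\Omega$, and $\nabla u\in L^1(\Omega)$); one also needs implicitly $\delta<N$, without which the statement is void. For $\Psi(s)=|s|^2$ this is exactly the argument of \cite{CdP}, and it carries over verbatim here since only the scaling structure of $E$ is used. (In the exact fractional $p$--Laplacian case $J(z)=c|z|^{-N-\sigma p/2}$ one has $\mu(\lambda)=\lambda^{\sigma p/2}$, so $\delta=\sigma p/2$ and the scaling bound becomes an equality, turning the above into the classical Pohozaev identity.)
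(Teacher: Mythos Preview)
The paper does not supply its own proof of this theorem; it simply states that the argument of \cite{CdP} for $\Psi(s)=|s|^2$ ``works verbatim for general powers $\Psi(s)=|s|^p$, but not for other functions, since homogeneity is crucial in the argument.'' Your sketch is exactly that scaling/Pohozaev argument---dilate $u$, use convexity of $E$ together with the one-sided scaling bound coming from \eqref{mu}, differentiate at $\lambda=1^+$, and close with $\mathcal{E}(u;u)=pE(u)$---so it coincides with the approach the paper has in mind. Your acknowledgement that the passage to the limit on the left-hand side (convergence of $(u^\lambda-u)/(\lambda-1)$ and the integration by parts up to $\partial\Omega$) is the delicate step, and that it is handled in \cite{CdP}, is accurate; that is indeed where the boundedness hypothesis and the implicit condition $\delta<N$ are used.
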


\begin{cor}\label{cor-p^*}
  Problem \eqref{sublinear} with $f(u)=u^{m-1}$, $\Psi(s)=|s|^p$ and $\Omega$ star-shaped has no bounded   solutions for any exponent $m>m_*=\frac{Np}{N-\delta}$.
\end{cor}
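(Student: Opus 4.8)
The plan is to derive Corollary~\ref{cor-p^*} directly from the Pohozaev inequality of Theorem~\ref{teo-pohozaev} by specializing the nonlinearity $f(u)=u^{m-1}$. First I would compute the relevant primitive: with $f(t)=t^{m-1}$ we have $G(t)=\int_0^t s^{m-1}\,ds=\frac{t^m}m$, so $G'=f$ as required, and $uf(u)=u^m=mG(u)$ pointwise. Integrating over $\Omega$ gives the exact identity $\int_\Omega uf(u)=m\int_\Omega G(u)$. Since any nontrivial nonnegative solution $u\not\equiv0$ of \eqref{sublinear} has $\int_\Omega G(u)>0$ (as $u\ge0$, $u\not\equiv0$ and $G$ is strictly positive away from the origin), the integral $\int_\Omega G(u)$ is a strictly positive finite number, and we may divide by it.

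Next I would invoke Theorem~\ref{teo-pohozaev}, which applies because $\Psi(s)=|s|^p$ and $\Omega$ is star-shaped and $u$ is assumed bounded. It yields $\int_\Omega uf(u)\le \frac{Np}{N-\delta}\int_\Omega G(u)=m_*\int_\Omega G(u)$. Combining with the identity from the previous step, $m\int_\Omega G(u)\le m_*\int_\Omega G(u)$, and dividing by the positive quantity $\int_\Omega G(u)$ gives $m\le m_*$. Hence if $m>m_*$ no bounded solution with $u\not\equiv0$ can exist; this is precisely the assertion of the corollary. (The case of the trivial solution $u\equiv0$ is excluded by the requirement $u\not\equiv0$ built into problem \eqref{sublinear}, so there is nothing further to check there.)

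The only genuinely substantive input is Theorem~\ref{teo-pohozaev} itself, which is stated just above and may be assumed; given it, the corollary is a one-line specialization. I would therefore present the argument compactly: state $G(u)=u^m/m$, note $uf(u)=mG(u)$ so $\int_\Omega uf(u)=m\int_\Omega G(u)$, apply the Pohozaev inequality to get $m\int_\Omega G(u)\le \frac{Np}{N-\delta}\int_\Omega G(u)$, observe $\int_\Omega G(u)>0$ for a nontrivial nonnegative $u$, and conclude $m\le\frac{Np}{N-\delta}=m_*$, contradicting $m>m_*$. The main (and essentially only) obstacle is verifying that $\int_\Omega G(u)$ is strictly positive so that the division is legitimate — this follows because $u$ is nonnegative, not identically zero, and $G(t)=t^m/m>0$ for $t>0$; no estimate beyond this positivity is needed, since the exponent $\delta$ and the constant $m_*$ are handed to us by Theorem~\ref{teo-pohozaev}.

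\proof
Suppose, for contradiction, that $u$ is a bounded solution to \eqref{sublinear} with $f(u)=u^{m-1}$, $\Psi(s)=|s|^p$, $\Omega$ star-shaped, and $m>m_*=\frac{Np}{N-\delta}$. Since $f(t)=t^{m-1}$, the primitive with $G(0)=0$ is $G(t)=\frac{t^m}{m}$, and hence $tf(t)=t^m=mG(t)$ for all $t\ge0$. Integrating over $\Omega$ we obtain
$$
\int_\Omega uf(u)=m\int_\Omega G(u).
$$
Because $u\ge0$ and $u\not\equiv0$ in $\Omega$, and $G(t)=\frac{t^m}{m}>0$ for $t>0$, we have $\int_\Omega G(u)>0$. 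On the other hand, Theorem~\ref{teo-pohozaev} applies and gives
$$
\int_\Omega uf(u)\le\frac{Np}{N-\delta}\int_\Omega G(u)=m_*\int_\Omega G(u).
$$
Combining the two relations yields $m\int_\Omega G(u)\le m_*\int_\Omega G(u)$, and dividing by the positive number $\int_\Omega G(u)$ gives $m\le m_*$, contradicting $m>m_*$. Therefore no bounded solution exists.
\qed
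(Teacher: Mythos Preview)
Your argument is correct and is exactly the intended derivation: the paper states the corollary without proof precisely because it follows immediately from Theorem~\ref{teo-pohozaev} by the computation $tf(t)=t^m=mG(t)$ and the positivity of $\int_\Omega G(u)$ for a nontrivial nonnegative $u$. There is nothing to add.
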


We observe that this nonexistence result depends not only on the behaviour of the kernel at the origin, but on its global behaviour, see~\eqref{mu}. In fact when the kernel is
$$
J(z)=\begin{cases}
  |z|^{-N-\alpha_1}&\text{ if } |z|<1,\\
  |z|^{-N-\alpha_2}&\text{ if } |z|>1,
\end{cases}
$$
$\alpha_1<p$, $\alpha_2>0$, we get $\sigma=\max\{\alpha_1,\alpha_2\}$. It will be interesting to know if only the singularity of $J$ at the origin determines by its own the existence or nonexistence of solution. If this is the case we would get, in the critical singularity exponent $q_*=0$ in~\eqref{kernel0}, that there is no solution for any $m>p$. This, together with the existence result for $m<p$ of Theorem~\ref{th-sublinear}, leaves only the case $m=p$ to be studied. We dedicate next section to this task.

\section{The eigenvalue problem}\label{sect-eigen}
\setcounter{equation}{0}

In this last section we study the so called eigenvalue problem
\begin{equation}\label{eigenvalueproblem}
\begin{cases}
\mathcal{L}u=\lambda\psi(u),& \mbox{in }\Omega,\\ u=0,& \mbox{in }\Omega^c.
\end{cases}
\end{equation}
The first eigenvalue and eigenfunction are obtained minimizing
\begin{equation*}\label{functional}
I(v)=\frac{E(v)}{F(v)},
\qquad v\in{W}^{J,\Psi}_0(\Omega)\setminus\{0\}.
\end{equation*}
In fact, if $u$ is a minimum, the function $g(t)=I(u+t\varphi)$, for any admisible function $\varphi$ satisfies $g(0)=\frac{E(u)}{F(u)}=\lambda_1$, $g'(0)=0$, that is,
$$
\langle E'(u),\varphi\rangle=\lambda_1\langle F'(u),\varphi\rangle,
$$
which is the associated Euler-Lagrange equation, the weak formulation \eqref{weaksol}.

\begin{theorem}\label{Principal_eigen}
    Define
    $$
    \lambda_1 = \displaystyle \inf_{v\in W^{J,\Psi}_0(\Omega)\setminus\{0\}}I(v).
    $$
    Then $\lambda_1$ is positive and is achieved by some $u\in W^{J,\Psi}_0(\Omega)\setminus\{0\}$.  Moreover, the function $u$ is a weak solution to problem \eqref{eigenvalueproblem}. The solution does not change sign, and it is moreover bounded if~\eqref{psi-plap} and~\eqref{alpha} holds for some $\alpha>0$.
\end{theorem}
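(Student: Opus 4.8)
The plan is to minimize the Rayleigh-type quotient $I(v)=E(v)/F(v)$ over $W^{J,\Psi}_0(\Omega)\setminus\{0\}$ via the direct method, then derive the Euler–Lagrange equation, and finally establish the sign property and boundedness. First, positivity of $\lambda_1$: the Poincaré inequality \eqref{poincare1}, $E(v)\ge AF(v)$, immediately gives $I(v)\ge A>0$ for all admissible $v$, hence $\lambda_1\ge A>0$. For the existence of a minimizer, take a minimizing sequence $\{v_n\}$; by homogeneity of the quotient we may normalize so that $F(v_n)=1$ for all $n$, and then $E(v_n)\to\lambda_1$, so $\{v_n\}$ is bounded in $W^{J,\Psi}_0(\Omega)$ (using \eqref{equiv-norms}, \eqref{equiv-norms2} and Poincaré as in the coercivity argument of Theorem~\ref{th-existence-linear}). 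By reflexivity we extract $v_n\rightharpoonup u$ weakly in $W^{J,\Psi}_0(\Omega)$; by the compact embedding $W^{J,\Psi}_0(\Omega)\hookrightarrow L^\Psi(\Omega)$ — which holds either by Theorem~\ref{th-compact2} when \eqref{alpha} is in force, or already via the Poincaré setup together with the compactness Theorem~\ref{th-compact} under \eqref{singular}--\eqref{regular-v}, and in any case is available here — we get $v_n\to u$ strongly in $L^\Psi(\Omega)$, so $F(v_n)\to F(u)$, whence $F(u)=1$ and in particular $u\not\equiv0$. Since $E$ is convex and (sequentially) weakly lower semicontinuous on $W^{J,\Psi}_0(\Omega)$, $E(u)\le\liminf E(v_n)=\lambda_1$, so $I(u)\le\lambda_1$; by definition of $\lambda_1$ equality holds and $u$ is a minimizer.

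Next, the Euler–Lagrange equation. For any $\varphi\in W^{J,\Psi}_0(\Omega)$ set $g(t)=I(u+t\varphi)$; since $u$ is a minimizer, $g$ has a minimum at $t=0$, so $g'(0)=0$. Using that $E$ and $F$ are Fréchet differentiable with $\langle E'(u),\varphi\rangle=\mathcal{E}(u;\varphi)$ and $\langle F'(u),\varphi\rangle=\int_\Omega\psi(u)\varphi$, the condition $g'(0)=0$ reads
$$
\frac{\mathcal{E}(u;\varphi)F(u)-E(u)\int_\Omega\psi(u)\varphi}{F(u)^2}=0,
$$
i.e. $\mathcal{E}(u;\varphi)=\lambda_1\int_\Omega\psi(u)\varphi$ for all $\varphi$, which is precisely the weak formulation \eqref{weaksol} of \eqref{eigenvalueproblem}. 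For the sign property, observe that $E(|u|)\le E(u)$ by \eqref{kato-simple} while $F(|u|)=F(u)$ since $\Psi$ is symmetric, so $|u|$ is also a minimizer; replacing $u$ by $|u|$ we may assume $u\ge0$. (If one wants strict positivity in $\Omega$, it follows from the maximum principle Proposition~\ref{maximum} applied to $u\ge0$ in $\Omega^c$ together with $\mathcal{E}(u;\varphi)=\lambda_1\int_\Omega\psi(u)\varphi\ge0$ for $\varphi\ge0$; at the stated level of generality it suffices to record that $u$ does not change sign.)

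Finally, boundedness under \eqref{psi-plap} and \eqref{alpha}. The point is that $u$ solves $\mathcal{L}u=f$ with $f=\lambda_1\psi(u)$, and by \eqref{psi-plap} we have $|f|\le c|u|^{p-1}$ with $u\in W^{J,\Psi}_0(\Omega)\subset L^{Np/(N-\alpha)}(\Omega)$ by the Sobolev embedding \eqref{Sobolev}; so a bootstrap of the Moser iteration of Theorems~\ref{th-Lp1}--\ref{Linf} applies. Concretely, running the iteration scheme of Theorem~\ref{Linf} with $f$ replaced by $\lambda_1\psi(u)\le c u^{p-1}$ — the nonlinear dependence of $f$ on $u$ only helps, as each iteration step raises the integrability exponent of $u$ by the fixed factor $r^*/m'>1$ once the starting integrability is high enough — yields, after finitely many steps, $u\in L^\infty(\Omega)$. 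The main obstacle, and the one point deserving care, is exactly this last step: one must check that the Moser iteration of Section~\ref{sect-linear}, designed for $f=f(x)\in L^m$, still closes when $f=\lambda_1\psi(u)$ is itself a power of the unknown; the resolution is that $f(u)\le c u^{p-1}$ lets one treat $\psi(u)$ in $L^{m}$ for every $m<\infty$ a posteriori, so Theorem~\ref{Linf}'s hypothesis $m>N/\alpha$ is met at the first iteration step and the argument proceeds verbatim, giving the estimate $\|u\|_\infty\le c(E(u)^{1/p}+\|u\|_{p-1})<\infty$. This completes the proof. $\hfill\square$
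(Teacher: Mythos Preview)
Your overall strategy---minimize $I=E/F$ via the direct method on the constraint $F=1$, derive the Euler--Lagrange equation, use $E(|u|)\le E(u)$ for the sign, and Moser iteration for boundedness---matches the paper's proof essentially step for step. There is, however, one concrete error: you write ``by homogeneity of the quotient we may normalize so that $F(v_n)=1$.'' The quotient $I(v)=E(v)/F(v)$ is \emph{not} scale-invariant unless $\Psi$ is a pure power; for general $\Psi\in\Gamma_{p,q}$ with $q<p$ one has $I(tv)\ne I(v)$, so rescaling a minimizing sequence over $W^{J,\Psi}_0(\Omega)\setminus\{0\}$ to land on $\{F=1\}$ may destroy the minimizing property. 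The paper avoids this false justification by directly taking a minimizing sequence inside $\mathcal M=\{F=1\}$ and asserting $\inf_{\mathcal M}I=\lambda_1$; that identification itself is stated rather than argued, but at least no incorrect reason is given. Your normalization step needs a different justification (or you should mirror the paper and minimize on $\mathcal M$ from the start).

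On boundedness your idea is right but the wording is tangled. You cannot invoke Theorem~\ref{Linf} ``at the first iteration step'': from $u\in L^{Np/(N-\alpha)}$ one only gets $\psi(u)\sim|u|^{p-1}\in L^{N/(N-\alpha)}$, which satisfies $m>N/\alpha$ only when $\alpha>N/2$. In general a preliminary bootstrap is required---either repeated use of Theorem~\ref{th-Lp2} or, as the paper indicates, a direct Moser iteration with test function $|u|^{(\beta-1)p}u$ via the Stroock--Varopoulos inequality~\eqref{SV-plap}---before the hypothesis of Theorem~\ref{Linf} is met. Your final displayed estimate $\|u\|_\infty\le c(E(u)^{1/p}+\|u\|_{p-1})$ is also not the conclusion of that theorem.
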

\begin{proof}
    The inequality \eqref{poincare1} immediately gives $\lambda_1>0$.
    Consider $\mathcal{M}=\{v\in W^{J,\Psi}_0(\Omega)\,:\,F(v)=1\}$. Let $\{v_n\}$ be a minimizing sequence for $I$ in $\mathcal{M}$, that is
    $$
    \lim_{n\to\infty}I(v_n)=\lambda_1=\inf_{v\in\mathcal{M}}I(v)>0.
    $$
    Then $\{v_n\}$ is bounded in $ W^{J,\Psi}_0(\Omega)$, so there exists a subsequence, still denoted by $\{v_n\}$, such that $v_n\rightharpoonup u$ in $ W^{J,\Psi}_0(\Omega)$. As usual, by Theorem~\ref{th-compact2} there exists a subsequence converging to $u$ in $L^\Psi(\Omega)$, so $F(u)=1$ and $u\in\mathcal{M}$. This gives
    $$
    \lambda_1\le I(u)=E(u)\leq\lim_{n\to\infty}E(v_n)=\lim_{n\to\infty}I(v_n)  =\lambda_1,
    $$
    and then $I(u)= \lambda_1$. The functionals $E$ and $T$ are differentiable, and so is $I$, and we have
    $$
    0=\langle I'(u),\varphi\rangle=\frac{1}{F(\varphi)}\left(\langle E'(u),\varphi\rangle-I(u)\langle F'(u),\varphi\rangle\right).
    $$
    Therefore
    $$
\mathcal{E}(u;\varphi)=\langle E'(u),\varphi\rangle=I(u)\langle F'(u),\varphi\rangle=\lambda_1\displaystyle\int_\Omega \psi(u)\varphi,
    $$
    for every $\varphi \in  W^{J,\Psi}_0(\Omega)$. The fact that the eigenfunction is nonnegative or nonpositive follows by~\eqref{kato-simple} which implies $I(\pm|u|)\le I(u)$. The boundedness of $u$ assuming~\eqref{alpha} is easily proved again by the Moser iterative scheme as performed in \cite{BrascoLindgrenParini}. The key point is the use of the Stroock-Varopoulos inequality \eqref{SV-plap} and condition~\eqref{psi-plap}, and finally apply Theorem~\ref{Linf}. See also~\cite{Franzina-Palatucci}.
\end{proof}
\qed

\section*{Acknowledgments}

Work supported by the Spanish project  MTM2014-53037-P.

\

\noindent{\bf Addresses:}

\noindent{\sc E. Correa: } Departamento de Matem\'{a}ticas, Universidad Carlos III de Madrid, 28911 Legan\'{e}s,
Spain. (e-mail: ernesto.correa@uc3m.es)

\noindent{\sc A. de Pablo: } Departamento de Matem\'{a}ticas, Universidad Carlos III de Madrid, 28911 Legan\'{e}s,
Spain. (e-mail: arturop@math.uc3m.es).

\begin{thebibliography}{B}


\bibitem{Almgren-Lieb}
\newblock F. J.~Almgren Jr. and E. H.~Lieb.
\newblock Symmetric decreasing rearrangement is sometimes continuous.
\newblock \emph{J. Amer. Math. Soc.}, \textbf{2} (1989), 683--773.

\bibitem{Ambrosetti-Rabinowitz}
\newblock A. Ambrosetti and P.~H. Rabinowitz.
\newblock Dual variational methods in critical point theory and
applications.
\newblock \emph{J. Funct. Anal.}, \textbf{14} (1973), 349--381.

\bibitem{BarriosPeralVita}
\newblock B. Barrios, I. Peral and S. Vita.
\newblock Some remarks about the summability of
nonlocal nonlinear problems.
\newblock  \emph{Adv. Nonlinear Anal.}, \textbf{4} (2015), 91--107.

\bibitem{BrandledePablo}
\newblock C.~Br\"andle and A.~de Pablo.
\newblock Nonlocal heat equations: regularizing effect, decay estimates and Nash inequalities.
\newblock  \emph{Comm. Pure Appl. Anal.},  \textbf{17} (2018), 1161--1178.


\bibitem{BrascoLindgrenParini}
\newblock L. Brasco, E. Lindgren and E. Parini.
\newblock The fractional Cheeger problem.
\newblock  \emph{Interfaces Free Bound.},  \textbf{16} (2014), 419--458.

\bibitem{Brezis}
\newblock H. Brezis.
\newblock  Functional analysis, Sobolev spaces and partial differential equations.
\newblock Universitext. Springer, New York, 2011.

\bibitem{BrezisOswald}
\newblock H. Brezis and L. Oswald.
\newblock Remarks on sublinear elliptic equations.
\newblock \emph{Nonlinear Anal. TMA}, \textbf{10} (1986), 55--64.

\bibitem{Caffarelli}
\newblock L.A. Caffarelli.
\newblock  Non-local diffusions, drifts and games,
\newblock in: Nonlinear Partial Differential Equations (Oslo 2010),
\newblock \emph{Abel Symp}, \textbf{7},
\newblock Springer-verlag, Berlin (2012), 37--52.

\bibitem{CoCo}
\newblock A. Córdoba and D. Córdoba.
\newblock  A pointwise estimate for fractionary derivatives with applications to partial
differential equations.
\newblock \emph{Proc. Natl. Acad. Sci. USA}, \textbf{100} (2003) 15316--15317.

\bibitem{CdP}
\newblock E. Correa and A. de Pablo.
\newblock  Nonlocal operators of order near zero.
\newblock \emph{J. Math. Anal. Appl.}, \textbf{461} (2018) 837--867.


\bibitem{hitch}
\newblock E. Di Nezza, G. Palatucci and E. Valdinoci.
\newblock Hitchhiker's guide to the fractional Sobolev spaces.
\newblock Bull. Sci. Math., \textbf{136} (2012),  521--573.

\bibitem{Iannizzotto-etal}
\newblock A. Iannizzotto, S. Liu, K. Perera and M. Squassina.
\newblock Existence result s for fractional $p$--Laplacian problems via Morse theory.
\newblock \emph{Adv. Calc. var.}, \textbf{9} (2016), 101--125.

\bibitem{Frank-Seiringer}
\newblock R. L.~Frank and R.~Seiringer.
\newblock Non-linear ground state representations and sharp Hardy inequalities.
\newblock \emph{J. Funct. Anal.}, \textbf{255} (2008), 3407--3430.

\bibitem{Franzina-Palatucci}
\newblock G. Franzina and G. Palatucci.
\newblock Fractional $p$--eigenvalues.
\newblock \emph{Riv. Mat. Univ. Parma}, \textbf{5} (2014), 315--328.

\bibitem{Friedrichs}
\newblock K. O. Friedrichs.
\newblock On Clarkson's inequalities.
\newblock \emph{Comm. Pure Appl. Math.}, \textbf{23} (1970) 603--607.


\bibitem{Gilbarg-Trudinger}
\newblock D. Gilbarg and N.S. Trudinger.
\newblock Elliptic partial differential equations of second order.
\newblock Springer-Verlag, Classics in Mathematics, Berlin, 2001.



\bibitem{Kato}
\newblock T. Kato.
\newblock Schrödinger operators with singular potentials.
\newblock \emph{Israel J. Math.}, \textbf{13} (1972) 135--148.


\bibitem{Lindgren-Lindqvist} E. Lindgren and P. Lindqvist.
\newblock Fractional eigenvalues.
\newblock \emph{Calc. Var.
Partial Differential Equations}, \textbf{49} (2014), 795--826.

\bibitem{Lindqvist} P. Lindqvist.
\newblock On the equation $\text{div}(\nabla u|^{p-2}\nabla u)+\lambda|u|^{p-2}u=0$.
\newblock \emph{Proc. Amer. Math. Soc.}, \textbf{1} (1990), 157--164.


\bibitem{Rao-Ren}
\newblock M.~M. Rao and Z.~D. Ren.
\newblock Applications of Orlicz spaces.
\newblock Monographs and Textbooks in Pure and Applied Mathematics, 250. Marcel Dekker, Inc., New York, 2002.

\bibitem{RosOtonSerra}
\newblock X. Ros-Oton and J. Serra.
\newblock The Pohozaev identity for the fractional Laplacian.
\newblock \emph{Arch. Ration. Mech. Anal.}, \textbf{213} (2014), 587--628.

\bibitem{Savin-Valdinoci}
\newblock O. Savin and E. Valdinoci.
\newblock Density estimates for a nonlocal
variational model via the Sobolev inequality.
\newblock \emph{SIAM J. Math. Anal.}, \textbf{43} (2011), 2675--2687.

\bibitem{Savin-Valdinoci2}
\newblock O. Savin and E. Valdinoci.
\newblock Density estimates for a variational
model driven by the Gagliardo norm.
\newblock \emph{J. Math. Pures Appl.}, \textbf{101} (2014), 1--26.

\bibitem{Varopoulos}
\newblock N.~T. Varopoulos.
\newblock Hardy-Littlewood theory for semigroups.
\newblock \emph{J. Funct. Anal.}, \textbf{63} (1985), 240--260.


\end{thebibliography}
\end{document}